 \titlespacing*{\section}{0pt}{14pt}{4pt}
 \titlespacing*{\subsection}{0pt}{8pt}{3pt}
\patchcmd{\ttlh@hang}{\parindent\z@}{\parindent\z@\leavevmode}{}{}
\patchcmd{\ttlh@hang}{\noindent}{}{}{}
\def\maketimestamp{\count255=\time
\divide\count255 by 60\relax
\edef\thetime{\the\count255:}%
\multiply\count255 by-60\relax
\advance\count255 by\time
\edef\thetime{\thetime\ifnum\count255<10 0\fi\the\count255}
\edef\thedate{\number\day-\ifcase\month\or Jan\or Feb\or Mar\or
             Apr\or May\or Jun\or Jul\or Aug\or Sep\or Oct\or
             Nov\or Dec\fi-\number\year}
\def\timstamp{\hbox to\hsize{\tt\hfil\thedate\hfil\thetime\hfil}}}
\numberwithin{equation}{section}  
\newtheorem{thm}{Theorem}[section]
\newtheorem{lem}[thm]{Lemma}
\newtheorem{prop}[thm]{Proposition}
\newtheorem{cor}[thm]{Corollary}
\theoremstyle{definition}
\newtheorem{defn}[thm]{Definition} 
\newtheorem{ex}{Example}
\theoremstyle{remark}
\newtheorem{rem}{Remark}
\DeclareMathOperator{\supp}{supp} %
\DeclareMathOperator*{\esssup}{ess\,sup} %
\DeclareMathOperator*{\essinf}{ess\,inf} %
\DeclareMathOperator{\exponential}{e}
\newcommand{\csi}[1][g]{\ensuremath{\setsmall{T_{c k} {#1}_j}_{k \in
      \Z, j \in J}}}
\newcommand{\gsi}[1][g]{\ensuremath{\setsmall{T_{c_j k} {#1}_j}_{k \in
      \Z, j \in J}}}
\newcommand{\gsiZ}[1][g]{\ensuremath{\setsmall{T_{c_j k} {#1}_j}_{k \in
      \Z, j \in \Z}}}
\newcommand{\wpj}[1][\psi]{\ensuremath{\setsmall{D_{a_j}T_{bk}E_{d_j}{#1}}_{k\in
      \Z, j\in J}}}
\newcommand{\siwp}[1][\psi]{\ensuremath{\setsmall{T_{bk}D_{a_j}E_{d_j}{#1}}_{k\in
      \Z, j\in J}}}
\newcommand{\Lloc}{L^1_{\mathrm{loc}}}
\newcommand{\myexp}[1]{\exponential^{#1}}
\newcommand{\sign}[1]{\mathrm{sgn}(#1)}
\newcommand{\tran}[1][k]{T_{#1}} 
\newcommand{\modu}[1][m]{E_{#1}} 
\newcommand{\eps}{\ensuremath{\varepsilon}}
\newcommand*{\numbersys}[1]{\ensuremath{\mathbb{#1}}}
\newcommand*{\C}{\numbersys{C}}
\newcommand*{\R}{\numbersys{R}}
\newcommand*{\Z}{\numbersys{Z}}
\newcommand*{\N}{\numbersys{N}}
\newcommand*{\cP}{\mathcal{P}}
\newcommand{\itvoc}[2]{\ensuremath{\left({#1},{#2}\right]}} 
\newcommand{\itvoo}[2]{\ensuremath{\left({#1},{#2}\right)}} %
\newcommand{\itvcc}[2]{\ensuremath{\left[{#1},{#2}\right]}} %
\newcommand{\itvco}[2]{\ensuremath{\left[{#1},{#2}\right)}} %
\newcommand{\itvccs}[2]{\ensuremath{\lbrack{#1},{#2}\rbrack}} %
\newcommand{\abs}[1]{\ensuremath{\left\lvert#1\right\rvert}}
\newcommand{\abssmall}[1]{\ensuremath{\lvert#1\rvert}}
\newcommand{\absbig}[1]{\ensuremath{\bigl\lvert#1\bigr\rvert}}
\newcommand{\norm}[2][]{\ensuremath{\left\lVert#2\right\rVert_{#1}}}
\newcommand{\normsmall}[2][]{\ensuremath{\lVert#2\rVert_{#1}}}
\newcommand{\innerprod}[3][]{\ensuremath{\left\langle #2,#3\right\rangle_{\! #1}}}
\newcommand{\set}[1]{\ensuremath{\left\lbrace{#1}\right\rbrace}}
\newcommand{\setprop}[2]{\ensuremath{\left\lbrace{#1} : {#2}\right\rbrace}}
\newcommand{\setsmall}[1]{\ensuremath{\lbrace{#1}\rbrace}}
\newcommand{\seqsmall}[1]{\ensuremath{\lbrace{#1}\rbrace}}
\newcommand{\setpropsmall}[2]{\ensuremath{\lbrace{#1} : {#2}\rbrace}}
\newcommand\cD{\mathcal{D}}
\newcommand\cE{\mathcal{E}}
\def\bpr{\begin{propp}}
\def\epr{\end{prop}}
\newcommand{\ltr}{ L^2(\mathbb R) }
\newcommand{\mn}{\mathbb N}
\newcommand{\mr}{\mathbb R}
\newcommand{\bbz}{\mathbb Z}
\newcommand{\mz}{\mathbb Z}
\newcommand{\mts}{ \{E_{mb}T_{na}g \}_{m,n \bbz}}
\newcommand{\aj}{\{a_j\}_{j\in\Z}}
\newcommand{\irw}{\{D_{a_j}T_{bk}\psi\}_{j,k\in\Z}}
\def\bqs{\begin{equation}}
\def\eqs{\tag*{$\square$}\end{equation}\par\bigskip}
\def\hp{\hat{\psi}}
\def\ga{\gamma}
\def\hp{\hat{\psi}}
\def\bop{\begin{op}\rm}
\def\eop{\end{op}}
\def\bes{\begin{eqnarray*}}
\def\ens{\end{eqnarray*}}
\def\bei{\begin{itemize}}
\def\eni{\end{itemize}}
\def\bt{\begin{thm}}
\def\et{\end{thm}}
\def\bc{\begin{cor}}
\def\ec{\end{cor}}
\def\bpr{\begin{prop}}
\def\epr{\end{prop}}
\def\bl{\begin{lemma}}
\def\el{\end{lemma}}
\def\bd{\begin{defn}}
\def\ed{\end{defn}}
\def\bex{\begin{ex}}
\def\enx{\end{ex}}
\def\bfi{\begin{fig}}
\def\efi{\end{fig}}
\def\hpi{\hat{\psi}}
\def\bbz{{\mathbb Z}^d}
\def\sumj{\sum_{j\in \Z}}
\def\sumk{\sum_{k\in \Z}}
\def\hp{\widehat{\psi}}
\def\bbz{\in\Z}
\def\L(f){\sum_{j\in J}\sumk\frac{1}{c_j}\int_{\Bbb R}\hat{f}(\gamma)\overline{\hat{f}(\gamma+c_j^{-1}k)\hat{\phi}(\gamma)}\hat{\phi}(\gamma+c_j^{-1}k)d\gamma}
\def\wps{\{D_{a^j}T_{bk}E_{d_m}\psi\}_{j\in J,m,k\bbz}}
\def\blfootnote{\xdef\@thefnmark{}\@footnotetext}
\def\subjclass{\xdef\@thefnmark{}\@footnotetext}
\long\def\symbolfootnote[#1]#2{\begingroup%
\def\thefootnote{\fnsymbol{footnote}}\footnote[#1]{#2}\endgroup}
  \renewenvironment{abstract}{%
      \titlepage
      \null\vfil
      \@beginparpenalty\@lowpenalty
      \begin{center}%
        \bfseries \abstractname
        \@endparpenalty\@M
      \end{center}}%
     {\par\vfil\null\endtitlepage}
  \renewenvironment{abstract}{%
      \if@twocolumn
        \section*{\abstractname}%
      \else
        \small
        \list{}{%
          \settowidth{\labelwidth}{\textbf{\abstractname:}}
          \setlength{\leftmargin}{50pt}
          \setlength{\rightmargin}{50pt}
          \setlength{\itemindent}{\labelwidth}
          \addtolength{\itemindent}{\labelsep}
        }
        \item[\textbf{\abstractname:}]

      \fi}
      {\if@twocolumn\else\endlist\fi}
\begin{document}

\title{Explicit constructions and properties of generalized shift-invariant systems in $L^2(\mathbb{R})$}

\date{\today}

 \author{Ole Christensen\footnote{Technical University of Denmark, Department of Applied Mathematics
     and Computer Science, Matematiktorvet 303B, 2800 Kgs.\ Lyngby, Denmark}, 
Marzieh Hasannasab$^\ast$,
Jakob Lemvig$^\ast$
}

 \blfootnote{2010 {\it Mathematics Subject Classification.} Primary 42C15, 43A70, Secondary: 43A32.}
 \blfootnote{{\it Key words and phrases.} Dual frames, frame, Gabor system, Generalized shift
   invariant system, LIC, local integrability condition, wave packets, wavelets}
\blfootnote{{\it E-mail addresses}: \url{ochr@dtu.dk} (Ole Christensen), \url{mhas@dtu.dk} (Marzieh Hasannasab), \url{jakle@dtu.dk} (Jakob Lemvig)}

\maketitle

\thispagestyle{plain}
\begin{abstract} Generalized shift-invariant (GSI) systems, originally introduced by Hern\'andez, Labate \& Weiss and Ron \& Shen, provide a common frame work for analysis of Gabor systems, wavelet systems,
wave packet systems, and other types of structured function systems. In this paper we analyze three important aspects of such systems. First, in contrast to the known cases of Gabor frames and wavelet frames, we show that for a GSI system forming a frame, the
Calder\'on sum is not necessarily bounded by the lower frame bound. We identify a technical condition implying that the Calder\'on sum is  bounded by the lower frame bound and show that under a weak assumption  the condition is equivalent with the local integrability condition introduced by Hern\'andez et.\ al.  Second, we provide explicit and general constructions of frames
and dual pairs of frames
having the GSI-structure. In particular, the setup applies to wave packet systems and in contrast to the
constructions in the literature, these constructions are not based on characteristic functions
in the Fourier domain. Third, our results provide insight into the local integrability condition
(LIC).
\end{abstract}

\section{Introduction}

Generalized shift-invariant systems provide a common framework for analysis of
a large class of  function systems in $\ltr.$ Defining the translation operators $T_c,  c\in \R$,
by $T_cf(x)=f(x-c)$, a \emph{generalized shift-invariant (GSI) system} has the
form   $\gsi$, where
$\set{c_j}_{j \in J}$ is a countable set in
$\R_+$ and $g_j \in L^2(\R).$ GSI systems were introduced by Hern\'andez, Labate
\& Weiss~\cite{MR1916862}, and  Ron \& Shen~\cite{MR2132766}.

In the analysis of a GSI system, the function $\sum_{j\in J}
c_j^{-1} | \hat{g}_j(\cdot) |^2$, which we will call the Calder\'on sum in analogue with the
standard terminology used in the special case of a wavelet
system, plays an important role. Intuitively, the Calder\'on sum measures the total energy
concentration of the generators $g_j$ in the frequency domain. Hence,
whenever a GSI system has the frame property, one would expect the
Calder\'on sum to be bounded from below since the GSI frame can reproduce
all frequencies in a stable way.
Indeed, whenever a Gabor frame or a wavelet frame
is
considered as a GSI system in the natural way (see the details below), it is known that the Calder\'on sum is bounded
above and below by the upper and lower frame bounds, respectively.
In
the general case of a GSI system the Calder\'on sum is known to be
bounded above by the upper frame bound.  In this paper we prove by
an example that the Calder\'on sum is not always bounded below by the
lower frame bound. On the other hand, we identify a technical
condition implying that the Calder\'on sum is  bounded by the lower
frame bound. Under
a weak assumption, this condition is proved to be equivalent with the
local integrability condition introduced by Hern\'andez et.\ al.~\cite{MR1916862}.

Our second main contribution is to provide  constructions of pairs of
dual frames having the GSI structure. The construction procedure
allows for smooth and well-localized generators, and it
unifies several known constructions of dual frames with Gabor,
wavelet, and so-called Fourier-like structure \cite{MR2481501,Lemvig2012,MR2407863,MR2587577,MR3061703}. Due to its generality the setup is technical, but nevertheless it is possible to extract attractive new constructions, as we will explain below.

We will apply our results for GSI systems on the important special
case of wave packet systems. We consider necessary and sufficient
conditions for frame properties of wave packet systems.  In
particular, by the just mentioned construction procedure, we obtain
dual pairs of wave packet frames that are not based on characteristic
functions in the Fourier domain.  Recall that a wave packet system is
a the collection of functions that arises by letting a class of
translation, modulation, and scaling operators act on a fixed function
$\psi\in \ltr$. The precise setup is as follows. Given $a\in \R$, we
define the modulation operator $(E_af)(x)=\myexp{2\pi iax}f(x)$, and
(for $a> 0$) the scaling operator $(D_af)(x)=
a^{-1/2} f(x/a)$; these operators are unitary on
$\ltr$. Let $b>0$ and $\{(a_j, d_j)\}_{j\in J}$ be a countable set in
``scale/frequency'' space $\R^+\times \, \R$. The
\emph{wave packet system} generated by a function $\psi\in \ltr$ is
the collection of functions $\wpj$.

The key feature of wave packet system is that it allows
 us to \emph{combine} the Gabor structure and the wavelet
structure into one system that yields a very flexible analysis of
signals. For the particular parameter choice $(a_j,d_j):=(a^j,1)$ for $j \in
J=\Z$ and some $a \neq 0$, the wave packet system $\wpj$ simply
becomes the wavelet system $\{D_{a^j}T_{kb}\psi\}_{j,k\bbz}$ generated
by the function $\psi\in \ltr$. On the other hand, for the choice
$(a_j,d_j):=(1,aj)$ for $j \in J=\Z$ and some $a> 0$, we recover the
system $\set{T_{bk} E_{aj} \psi}_{j,k\in \Z}$ which is unitarily
equivalent with the Gabor system $\set{E_{aj} T_{bk} \psi}_{j,k\in
  \Z}$. Hence, we can consider both wavelet and Gabor systems as
special cases of wave packet systems. Furthermore, other choices of the
parameters $\{(a_j, d_j)\}_{j\in J}$, which intuitively control
how the scale/frequency information of a signal is analyzed, combine
Gabor and wavelet structure. Finally, the
translations by $b\Z$ allow for time localization of the wave packet
atom.


The generality of GSI systems is known to lead to some technical
issues. Indeed, local integrability conditions play an important role
in the theory of GSI systems as a mean to control the interplay
between the translation lattices $c_j \Z$ and the generators $g_j$, $j
\in J$. Our third main contribution is new insight into the role of
local integrability conditions. In particular, we will see that local
integrability conditions also play an important role for wave packet
systems, and that it is important to distinguish between the so-called
local integrability condition (LIC) and the weaker $\alpha$-LIC. This
is in sharp contrast to the case of Gabor and wavelet systems in
$L^2(\R)$, where one largely can ignore local integrability
conditions.

The paper is organized as follows. In Section 2, we introduce the
theory of GSI systems and extend the well-known duality conditions to certain
subspaces of $\ltr.$ In Section \ref{sec:lower-bound-calderon} we discuss various
technical conditions under which the
Calder\'on sum for a GSI frame is  bounded below by the lower
frame bound; applications to wavelet systems and
 Gabor systems are considered in Section \ref{sec:special-cases-gsi}. In
Section \ref{sec:constr-dual-gsi-frames} we provide explicit constructions of dual GSI frames
for certain subspaces of $\ltr.$ The general version of the result is technical, but we are
nevertheless able to provide concrete realizations of the results. Finally,
Section \ref{sec:wave-packet-systems} applies the key results of the paper to wave packet systems.
In particular we show that a successful analysis of such systems must be based on the $\alpha$-LIC
rather than the LIC. Furthermore, we provide explicit constructions of dual pairs of
wave packet frames.

We end this introduction by putting our
work in a perspective with other known results. C\'ordoba and
Fefferman~\cite{MR507783} considered continuous wave packet transforms in
$L^2(\R^n)$ generated by the gaussian which is well localized in time
and frequency. The results in \cite{MR507783} yield \emph{approximate} reproducing
formulas. In \cite{MR2066831, MR2038268} the authors constructs
frequency localized wave packet systems associated with  exact
reproducing formulas in terms of Parseval frames. However, these generators
are poorly localized in time as the generators are characteristic
functions in the frequency space. In this work we construct wave
packet dual frames well localized in time and frequency.

For an introduction to frame theory we refer to the books
\cite{MR1946982,MR1843717,Daubechies:1992:TLW:130655}.

%


\section{Preliminary results on GSI systems}
\label{sec:gener-shift-invar}

To set the stage, we will recall and extend some of the most important
results on GSI systems. Let $J$ be a finite or a countable index set. As already mentioned in the introduction, analysis of GSI
systems  $\gsi$ cover several of the cases considered in the literature. In case $c_j=c >0$ for all $j \in J$, the system
$\{T_{c k}g_j\}_{k \in \Z, j \in J}$ is a shift invariant (SI) system;
if one further takes $g_j=E_{aj}g, j \in J:=\Z$ for some $a>0$ and $g \in L^2(\R)$,
we recover the Gabor case. The
wavelet system $\{D_{a^j}T_{bk}\psi\}_{j,k\bbz}$ with $a> 0$ and
$b>0$ can naturally be
represented as a GSI system via
\begin{equation}
\label{eq:0803c}
\{D_{a^j}T_{bk}\psi\}_{j,k\bbz}=\gsi \quad  \text{ with }
c_j=a^jb, \, g_j=D_{a^j}\psi, \text{ for } j \in J=\Z.
\end{equation}
Note that this representation is non-unique, hence unless it is clear from the context, we will
always specify the choice of $c_j$ and $g_j$, $j \in J$.

The upper bound of the Calder\'on sum for GSI systems obtained by Hern\'andez, Labate,
and Weiss~\cite{MR1916862} only relies on the Bessel property. The precise
statement is as follows.
\begin{thm}[\!\!\cite{MR1916862}]
\label{thm:HLW-Bessel-bound}
Suppose the GSI system $\gsi$ is a Bessel sequence with bound $B$. Then
  \begin{equation} \label{eq:5301a} \sum_{j\in
      J}\frac{1}{c_j}|\hat{g}_j(\gamma)|^2 \le B \qquad \text{for a.e. }
    \gamma\in \R.
  \end{equation}
\end{thm}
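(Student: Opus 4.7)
The plan is a standard Lebesgue-point argument: test the Bessel inequality against functions $f\in\ltr$ whose Fourier transforms are indicators of small intervals centred at a chosen point, and then shrink the interval. Since $J$ is countable, I can fix $\gamma_0 \in \R$ that is simultaneously a Lebesgue point of $|\hat{g}_j|^{2}$ for every $j \in J$; the set of such $\gamma_0$ has full measure in $\R$, so it suffices to establish \eqref{eq:5301a} at every such $\gamma_0$.

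For $\eps>0$ set $\hat{f} := \charfct{B(\gamma_0,\eps)}$, so $\norm{f}^{2}=2\eps$. The central identity, obtained from Plancherel together with Parseval's identity on $L^{2}([0,1/c_j])$ applied to the $(1/c_j)$-periodization of $\hat{f}\,\overline{\hat{g}_j}$, reads
\begin{equation*}
\sum_{k\in\Z}|\inner{f}{T_{c_j k}g_j}|^{2} \;=\; \frac{1}{c_j}\int_{0}^{1/c_j}\Big|\sum_{m\in\Z}\hat{f}(\gamma+m/c_j)\,\overline{\hat{g}_j(\gamma+m/c_j)}\Big|^{2}\,d\gamma .
\end{equation*}
For those $j$ with $2 c_j\eps<1$, the supports $B(\gamma_0-m/c_j,\eps)$ of the translates $\hat{f}(\cdot+m/c_j)$ are pairwise disjoint for distinct $m\in\Z$; hence the square modulus splits across the $m$-sum and the right-hand side collapses to $c_j^{-1}\int_\R|\hat{f}(\gamma)|^{2}|\hat{g}_j(\gamma)|^{2}\,d\gamma$.

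Summing this equality over all $j\in J$ with $c_j<1/(2\eps)$ and invoking the Bessel bound $B$ for $f$ gives
\begin{equation*}
\sum_{j\in J,\; c_j<1/(2\eps)}\frac{1}{c_j}\int_{B(\gamma_0,\eps)}|\hat{g}_j(\gamma)|^{2}\,d\gamma \;\le\; B\cdot 2\eps .
\end{equation*}
Dividing by $2\eps$ and letting $\eps\to 0$, Lebesgue differentiation converts each inner average into $|\hat{g}_j(\gamma_0)|^{2}$, while every fixed $j\in J$ eventually satisfies $c_j<1/(2\eps)$ because $c_j$ is finite. Fatou's lemma applied to the counting measure on $J$ then produces $\sum_{j\in J} c_j^{-1}|\hat{g}_j(\gamma_0)|^{2}\le B$, which is \eqref{eq:5301a}. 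The only step requiring real care is the disjointness argument that collapses the periodization integral into a single pointwise integral over $\R$; the subsequent passage to the limit via Lebesgue differentiation and Fatou is routine.
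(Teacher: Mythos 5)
Your argument is correct, and no gap appears: the periodization/Parseval identity is stated with the right constant, the disjointness condition $2c_j\eps<1$ does collapse the periodized square to $c_j^{-1}\int|\hat f|^2|\hat g_j|^2$, discarding the remaining $j$'s is legitimate by nonnegativity, and the final passage via simultaneous Lebesgue points (possible since $J$ is countable and each $|\hat g_j|^2\in L^1(\R)$) together with Fatou for counting measure on $J$ gives \eqref{eq:5301a}. Note that the paper itself offers no proof of this statement: it is quoted from \cite{MR1916862}, and your argument is essentially the standard one found there; it is also the mirror image of the technique this paper uses for the lower bound in Theorem~\ref{thm:loc-lowerbound}, where indicators of small intervals around Lebesgue points are used as test functions, except that for the upper bound no local integrability hypothesis is needed because the discarded off-diagonal terms have a favorable sign.
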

Here, for $f \in L^1(\R)$, the Fourier transform is defined as
\[
\hat f(\gamma) = \int_{\R} f(x)\myexp{-2 \pi i
  \gamma x} \, \mathrm{d}x
  \]
with the usual extension to $L^2(\R)$.

In the special cases where $\gsi$ is a Gabor frame or a wavelet frame
with lower frame bound $A$,
it is known that $A$ is also a lower bound on the sum in~\eqref{eq:5301a}. For instance,
for a wavelet frame $\{D_{a^j}T_{bk}\psi\}_{j,k\bbz}= \{T_{a^jbk}D_{a^j}\psi\}_{j,k\bbz}$
with bounds $A$ and $B$, Chui and Shi~\cite{MR1199539} proved that
\begin{equation}
\label{eq:wavelet-calderon-bounds}
A\leq \sum_{j\in J}\frac{1}{b}|\hat{\psi}(a^j\gamma)|^2\leq B \qquad
\text{for a.e. }
    \gamma\in \R.
\end{equation}

\subsection{Frame theory for GSI systems}
\label{sec:frame-theory-gsi}

We will consider GSI frames $\gsi$ for certain closed subspaces of
$L^2(\R)$. To this end, for a measurable subset $S$ of $\R$ we define
\[
 \check{L}^2(S):=\setprop{f \in L^2(\R)}{\supp\hat{f} \subset S}.
\]
The set $S$ is usually some collection of frequency bands that are of
interest. In case one is not interested in subspaces of $L^2(\R)$,
simply set $S=\R$. If $S$ is chosen to be a finite, symmetric
interval around the origin, we obtain the important special case of
Paley-Wiener spaces. We will always assume that the generators $g_j$ satisfy that
$\supp\hat{g}_j \subset S$ for every $j \in J$. Note that this
guarantees that the GSI system $\gsi$ belongs to $\check{L}^2(S)$.

In order to check that a GSI system is a frame for $\check{L}^2(S)$
it is enough to check the
frame condition 
on a dense set in
$\check{L}^2(S)$. Depending on the given GSI system, we will fix a
measurable set $E\subset S$ whose closure has measure zero and
define the subspace $\cD_E$ by
\[
\cD_E:=\left\{f\in L^2(\R): \supp\hat{f} \subset S\setminus E \text{ is compact and }
  \hat{f} \in L^\infty(\R)\right\}.
\]
For example, for a Gabor system $\mts$ we can take $E$ to be the empty set, and for a wavelet system
 $\{D_{a^j}T_{bk}\psi\}_{j,k\bbz}$ we take $E=\{0\}$.

In order to consider frame properties for GSI systems we will need a local integrability condition,
introduced in \cite{MR1916862} and generalized in \cite{JakobsenReproducing2014}.

\begin{defn}
  Consider a GSI system $\gsi$ and let $E \in \mathcal{E}$, where
  $\mathcal{E}$ denotes the set of measurable subsets of $S \subset \R$ whose closure has
  measure zero.
  \begin{enumerate}[(i)]
  \item If
    \begin{equation}\label{eq:LIC}
    L(f):= \sum_{j\in J}\sum_{m\bbz}\frac{1}{c_j}\int_{\supp\hat{f}}|
      \hat{f}(\gamma+c_j^{-1}m) \hat{g}_j(\gamma)|^2\,\mathrm{d}\gamma<\infty
    \end{equation}
    for all $f\in {\cal D}_E$, we say that $\gsi$ satisfies the local integrability condition (LIC) with respect to the set $E$.
  \item $\gsi$ and $\gsi[h]$ satisfy the
     \emph{dual $\alpha$-LIC} with respect to $E$ if
    \begin{equation}\label{eq:dual-alpha-LIC}
    L'(f):=  \sum_{j\in J}\sum_{m\bbz}\frac{1}{c_j}\int_{\Bbb R}| \hat{f}(\gamma)\overline{\hat{f}(\gamma+c_j^{-1}m)
        \hat{g}_j(\gamma)} \hat{h}_j(\gamma+c_j^{-1}m)|\,\mathrm{d}\gamma<\infty
    \end{equation}
    for all $f\in{\cal D}_E$.  We say that $\gsi$ satisfies the
    $\alpha$-LIC with respect to $E$, if~\eqref{eq:dual-alpha-LIC}
    holds with $g_j=h_j$, $j \in J$.
  \end{enumerate}
\end{defn}
By an application of the Cauchy-Schwarz inequality, we see that if the
local integrability condition holds, then the $\alpha$-local
integrability condition also holds. 
Clearly, if a  local
integrability condition holds with respect to $E = \emptyset$, it
holds  with respect to any $E \in \cE$. 

In \cite{MR1916862} it is shown that any Gabor system satisfies the LIC for
$E=\emptyset$. To arrive at the same conclusion for SI
systems,  it suffices to assume that the system is a Bessel
sequence, see \cite{JakobsenReproducing2014}. In fact, it is not difficult to show the
following more general result.
\begin{lem}
 Consider the SI system $\csi$, and let $E \in \cE$. Then $\csi$
 satisfies the LIC with respect to $E$ if and only if the Calder\'on sum
 for $\csi$ is locally integrable on $\R\setminus E$, i.e.,
 \begin{equation}
\sum_{j\in J} \frac{1}{c}\abs{\hat{g}_j(\cdot)}^2 \in
\Lloc(\R\setminus E). \label{eq:SI-system-L1-loc}
\end{equation}
\end{lem}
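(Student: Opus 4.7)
The two implications are essentially dual via Fubini--Tonelli, and neither requires input beyond the definitions and the assumption that $\supp\hat g_j\subset S$ for every $j\in J$.

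For ``LIC $\Rightarrow$ local integrability'' I would fix an arbitrary compact $K\subset S\setminus E$ and choose $f\in L^2(\R)$ with $\hat f=\chi_K$. Then $f\in\cD_E$ since $\chi_K$ is bounded and compactly supported in $S\setminus E$. In the nonnegative series defining $L(f)$ in \eqref{eq:LIC}, the single term $m=0$ equals $\int_K c^{-1}\sum_{j\in J}\abs{\hat g_j(\gamma)}^2\,d\gamma$, which is therefore bounded above by $L(f)<\infty$. Varying $K$ shows $\sum_j c^{-1}\abs{\hat g_j}^2\in\Lloc(S\setminus E)$; since each $\hat g_j$ vanishes outside $S$, the Calder\'on sum vanishes on $\R\setminus S$ as well, and hence lies in $\Lloc(\R\setminus E)$.

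For the converse, fix $f\in\cD_E$. Applying Tonelli to interchange the order of the nonnegative sums and the integral in \eqref{eq:LIC} yields
\[
L(f) \;=\; \int_{\supp\hat f} \frac{1}{c}\biggl(\sum_{j\in J}\abs{\hat g_j(\gamma)}^2\biggr)\biggl(\sum_{m\in\Z}\abs{\hat f(\gamma+c^{-1}m)}^2\biggr)\, d\gamma.
\]
Since $\hat f\in L^\infty(\R)$ has compact support, for every $\gamma\in\supp\hat f$ the inner periodization has only finitely many nonzero terms, with the number of such terms bounded uniformly by a constant $N_f$ that depends only on the diameter of $\supp\hat f$ and on $c$. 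Consequently $L(f)\le c^{-1}N_f\|\hat f\|_\infty^{2}\int_{\supp\hat f}\sum_{j\in J}\abs{\hat g_j(\gamma)}^2\,d\gamma<\infty$, using that $\supp\hat f$ is a compact subset of $\R\setminus E$ and that the Calder\'on sum lies in $\Lloc(\R\setminus E)$.

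The only subtle point is the Tonelli interchange together with the uniform bound on the periodization, both of which are routine. A minor bookkeeping issue is reconciling the ambient space $\R\setminus E$ appearing in the statement with the effective support $S\setminus E$ on which all of the nontrivial action happens; this is handled in the $\Rightarrow$ direction by the observation that the Calder\'on sum vanishes off $S$.
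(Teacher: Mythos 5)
Your argument is essentially the one the paper has in mind: the lemma itself is left unproved there (``it is not difficult to show''), and the closest written argument, the proof of Proposition~\ref{LICM}, uses exactly your two ingredients --- testing the LIC with $\hat f=\chi_K$ for compact $K\subset S\setminus E$ in one direction, and Tonelli together with the observation that only $O(c\cdot\mathrm{diam}(\supp\hat f))$ values of $m$ contribute in the other. In the SI case your forward direction is in fact a bit cleaner than the paper's general GSI version: with a single $c$ the $m=0$ term alone already gives $\frac1c\int_K\sum_{j}\abssmall{\hat g_j}^2\le L(f)$, whereas Proposition~\ref{LICM} needs the counting argument with $[c_j(d-c)]$ translated copies of $K$ to compare the unweighted and $\frac1{c_j}$-weighted sums.

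The one sentence I would not let stand is the upgrade from local integrability on $S\setminus E$ to $\Lloc(\R\setminus E)$ ``because the Calder\'on sum vanishes off $S$''. When $S\subsetneq\R$ this deduction fails: a compact subset of $\R\setminus E$ may meet the boundary of $S$, and the Calder\'on sum can blow up there from inside $S$ even though the LIC holds, since the LIC only tests functions whose Fourier support is a \emph{compact} subset of $S\setminus E$. For instance, with $S=\itvoo{0}{\infty}$, $E=\emptyset$, $c=1$ and $\hat g_j=2^{j/2}\chi_{\itvoc{2^{-j}}{2^{-j+1}}}$ for $j\in\N$, the LIC holds, yet the Calder\'on sum behaves like $\gamma^{-1}$ near the origin and is not integrable over $[0,1]$. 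This is really a looseness in the paper's formulation rather than in your core argument --- the lemma is used with $S=\R$ (SI and Gabor systems in $L^2(\R)$), where $S\setminus E=\R\setminus E$ and your proof is complete --- but as written that sentence is not a valid step; one should either assume $S=\R$ or phrase the conclusion as integrability over compact subsets of $S\setminus E$.
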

Of course, one can leave out the factor $\frac{1}{c}$ in the
Calder\'on sum in \eqref{eq:SI-system-L1-loc}. Note that if $\csi$ is
a Bessel sequence, then, by Theorem~\ref{thm:HLW-Bessel-bound}, the
Calder\'on sum satisfies \eqref{eq:SI-system-L1-loc} for any $E \in \cE$.
Similarly, it was shown in \cite{MR2746669} that
a wavelet system $\{D_{a^j}T_{kb}\psi\}_{j,k\bbz}$ satisfies the LIC
with respect to $E=\set{0}$ if and only if
\begin{equation*}
  \sum_{j\bbz} | \widehat{\psi}(a^j \cdot )|^2 \in \Lloc(\R\setminus\{0\}).
\end{equation*}

Hern\'andez, Labate and Weiss~\cite{MR1916862} characterized duality for two
GSI systems satisfying the LIC. In \cite{JakobsenReproducing2014} Jakobsen and Lemvig
generalized this to not necessarily discrete GSI systems defined on a locally
compact abelian group and satisfying
the weaker $\alpha$-LIC. The following  generalization  to discrete GSI system in
$\check{L}^2(S)$ follows the original proofs closely, so we only sketch the proof.

\begin{thm} 
  \label{thm:HLW}
  Let $S \subset \R$. Suppose that $\gsi$ and $\gsi[h]$ are Bessel
  sequences in $\check{L}^2(S)$ satisfying the dual $\alpha$-LIC for
  some $E\in \cE$. Then $\gsi$ and $\gsi[h]$ are dual frames for $\check{L}^2(S)$ if and only if
  \begin{equation}
  \label{eq:char-eqns-dual}
 \sum_{j\in J : \alpha \in c_j^{-1} \Z} \frac1{c_j}\,
 \overline{\hat{g}_j(\gamma)} \, \hat{h}_j(\gamma+ \alpha)=
    \delta_{\alpha, 0}\, \chi_{S}(\gamma) \qquad \text{a.e.} \, \gamma\in \R
  \end{equation}
  for all $\alpha \in \bigcup\limits_{j \in J} c_j^{-1}\Z$.
\end{thm}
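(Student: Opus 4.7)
The plan is to sketch the proof by a standard fiberization/periodization argument, adapted to the subspace $\check{L}^2(S)$. First I would reduce the biorthogonality condition for dual frames to the identity
\[
 \langle f, g\rangle = \sum_{j\in J}\sum_{k\in\Z} \langle f, T_{c_j k} g_j\rangle\, \langle T_{c_j k} h_j, g\rangle
\]
for $f,g$ in the dense subspace $\cD_E$ of $\check{L}^2(S)$. By Parseval, each inner product $\langle f, T_{c_j k} g_j\rangle$ equals the $k$-th Fourier coefficient (with respect to the period $1/c_j$) of the $c_j$-periodization of $\gamma \mapsto \hat{f}(\gamma)\overline{\hat{g}_j(\gamma)}$. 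Multiplying and using Parseval on $L^2(\R/c_j^{-1}\Z)$ rewrites the inner sum over $k$ as
\[
 \sum_{k\in\Z} \langle f, T_{c_j k} g_j\rangle\, \langle T_{c_j k} h_j, g\rangle
 = \frac{1}{c_j}\sum_{m\in\Z}\int_\R \hat{f}(\gamma)\,\overline{\hat{g}_j(\gamma)}\,\hat{h}_j(\gamma + c_j^{-1}m)\,\overline{\hat{g}(\gamma+c_j^{-1}m)}\,d\gamma .
\]

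The critical step is to interchange the triple sum over $j, m, k$ with the integral and then regroup the $(j,m)$-terms by the common translation parameter $\alpha = c_j^{-1}m$. This is exactly where the dual $\alpha$-LIC of \eqref{eq:dual-alpha-LIC} is used: it guarantees absolute integrability of the sum (for $f=g$ by Cauchy--Schwarz, and for general $f,g \in \cD_E$ by polarization), so Fubini applies and the rearrangement is legitimate. Setting
\[
 t_\alpha(\gamma) := \sum_{j\in J\,:\, \alpha \in c_j^{-1}\Z} \frac{1}{c_j}\,\overline{\hat{g}_j(\gamma)}\,\hat{h}_j(\gamma+\alpha),
\]
the full double sum becomes
\[
 \sum_{\alpha \in \bigcup_j c_j^{-1}\Z} \int_\R \hat{f}(\gamma)\,\overline{\hat{g}(\gamma+\alpha)}\,t_\alpha(\gamma)\,d\gamma.
\]

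For the ``if'' direction, the hypothesis \eqref{eq:char-eqns-dual} says $t_\alpha \equiv 0$ for $\alpha\neq 0$ and $t_0 = \chi_S$ a.e., so all $\alpha\neq 0$ contributions vanish and the $\alpha=0$ term reduces to $\int_S \hat{f}(\gamma)\overline{\hat{g}(\gamma)}\,d\gamma = \langle f,g\rangle$, since $\hat{f},\hat{g}$ are supported in $S$; this establishes the dual frame identity on the dense set $\cD_E$ and hence on $\check{L}^2(S)$ by the Bessel bounds. For the ``only if'' direction, I would isolate individual $t_\alpha$: choose $\hat{f}$ supported in a small interval $I\subset S\setminus E$ of positive measure and $\hat{g}$ supported in $I-\alpha$ (nonempty intersection with $S$ can always be arranged except on a null set). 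Because the supports of $\hat{f}(\cdot)$ and $\hat{f}(\cdot+\alpha')$ are then disjoint for $\alpha'\neq \alpha$, only the $\alpha$-term survives and one concludes $t_\alpha=0$ a.e.\ on $I$ for each fixed $\alpha\neq 0$; varying $I$ and exhausting $S\setminus E$ (which is conull in $S$ by the definition of $\cE$) gives $t_\alpha\equiv 0$ a.e.\ on $S$. Finally the $\alpha=0$ equation $\int \hat{f}\overline{\hat{g}}(t_0-\chi_S)\,d\gamma = 0$ for all $f,g\in\cD_E$ forces $t_0 = \chi_S$ a.e.\ by a Lebesgue differentiation argument.

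The main obstacle is the interchange step, which is exactly the content of the $\alpha$-LIC; the other novel point compared to \cite{MR1916862,JakobsenReproducing2014} is merely the bookkeeping of the subspace $\check{L}^2(S)$, which surfaces as the $\chi_S$ factor in the $\alpha=0$ equation because $\hat{f},\hat{g}$ are forced to vanish outside $S$. All other manipulations follow the original arguments verbatim, which is why only a sketch is warranted.
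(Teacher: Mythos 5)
The ``if'' direction of your sketch is fine and essentially the paper's argument (once $t_\alpha\equiv\delta_{\alpha,0}\chi_S$ is plugged in, everything collapses to $\norm{f}^2$ on the dense set $\cD_E$, and the Bessel bounds finish it). The genuine gap is in your ``only if'' direction, at the step ``only the $\alpha$-term survives.'' The set of translation frequencies $\bigcup_{j\in J}c_j^{-1}\Z$ is in general \emph{not} separated --- for a wavelet-type GSI system with $c_j=a^jb$ it is dense in $\R$ --- so for a fixed $\alpha\neq 0$ and any interval $I$ of positive length there are infinitely many $\alpha'\neq\alpha$ with $\abs{\alpha'-\alpha}<\abs{I}$, and for these the supports of $\hat{f}$ (in $I$) and $\hat{g}(\cdot+\alpha')$ (with $\supp\hat{g}\subset I+\alpha$) are \emph{not} disjoint. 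Hence shrinking $I$ never isolates the single term $\int\hat{f}(\gamma)\overline{\hat{g}(\gamma+\alpha)}t_\alpha(\gamma)\,\mathrm{d}\gamma$; the nearby $\alpha'$-terms contribute and there is no obvious way to show their total is negligible compared with the term you want. This separation argument is exactly the one that works for shift-invariant systems (where the frequencies lie in a single lattice $c^{-1}\Z$) but breaks down for GSI systems; this failure is the reason the almost-periodicity machinery was introduced in \cite{MR1916862}.

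The paper circumvents this by not fixing the translation variable: for $f\in\cD_E$ it considers $w_f(x)=\sum_{j\in J}\sum_{k\in\Z}\innerprod{\tran[x]f}{\tran[c_jk]g_j}\innerprod{\tran[c_jk]h_j}{\tran[x]f}$ and invokes \cite[Proposition 9.4]{MR1916862} (valid under the dual $\alpha$-LIC) to conclude that $w_f$ is a continuous almost periodic function equal to its absolutely convergent generalized Fourier series $\sum_\alpha d_\alpha\E{2\pi i\alpha x}$ with $d_\alpha=\int\hat{f}(\gamma)\overline{\hat{f}(\gamma+\alpha)}t_\alpha(\gamma)\,\mathrm{d}\gamma$. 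Duality forces $w_f\equiv\norm{f}^2$, and the \emph{uniqueness theorem for Fourier coefficients of almost periodic functions} --- this is the idea your sketch is missing --- yields $d_0=\norm{f}^2$ and $d_\alpha=0$ for every $\alpha\neq 0$, thereby separating the frequencies even when they are dense. Only then are special test functions used (with $\hat{f}=1$ on an interval $\itvcc{c_j^{-1}\ell}{c_j^{-1}(\ell+1)}\cap S$ and $\hat{f}=t_\alpha$ on its $\alpha$-translate, which is where boundedness of $t_\alpha$ via the Bessel bounds enters) to force $t_\alpha=0$ a.e.\ on $S$, while $t_\alpha=0$ off $S$ follows from $\supp\hat{g}_j\subset S$. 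A secondary, fixable imprecision in your sketch: the dual $\alpha$-LIC is a condition on a single $f$, and since absolute values do not polarize, the bilinear interchange for a pair $(f,g)$ should instead be justified by majorizing with the single function $h\in\cD_E$ defined by $\hat{h}=\abssmall{\hat f}+\abssmall{\hat g}$; the paper sidesteps this entirely by working with the quadratic form $w_f$ only.
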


\begin{proof}
  For simplicity assume that $E=\emptyset$; the case of general $E$
  only requires few modifications  of the following proof. For $f \in
  \cD_E$ define the function $w_f:\R \to \C$ by
  \begin{equation}
    \label{eq:w-def-GSI}
   w_f(x)= \sum_{j \in J} \sum_{k \in \Z}
   \innerprod{\tran[x]f}{\tran[c_jk]g_j}  \innerprod{\tran[c_jk]h_j}{\tran[x]f}.
 \end{equation}
By \cite[Proposition 9.4]{MR1916862}  (the given proof also hold with the $\alpha$-LIC replacing the
LIC) we know that $w_f$
is a continuous, almost periodic function that coincides pointwise
with its absolutely
convergent Fourier series
\begin{equation}
  \label{eq:w-FS-GSI}
    w_f(x)= \sum_{\alpha \in \bigcup\limits_{j\in J} c_j^{-1}\Z} d_\alpha \,
   \mathrm{e}^{2 \pi i \alpha x},
\end{equation}
where
\begin{equation}
  \label{eq:hat-w-GSI}
  d_\alpha = \int_{\R} \hat{f}(\gamma) \overline{\hat{f}(\gamma+\alpha)} \, t_\alpha(\gamma) d\gamma.
\end{equation}
and $t_\alpha(\gamma)$ denotes the left hand side of
\eqref{eq:char-eqns-dual}.

Assume that \eqref{eq:char-eqns-dual} holds. Inserting
$t_\alpha(\gamma)=\delta_{\alpha, 0}\, \chi_{S}(\gamma)$ into
\eqref{eq:w-FS-GSI} for $x=0$ yields
\[
 \sum_{j \in J} \sum_{k \in \Z}
   \innerprod{f}{\tran[c_jk]g_j}  \innerprod{\tran[c_jk]h_j}{f}=w_f(0) =
   \int_S \absbig{\hat{f}(\gamma)}^2 d\gamma = \norm{f}^2.
\]
By a standard density argument, this completes the proof of the
``if''-direction.

Assume now that $\gsi$ and $\gsi[h]$ are dual frames
for $\check{L}^2(S)$. Then $w_f(x)=\norm{f}^2$ for all $f\in \cD_E$ and all $x\in \mr.$ By uniqueness of
Fourier coefficients of almost periodic functions, this only happens
if, for $\alpha \in \bigcup\limits_{j\in J} c_j^{-1}\Z$,
\begin{equation}
 d_0 = \norm{f}^2 \qquad \text{and} \qquad  d_\alpha  = 0 \quad
 \text{for  $\alpha \neq 0$}.\label{eq:42}
\end{equation}
Since $\cD_E$ is dense in $\check{L}^2(S)$, it follows from $d_0 =
\norm{f}^2 $ that $t_0(\gamma)=1$ for a.e. $\gamma \in S$.

Assume that $\alpha = c_j^{-1}k$ for some $j \in J$ and $k \in
\Z\setminus \{0\}$. For each $\ell \in \Z$, take
\begin{equation*}
  \hat{f}(\gamma) =
  \begin{cases}
     1          & \text{for } \gamma \in
       \itvcc{c_j^{-1}\ell}{c_j^{-1}(\ell+1)} \cap S, \\
     t_\alpha(\gamma) & \text{for } \gamma \in
       \itvcc{c_j^{-1}\ell-\alpha}{c_j^{-1}(\ell+1)-\alpha} \cap S, \\
     0          & \text{otherwise.}
   \end{cases}
\end{equation*}
Then $f \in \cD_E$ and
\begin{equation*}
  0=\int_{\R} \hat{f}(\gamma) \overline{\hat{f}(\gamma+\alpha)} \, t_\alpha(\gamma)
  \,d\gamma = \int_{\itvcc{c_j^{-1}\ell}{c_j^{-1}(\ell+1)} \cap S} \abs{t_\alpha(\gamma)}^2
  d\gamma.
\end{equation*}
Since $\ell \in \Z$ was arbitrarily chosen, we deduce
that $t_\alpha(\gamma)$ vanishes almost everywhere for $\gamma \in S$.

For a.e.\ $\gamma \notin S$ the assumption
$\supp \hat{g}_j \subset S$ implies that $t_\alpha(\gamma)=0$ for any $\alpha$. Summarizing,
we have shown that $t_\alpha(\gamma)=\delta_{\alpha, 0}\,
\chi_{S}(\gamma)$ for a.e. $\gamma \in \R$.
\end{proof}
In the characterization of tight frames, we can leave out the Bessel condition.
\begin{thm} 
  \label{thm:HLW-tight}
  Let $S \subset \R$ and $A>0$. Suppose that the GSI system $\gsi$
  satisfies the $\alpha$-LIC condition for some $E \in \cE$. Then $\gsi$ is a tight frame for $\check{L}^2(S)$ with frame bound $A$ if and only if
  \begin{equation*}
\sum_{j\in J : \alpha \in c_j^{-1} \Z} \frac1{c_j}\,
\overline{\hat{g}_j(\gamma)} \, \hat{g}_j(\gamma+ \alpha)=
    A\,\delta_{\alpha, 0}\, \chi_{S}(\gamma) \qquad \text{a.e.} \, \gamma
    \in \R
  \end{equation*}
  for all $\alpha \in \bigcup\limits_{j \in J} c_j^{-1}\Z$.
\end{thm}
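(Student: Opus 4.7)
The strategy is to reduce to the duality characterization Theorem \ref{thm:HLW}, exploiting the fact that the Bessel assumption there is used only peripherally: the pointwise Fourier series analysis of the auxiliary function $w_f(x) = \sum_{j,k} |\langle T_x f, T_{c_j k} g_j\rangle|^2$ is governed entirely by the $\alpha$-LIC, while the Bessel property appears only as a convenient sufficient ingredient to wrap everything up.

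For necessity, I would observe that a tight frame with bound $A$ is automatically Bessel with bound $A$, and its canonical dual is $\{A^{-1} T_{c_j k} g_j\}$, itself Bessel with bound $A^{-1}$. The pair $(\gsi, \{A^{-1} T_{c_j k} g_j\})$ satisfies the dual $\alpha$-LIC, since the defining integral $L'(f)$ differs from the $\alpha$-LIC functional for $\gsi$ only by the scalar factor $A^{-1}$. Applying Theorem \ref{thm:HLW} to this pair and multiplying the resulting duality equation through by $A$ produces the claimed identity.

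For sufficiency, I would fix $f \in \cD_E$ and argue as in the proof of Theorem \ref{thm:HLW} with $h_j = g_j$: the $\alpha$-LIC supplies the absolute convergence needed so that $w_f$ admits the pointwise representation
\[
w_f(x) = \sum_{\alpha \in \bigcup\limits_{j\in J} c_j^{-1}\Z} d_\alpha \, e^{2 \pi i \alpha x}, \qquad d_\alpha = \int_\R \hat f(\gamma) \overline{\hat f(\gamma + \alpha)} \, t_\alpha(\gamma) \, d\gamma,
\]
where $t_\alpha(\gamma)$ denotes the left hand side of the claimed identity. Substituting the assumption $t_\alpha = A \delta_{\alpha, 0} \chi_S$ kills all $\alpha \neq 0$ contributions, and evaluation at $x = 0$ (using $\supp \hat f \subset S$) yields the tight frame equation $\sum_{j,k} |\langle f, T_{c_j k} g_j\rangle|^2 = A\|f\|^2$ on $\cD_E$. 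To promote this identity to all of $\check{L}^2(S)$, I would approximate $f \in \check{L}^2(S)$ by a sequence $\{f_n\} \subset \cD_E$: Fatou's lemma applied termwise to $|\langle f, T_{c_j k} g_j\rangle|^2 = \lim_n |\langle f_n, T_{c_j k} g_j\rangle|^2$ delivers the Bessel bound $A$ on the entire subspace, after which continuity of the analysis operator upgrades the Parseval identity from the dense subset to the full space. The main obstacle is justifying the pointwise Fourier series identity for $w_f$ under the $\alpha$-LIC alone, without Bessel input; this amounts to verifying, by inspection of the proof of Proposition 9.4 in \cite{MR1916862} (with the $\alpha$-LIC variant noted in the proof of Theorem \ref{thm:HLW}), that Bessel is not actually invoked in that particular step.
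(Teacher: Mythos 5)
Your proposal is correct, and it fills in a proof that the paper itself omits (the paper only asserts that the Bessel condition can be dropped, leaning on the machinery of Theorem~\ref{thm:HLW} and the results of Hern\'andez--Labate--Weiss and Jakobsen--Lemvig). The necessity direction via the canonical dual $\setsmall{A^{-1}T_{c_jk}g_j}$ is a clean reduction: the pair is Bessel, is a dual frame pair, and satisfies the dual $\alpha$-LIC because $L'(f)$ is just $A^{-1}$ times the $\alpha$-LIC functional, so Theorem~\ref{thm:HLW} applies verbatim. For sufficiency, the one point you flag --- that the pointwise, absolutely convergent Fourier series representation of $w_f$ does not need the Bessel property --- is indeed unproblematic: the absolute convergence $\sum_\alpha \abssmall{d_\alpha}\le L'(f)<\infty$ and the continuity/almost periodicity arguments in \cite[Proposition 9.4]{MR1916862} are driven entirely by the local integrability condition, which is exactly why the tight-frame characterization there (and in \cite{JakobsenReproducing2014}) carries no Bessel hypothesis; your Fatou-plus-continuity density argument then correctly replaces the role the Bessel assumption plays in the ``if'' direction of Theorem~\ref{thm:HLW}.
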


For tight frames, Theorem~\ref{thm:HLW-tight} gives information about
the Calder\'on sum. Indeed, it follows immediately from Theorem~\ref{thm:HLW-tight} that if $\gsi$ is a tight frame with
constant $A$ satisfying the $\alpha$-local integrability condition, then
 \[
\sum_{j \in J} \frac{1}{c_j}\vert \hat{g}_j(\gamma)\vert^2 = A \qquad \text{ a.e. } \gamma \in S.
\]


Finally, the following result allows us to construct frames without worrying about
technical local integrability conditions. In fact, the
condition~\eqref{eq:CC-condition} below implies that the
$\alpha$-LIC with respect to any set $E\in \cE$ is satisfied.

\begin{thm}
  \label{thm:suff-cond-for-A-and-B-bound} Consider the generalized
  shift invariant system \gsi.
\begin{enumerate}[(i)]
\item If
\begin{equation} \label{eq:CC-condition} B:= \esssup_{\gamma\in S}
  \sum_{j\in J} \sum_{m \in \Z}  \frac{1}{c_j} \absbig{ \hat{g}_j(\gamma) \hat{g}_j(\gamma+c_j^{-1}m)} < \infty,
\end{equation}
then \gsi\ is a Bessel sequence in $\check{L}^2(S)$ with bound
$B$.
\item Furthermore, if also
\[ A:= \essinf_{\gamma\in S} \Big( \sum_{j\in J} \frac{1}{c_j}  \abs{\hat
g_j(\gamma)}^2 - \sum_{j\in J} \sum_{0 \neq m \in \Z}
\frac{1}{c_j}  \abs{\hat{g}_j(\gamma) \hat{g}_j(\gamma+c_j^{-1}m) } \Big) > 0,
\]
then \gsi\ is a frame for $\check{L}^2(S)$ with bounds $A$ and $B$.
\end{enumerate}
\end{thm}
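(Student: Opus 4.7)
The plan is to use the classical Fourier-domain estimate that traces back to Casazza--Christensen and underlies essentially all explicit frame-bound results for GSI systems.

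\textbf{Step 1 (Fourier reformulation).} Fix $f \in \cD_E$ for some $E \in \cE$; by density, it suffices to establish the bounds on this subspace. For each fixed $j$, I would view $\innerprods{f}{T_{c_j k}g_j} = \innerprods{\hat f \, \overline{\hat g_j}}{\, \mathrm{e}^{2\pi i c_j k (\cdot)}}$ as the $k$th Fourier coefficient (with respect to $L^2[0,c_j^{-1}]$) of the $c_j^{-1}$-periodization of $\hat f\overline{\hat g_j}$. Parseval on the period, followed by expanding the modulus squared, the change of variable $\gamma \mapsto \gamma - c_j^{-1}m$ that unfolds the integral from $[0,c_j^{-1}]$ to $\R$, and reindexing $n = m'-m$, yields the standard identity
\[
\sum_{k\in\Z}\abs{\innerprods{f}{T_{c_jk}g_j}}^2 = \sum_{n\in\Z}\frac{1}{c_j}\int_{\R}\hat{f}(\gamma)\,\overline{\hat{f}(\gamma+c_j^{-1}n)}\,\overline{\hat{g}_j(\gamma)}\,\hat{g}_j(\gamma+c_j^{-1}n)\,d\gamma.
\]

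\textbf{Step 2 (Absolute convergence).} I then sum over $j \in J$. The CC-condition \eqref{eq:CC-condition}, combined with $\hat f$ being bounded and compactly supported, shows that the resulting triple sum-integral is absolutely convergent, i.e.\ $L'(f) < \infty$; in particular \gsi{} satisfies the $\alpha$-LIC with respect to any $E \in \cE$. This justifies Fubini and freely interchanging sums with integrals in the identity above.

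\textbf{Step 3 (Upper bound).} Taking absolute values and applying the elementary estimate $|ab| \le \tfrac12(|a|^2+|b|^2)$ with $a=\hat f(\gamma)$, $b=\hat f(\gamma+c_j^{-1}n)$ on the cross factor, followed by the change of variable $\gamma \mapsto \gamma - c_j^{-1}n$ (and $n \mapsto -n$) in the second resulting term, shows that the two halves are equal. Hence
\[
\sum_{j \in J}\sum_{k\in\Z}\abs{\innerprods{f}{T_{c_jk}g_j}}^2 \le \int_{\R}\abs{\hat f(\gamma)}^2 \sum_{j \in J}\sum_{m\in\Z}\frac{1}{c_j}\absbig{\hat g_j(\gamma)\,\hat g_j(\gamma+c_j^{-1}m)}\,d\gamma \le B\,\norm{f}^2,
\]
which establishes part (i) after density.

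\textbf{Step 4 (Lower bound).} For part (ii), I would isolate the diagonal ($n=0$) term in the identity of Step 1, which contributes exactly $\int_{\R}|\hat f(\gamma)|^2 \sum_j c_j^{-1}|\hat g_j(\gamma)|^2\,d\gamma$, and bound the off-diagonal ($n\ne 0$) contributions in absolute value by exactly the same symmetrization as in Step 3. This yields
\[
\sum_{j,k}\abs{\innerprods{f}{T_{c_jk}g_j}}^2 \ge \int_{\R}\abs{\hat f(\gamma)}^2 \Bigl(\sum_{j \in J}\tfrac{1}{c_j}\abs{\hat g_j(\gamma)}^2 - \sum_{j \in J}\sum_{0 \ne m \in \Z}\tfrac{1}{c_j}\absbig{\hat g_j(\gamma)\hat g_j(\gamma+c_j^{-1}m)}\Bigr)d\gamma \ge A\,\norm{f}^2.
\]
The only real obstacle is Step 2: without the absolute convergence provided by the CC-condition one cannot interchange the sums over $j$, $n$, and the integral. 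Once this is in place, the upper and lower bounds fall out mechanically from the symmetrization trick and the definitions of $B$ and $A$.
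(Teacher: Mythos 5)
Your argument is correct and is precisely the standard periodization/Parseval proof that the paper itself invokes without reproducing (it cites Christensen--Rahimi, Christensen--Goh and Jakobsen--Lemvig for the ``straightforward modification of the standard proof for $L^2(\R)$''): the per-$j$ identity, the observation that the CC-condition gives absolute convergence and hence the needed interchanges, and the symmetrization $\abs{ab}\le\tfrac12(\abs{a}^2+\abs{b}^2)$ with the change of variable $\gamma\mapsto\gamma-c_j^{-1}n$ are exactly the ingredients of that argument, adapted to $\check{L}^2(S)$ via the dense subspace $\cD_E$ and the standing assumption $\supp\hat{g}_j\subset S$. No gaps; the density extension of the lower bound is legitimate once the Bessel property from part (i) is in place.
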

The proof of Theorem~\ref{thm:suff-cond-for-A-and-B-bound} is a straightforward
modification of the standard proof for $\ltr$ (see, e.g., \cite{MR2420867,MR3313475,JakobsenReproducing2014}).

\section[A lower bound of Calder\'on sum for GSI systems]{A lower bound of the Calder\'on sum for GSI systems}
\label{sec:lower-bound-calderon}

Following a construction by Bownik and Rzeszotnik~\cite{MR2066821} and
Kutyniok and Labate~\cite{MR2283810}, we first show that the
Calder\'on sum for arbitrary GSI frames is not necessarily bounded
from below by the lower frame bound.
\begin{ex} \label{exa:gsi-bownic}
Consider the orthonormal basis $\{E_kT_m\chi_{[0,1[}\}_{k,m\bbz}$, an integer $N\geq 3$, and the
lattices  $\Gamma_j=N^j\Z, \, j\in \Bbb N$. There exists a sequence $\{t_i\}_{i=1}^\infty$ such that
\[
\bigcup_{i=1}^\infty(t_i+\Gamma_i)=\Z,\quad (t_i+\Gamma_i)\cap (t_j+\Gamma_j)=\emptyset\quad
\text{for }i\neq j,
\] i.e., $\Z$ can be decomposed  into translates of
the sparser lattices $\Gamma_j.$ It follows that
\[
\{{\cal F}^{-1}E_kT_m\chi_{[0,1[}\}_{k,m\bbz}=\{T_k E_m{\cal F}^{-1}\chi_{[0,1[}\}_{k,m\bbz}=\{T_{N^jk} T_{t_j}E_m{\cal F}^{-1}\chi_{[0,1[}\}_{k,m\bbz, j\in \mn}.
\]
Hence, the GSI system defined by
\[
c_{(j,m)}=N^j \; \text{ and } \;  g_{(j,m)} =  T_{t_j}E_m{\cal F}^{-1}\chi_{[0,1[}, \quad \text{for } (j,m)
\in J=\N \times \Z,
 \]
is an orthonormal basis and therefore, in particular, a Parseval frame. Since
 \begin{align*}
 \sum_{j=1}^\infty \sum_{m \bbz} \frac1{c_{(j,m)}} \absbig{\hat{g}_{(j,m)}(\gamma)}^2 &=
   \sum_{j=1}^\infty \sum_{m \bbz} \frac1{N^j} |{\cal F}T_{t_j}E_m{\cal F}^{-1}\chi_{[0,1[}(\gamma)|^2 \\
   &= \sum_{j=1}^\infty\sum_{m\bbz}\frac{1}{N^j}|E_{t_j} \chi_{\itvco{m}{m+1}}(\gamma)|^2
   = \sum_{j=1}^\infty\frac{1}{N^j}|\chi_{\Bbb R(\gamma)}|^2=\frac{1}{N-1},
 \end{align*}
 we conclude that the Calder\'on sum~\eqref{eq:5301a} is not bounded below by the lower frame bound
 $A=1$ whenever $N\geq3$. On the other hand, we see that the Calder\'on sum is indeed bounded from
 above by the (upper) frame bound $1$ as guaranteed by
 Theorem~\ref{thm:HLW-Bessel-bound}.
\end{ex}

We will now provide a technical condition on a frame $\gsi$ that implies that the Calder\'on sum
in~\eqref{eq:5301a} is bounded by the lower frame bound. The proof generalizes the argument in
\cite{MR1199539}.
\begin{thm}\label{thm:loc-lowerbound}
  Assume that
  \begin{equation}
    \label{eq:loc}
    \sum_{j\in J}|\hat{g}_j(\cdot)|^2\in \Lloc(\R\setminus E).
  \end{equation}
  If the GSI system $\gsi$ is a frame for $L^2(\R)$ with lower bound $A$,
  then
  \begin{equation}\label{eq:lowerbound}
    A\leq \sum_{j\in J}\frac{1}{c_j}|\hat{g}_j(\gamma)|^2  \qquad \text{a.e.} \, \gamma\in \R.
  \end{equation}
\end{thm}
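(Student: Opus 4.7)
The plan is to argue by contradiction. Suppose the conclusion fails, so that the set
\[
F := \{\gamma \in \R\setminus \bar{E} : \sum_{j\in J} c_j^{-1}|\hat{g}_j(\gamma)|^2 < A\}
\]
has positive Lebesgue measure (recall $\bar E$ has measure zero by the definition of $\cE$). By Theorem~\ref{thm:HLW-Bessel-bound}, the Calder\'on sum $\sum_{j} c_j^{-1}|\hat g_j|^2$ lies in $L^\infty$, and by assumption \eqref{eq:loc}, $\sum_j |\hat g_j|^2$ lies in $\Lloc$ on a neighbourhood of any point of $F$; both are therefore locally integrable there. By Lebesgue's differentiation theorem I can choose $\gamma_0 \in F$ that is simultaneously a density point of $F$ and a Lebesgue point of the Calder\'on sum, and then fix $\epsilon_0>0$ so small that $B(\gamma_0, 3\epsilon_0) \subset \R \setminus \bar E$. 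I then test the lower frame bound against $f_\epsilon \in L^2(\R)$ defined by $\hat f_\epsilon = \chi_{B(\gamma_0, \epsilon)}$ for $0 < \epsilon < \epsilon_0$.

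For each $j \in J$, a standard periodization computation (Plancherel on the torus $\R/c_j^{-1}\Z$ applied to $\gamma \mapsto \hat f_\epsilon(\gamma)\overline{\hat g_j(\gamma)}$) gives
\[
\sum_{k \in \Z} \absbig{\innerprods{f_\epsilon}{T_{c_j k} g_j}}^2 = \frac{1}{c_j}\! \int_\R |\hat f_\epsilon(\gamma)|^2 |\hat g_j(\gamma)|^2\,d\gamma + \frac{1}{c_j} \sum_{\ell \neq 0} \int_\R \hat f_\epsilon(\gamma) \overline{\hat g_j(\gamma)\hat f_\epsilon(\gamma + c_j^{-1}\ell)} \hat g_j(\gamma + c_j^{-1}\ell)\,d\gamma.
\]
Summing over $j \in J$ and using the lower frame inequality yields $A\,|B(\gamma_0, \epsilon)| \leq T_\epsilon + R_\epsilon$, where $T_\epsilon := \int_{B(\gamma_0, \epsilon)} \sum_{j} c_j^{-1}|\hat g_j(\gamma)|^2\,d\gamma$ is the diagonal term and $R_\epsilon$ collects the cross terms ($\ell \neq 0$). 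By Lebesgue differentiation at $\gamma_0$, $T_\epsilon / |B(\gamma_0, \epsilon)| \to \sum_j c_j^{-1}|\hat g_j(\gamma_0)|^2 < A$ as $\epsilon \to 0$.

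The remaining step is to show $R_\epsilon = o(|B(\gamma_0, \epsilon)|) = o(\epsilon)$. Each summand in $R_\epsilon$ vanishes unless $B(\gamma_0, \epsilon)$ intersects its translate $B(\gamma_0 - c_j^{-1}\ell, \epsilon)$, which forces $c_j^{-1}|\ell| < 2\epsilon$ and in particular $c_j > 1/(2\epsilon)$. Bounding $|\hat f_\epsilon| \leq 1$, applying AM-GM to $|\hat g_j(\gamma)||\hat g_j(\gamma + c_j^{-1}\ell)|$, and observing that both the number of admissible $\ell$ and the multiplicity with which any fixed point of $B(\gamma_0, 3\epsilon)$ is covered by the translated balls $B(\gamma_0 + c_j^{-1}\ell, \epsilon)$ are at most $O(\epsilon c_j)$, one obtains
\[
|R_\epsilon| \leq C\, \epsilon \int_{B(\gamma_0, 3\epsilon)} \sum_{j\in J} |\hat g_j(\gamma)|^2\,d\gamma
\]
for some absolute constant $C$; crucially, the combinatorial factor $c_j$ cancels the weight $c_j^{-1}$. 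By \eqref{eq:loc}, the integrand is in $L^1(B(\gamma_0, 3\epsilon_0))$, so by absolute continuity the integral tends to $0$ as $\epsilon \to 0$, giving $R_\epsilon/|B(\gamma_0, \epsilon)| \to 0$. Combining the two estimates yields $A \le \sum_j c_j^{-1}|\hat g_j(\gamma_0)|^2 < A$, a contradiction. The delicate point is precisely this cross-term estimate: the cancellation $c_j^{-1}\cdot c_j = 1$ is what transports the bound from the Calder\'on sum to the unweighted sum $\sum_j|\hat g_j|^2$, and it is there that hypothesis \eqref{eq:loc} enters in an essential way.
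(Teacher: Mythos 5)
Your proof is correct and follows essentially the same route as the paper's: you test the lower frame inequality against $\hat{f}_\eps=\chi_{[\gamma_0-\eps,\gamma_0+\eps]}$ via the standard periodization identity, treat the diagonal term by Lebesgue differentiation, and kill the cross terms through the overlap count $O(\eps c_j)$ whose factor $c_j$ cancels the weight $c_j^{-1}$ --- exactly the mechanism in the paper's estimate of $S_2$. The only difference is bookkeeping: where the paper uses \eqref{eq:loc} to extract a finite subset $J'\subset J$ with tail mass less than $\eta$ (letting $\eps\to 0$ and then $\eta\to 0$), you use \eqref{eq:loc} through absolute continuity of $\int \sum_{j\in J}|\hat{g}_j|^2$ over the shrinking window, phrased as a contradiction at a Lebesgue point, which is an equally valid (and slightly more streamlined) way to conclude.
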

\begin{proof}
  The assumption that the function $\sum_{j\in J}|\hat{g}_j(\cdot)|^2 $ is locally
  integrable in $\R\setminus E$ implies, by  the Lebesgue
  differentiation theorem, that the set of its Lebesgue point is dense in $\R\setminus
  E$.  Let $\omega_0\in\Bbb R\setminus \overline{E}$ be a Lebesgue
  point. Choose $\eps'>0$ such that $[\omega_0-\eps',\omega_0+\eps']\subseteq \R\setminus
  \overline{E}$.  By assumption, we have
  \[
  \int_{\omega_0-\eps'}^{\omega_0+\eps'} \sum_{j\in J}|\hat{g}_j(\gamma)|^2\,\mathrm{d}\gamma<\infty.
  \]
  This means that for every $\eta>0$, there exists a  finite set $J'\subset J$ such that
  \begin{equation}\label{eq:eta}
    \sum_{ j\in J\setminus J'}\int_{\omega_0-\eps'}^{\omega_0+\eps'}|\hat{g}_j(\gamma)|^2\,\mathrm{d}\gamma<\eta.
  \end{equation}
  Now define $M:=\max_{j\in J'}c_j$. Let $0<\eps<\min\{\eps', \frac{M^{-1}}{2}\}$. It is clear
  that~\eqref{eq:eta} also holds for every $\eps>0$ with $\eps<\eps'$.

Using Lemma 20.2.3 of \cite{MR1946982}, we have, for $f\in{\cal D}_E$,
  \begin{equation}\label{eq:main}
    A\|f\|^2\leq \sum_{j\in J}\sumk\frac{1}{c_j}\int_{\Bbb R}\hat{f}(\gamma)
    \overline{\hat{f}(\gamma+c_j^{-1}k)\hat{g}_j(\gamma)} \hat{g}_j(\gamma+c_j^{-1}k)\,\mathrm{d}\gamma.
  \end{equation}
  Consider $\hat{f}=\frac{1}{\sqrt{2\eps}}\chi_{K}$, where $K=[\omega_0-\eps,\omega_0+\eps]$. Since
  $f\in{\cal D}_E$, by inequality~\eqref{eq:main}, we have
  \begin{align*}
    A&\leq
    \sum_{j\in J}\sumk\frac{1}{2\eps c_j}\int_{K\cap (K-c_j^{-1}k)}
    \overline{\hat{g}_j(\gamma)}\hat{g}_j(\gamma+c_j^{-1}k)\,\mathrm{d}\gamma\\
    &= \sum_{j\in J'}\sumk\frac{1}{2\eps c_j}\int_{K\cap
      (K-c_j^{-1}k)}\overline{\hat{g}_j(\gamma)}\hat{g}_j(\gamma+c_j^{-1}k)d\gamma\\ & \qquad +
    \sum_{j\in J\setminus J'}\sumk\frac{1}{2\eps c_j}\int_{K\cap (K-c_j^{-1}k)}
    \overline{\hat{g}_j(\gamma)} \hat{g}_j(\gamma+c_j^{-1}k)\,\mathrm{d}\gamma\\
    &=:S_1+S_2.
  \end{align*}
  For $j\in J'$, we have $2\eps< M^{-1}\le c_j^{-1}|k|$ for all $k\bbz\setminus\{0\}$. Hence
  \[
  K\cap (K-c_j^{-1}k)=\emptyset,\quad k\bbz\setminus\{0\},j\in J'.
  \]
  Therefore
  \begin{align}\label{eq:0.6}
    S_1 =\sum_{j\in J'}\frac{1}{2\eps c_j}\int_{K}|\hat{g}_j(\gamma)|^2\,\mathrm{d}\gamma.
  \end{align}
  In particular, $S_1$ is a non-negative number.  For $j\in J\setminus J'$, $K\cap
  (K-c_j^{-1}k)=\emptyset$ if $|k|> 2\eps c_j$. Therefore,
  \[
  S_2 \leq \sum_{j\in J\setminus J'}\sum_{|k|\leq 2\eps c_j}\frac{1}{2\eps c_j}\int_{K\cap
    (K-c_j^{-1}k)}|\overline{\hat{g}_j(\gamma)} \hat{g}_j(\gamma+c_j^{-1}k)|\,\mathrm{d}\gamma.
  \]
  Now by the Cauchy-Schwarz inequality, we have
  \begin{align}\label{eq:3.11}
    S_2&\leq \sum_{j\in J\setminus J'}\sum_{|k|\leq 2\eps c_j}\frac{1}{2\eps c_j} \Bigl(\int_{K\cap
      (K-c_j^{-1}k)}|\hat{g}_j(\gamma)|^2\,\mathrm{d}\gamma \Bigr)^{\frac{1}{2}}
    \Bigl(\int_{(K+c_j^{-1}k)\cap K}|\hat{g}_j(\gamma)|^2 \,\mathrm{d}\gamma\Bigr)^\frac{1}{2}\nonumber\\
    &\leq \sum_{j\in  J\setminus J'}(4\eps c_j+1)\frac{1}{2\eps c_j}\int_{K}| \hat{g}_j(\gamma)|^2\,\mathrm{d}\gamma\nonumber\\
    &= 2\sum_{j\in J\setminus J'}\int_{K}|\hat{g}_j(\gamma)|^2\,\mathrm{d}\gamma+ \sum_{j\in J\setminus
      J'}\frac{1}{2\eps c_j}\int_{K}| \hat{g}_j(\gamma)|^2\,\mathrm{d}\gamma\nonumber\\
    &\leq 2\eta + \sum_{j\in J\setminus J'}\frac{1}{2\eps
      c_j}\int_{K}|\hat{g}_j(\gamma)|^2\,\mathrm{d}\gamma.
        \end{align}
  Since $A\leq S_1+S_2$, it follows from~\eqref{eq:0.6} and~\eqref{eq:3.11} that
  \begin{align*}
    A&\leq \sum_{j\in J'}\frac{1}{2\eps c_j}\int_{K}|\hat{g}_j(\gamma)|^2\,\mathrm{d}\gamma+2\eta +
    \sum_{j\in  J\setminus J'}\frac{1}{2\eps c_j} \int_{K}|\hat{g}_j(\gamma)|^2\,\mathrm{d}\gamma\\
    &= \sum_{j\in J}\frac{1}{2\eps c_j}\int_{K}|\hat{g}_j(\gamma)|^2\,\mathrm{d}\gamma + 2\eta\\
    &= \frac{1}{2\eps}\int_{\omega_0-\eps}^{\omega_0+\eps}\sum_{j\in
      J}\frac{1}{c_j}|\hat{g}_j(\gamma)|^2\,\mathrm{d}\gamma + 2\eta.
        \end{align*}
  Letting $\eps\rightarrow 0$, we arrive at
  \[
  A\leq \sum_{j\in J}\frac{1}{c_j}|\hat{g}_j(\omega_0)|^2+2\eta.
  \]
  Since $\eta>0$ is arbitrary, the proof is complete.
\end{proof}
In order to check the condition~\eqref{eq:loc}, it is enough to consider the partial sum over the
$j\in J$ for which $c_j>M$ for some $M>0$:
\begin{prop}
  \label{prp:sum-over-cj-M}
  Suppose that $\gsi$ is a Bessel sequence with bound $B$. Then~\eqref{eq:loc} holds if and only if
  there exist some $M>0$ such that
  \begin{equation}
    \label{eq:loc-with-M}
    \sum_{\{j:c_j>M\}}\absbig{\hat{g}_j(\cdot)}^2\in \Lloc(\R \setminus E).
  \end{equation}
\end{prop}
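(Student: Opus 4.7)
The plan is to prove the two implications separately, noting that the only nontrivial direction is ``$\Leftarrow$''.

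For the ``only if'' direction, the argument is immediate: since the terms $|\hat{g}_j(\cdot)|^2$ are nonnegative, the partial sum over $\set{j : c_j > M}$ is pointwise dominated by the full sum $\sum_{j\in J}|\hat{g}_j(\cdot)|^2$, so local integrability on $\R \setminus E$ transfers from the latter to the former for any choice of $M>0$.

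For the ``if'' direction, I would split $J = J_1 \cup J_2$ with $J_1 = \set{j : c_j \le M}$ and $J_2 = \set{j : c_j > M}$, so that
\[
\sum_{j \in J} \abs{\hat{g}_j(\gamma)}^2 = \sum_{j \in J_1} \abs{\hat{g}_j(\gamma)}^2 + \sum_{j \in J_2} \abs{\hat{g}_j(\gamma)}^2.
\]
The second sum is locally integrable on $\R \setminus E$ by assumption. For the first sum I would use the Bessel hypothesis via Theorem~\ref{thm:HLW-Bessel-bound}: since $c_j \le M$ on $J_1$, we have $\frac{1}{c_j} \geq \frac{1}{M}$, so $\abs{\hat g_j(\gamma)}^2 \le M \cdot \frac{1}{c_j} \abs{\hat g_j(\gamma)}^2$ pointwise, and summing yields
\[
\sum_{j \in J_1} \abs{\hat{g}_j(\gamma)}^2 \le M \sum_{j \in J_1} \frac{1}{c_j} \abs{\hat{g}_j(\gamma)}^2 \le M \sum_{j \in J} \frac{1}{c_j} \abs{\hat{g}_j(\gamma)}^2 \le MB
\]
for a.e.\ $\gamma \in \R$ by Theorem~\ref{thm:HLW-Bessel-bound}. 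In particular this finite-$c_j$ part is essentially bounded, hence locally integrable on all of $\R$. Adding the two pieces gives~\eqref{eq:loc}.

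There is no real obstacle here — the statement is essentially a bookkeeping reduction, and the only ingredient beyond elementary inequalities is the already-established upper bound on the Calder\'on sum of a Bessel sequence. The only thing to be careful about is to make sure the splitting $J = J_1 \cup J_2$ is well-defined and that the indexing in the estimates matches the statement; no measure-theoretic subtlety enters because the bound on $J_1$ is a true pointwise (a.e.) bound.
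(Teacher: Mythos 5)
Your proposal is correct and follows essentially the same route as the paper: split the index set at $c_j\le M$ versus $c_j>M$, bound the small-$c_j$ part by $MB$ via the Bessel bound on the Calder\'on sum (Theorem~\ref{thm:HLW-Bessel-bound}), and note the trivial domination for the converse direction. No gaps.
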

\begin{proof}
  To see this, consider any $M>0$. Then
  \begin{align*}
    \sum_{\{j:c_j>M\} }|\hat{g}_j(\gamma)|^2 \le    \sum_{j\in J}|\hat{g}_j(\gamma)|^2&=  \sum_{\{j:c_j\leq M\} }|\hat{g}_j(\gamma)|^2+\sum_{\{j:c_j>M\} }|\hat{g}_j(\gamma)|^2\\
    &\leq M\sum_{\{j:c_j\leq M\} } \frac{1}{c_j}|\hat{g}_j(\gamma)|^2+\sum_{\{j:c_j>M\} }|\hat{g}_j(\gamma)|^2\\
    & \leq MB+\sum_{\{j:c_j>M\} }|\hat{g}_j(\gamma)|^2.
  \end{align*}
\end{proof}


Let us take a closer look at the essential condition~\eqref{eq:loc}.  First we note that
in Example~\ref{exa:gsi-bownic} this condition is indeed violated: in fact, a slight modification of the
calculation in Example~\ref{exa:gsi-bownic} shows that the infinite series in~\eqref{eq:loc} is divergent for
all $\gamma \in\R$. The next result shows that under mild regularity conditions on $c_j$ and $g_j$,
condition~\eqref{eq:loc} is equivalent to the LIC. However, in practice, condition~\eqref{eq:loc},
or rather condition~\eqref{eq:loc-with-M}, is often much easier to work with than the LIC.

\begin{prop}
  \label{LICM}
  Let $E \in \cE$. Suppose that the Calder\'on sum for $\gsi$ is locally integrable on $\R\setminus E$, i.e.,
  \[
  \sum_{j \in J} \frac{1}{c_j}\vert \hat{g}_j(\cdot)\vert^2 \in \Lloc(\R\setminus E),
  \]
  Then the GSI system $\gsi$ satisfies the LIC with respect to the set $E$ if and only
  if~\eqref{eq:loc} holds.
\end{prop}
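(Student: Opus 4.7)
The strategy is to estimate the sum $L(f)$ from \eqref{eq:LIC} against a suitably chosen $f \in \cD_E$ in both directions, splitting the index set $J$ according to whether $c_j$ is small or large relative to a parameter to be chosen. For the ``if'' direction, assume \eqref{eq:loc}. For $f \in \cD_E$ with $K := \supp \hat f \subset \R \setminus E$, the term $\hat f(\gamma + c_j^{-1}m)$ vanishes unless $c_j^{-1} m \in K - K$, so at most $2 c_j \,\mathrm{diam}(K) + 1$ values of $m$ contribute. Bounding each such term by $\|\hat f\|_\infty^2$ and inserting into \eqref{eq:LIC} gives
\[
L(f) \le 2\,\mathrm{diam}(K)\,\|\hat f\|_\infty^2 \int_K \sum_{j\in J} |\hat g_j(\gamma)|^2 \, d\gamma + \|\hat f\|_\infty^2 \int_K \sum_{j\in J}\frac{1}{c_j}|\hat g_j(\gamma)|^2\,d\gamma,
\]
where the first integral is finite by \eqref{eq:loc} and the second by the standing Calder\'on hypothesis.

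For the converse, assume the LIC and fix any $\omega_0 \in \R \setminus \overline{E}$. Choose $\eps > 0$ so that $K := [\omega_0 - \eps, \omega_0 + \eps] \subset \R \setminus \overline{E}$, and test the LIC with the function $f \in \cD_E$ defined by $\hat f = \chi_K$. The key observation is that for $\gamma \in K' := [\omega_0 - \eps/2, \omega_0 + \eps/2]$ and any integer $m$ with $|m| < c_j \eps/2$ one has $\gamma + c_j^{-1} m \in K$; hence when $c_j \ge 2/\eps$, at least $c_j\eps/2$ integers $m$ satisfy $\chi_K(\gamma + c_j^{-1}m) = 1$. Inserting the resulting pointwise lower bound into \eqref{eq:LIC} yields
\[
L(f) \ge \frac{\eps}{2} \int_{K'} \sum_{\{j \in J \,:\, c_j \ge 2/\eps\}} |\hat g_j(\gamma)|^2\, d\gamma,
\]
so finiteness of $L(f)$ under the LIC forces the tail of $\sum_j|\hat g_j|^2$ to be integrable on $K'$.

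For the complementary head indices we apply the elementary estimate
\[
\sum_{\{j \,:\, c_j < 2/\eps\}} |\hat g_j(\gamma)|^2 \;=\; \sum_{\{j \,:\, c_j < 2/\eps\}} c_j \cdot \frac{1}{c_j} |\hat g_j(\gamma)|^2 \;\le\; \frac{2}{\eps} \sum_{j \in J} \frac{1}{c_j} |\hat g_j(\gamma)|^2,
\]
which is locally integrable by the Calder\'on hypothesis of the proposition. Adding the head and tail estimates produces $\sum_{j\in J}|\hat g_j|^2 \in L^1(K')$. Since $\omega_0$ was arbitrary in $\R \setminus \overline E$, every compact subset of $\R \setminus E$ is covered up to a null set by finitely many such intervals $K'$, which completes the derivation of \eqref{eq:loc}.

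I expect the main obstacle to be producing the pointwise lower bound on $\sum_m |\chi_K(\gamma + c_j^{-1}m)|^2$ in the converse direction. One must simultaneously restrict $\gamma$ to a subinterval $K' \subsetneq K$ of positive measure, so that many translates along the lattice $c_j^{-1}\Z$ remain inside $K$, and restrict attention to indices $c_j \ge 2/\eps$, for otherwise the lattice is too sparse to produce any nonzero contribution beyond the term $m = 0$. The split $c_j \gtrless 2/\eps$ aligns precisely with the two locally integrable quantities appearing in the hypothesis and conclusion, and it is this matching of scales that makes the two estimates patch together.
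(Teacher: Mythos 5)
Your proof is correct and takes essentially the same route as the paper: both directions test the LIC with $\hat f=\chi_K$ for an interval $K$, count the roughly $c_j\cdot\lvert K\rvert$ lattice translates $c_j^{-1}m$ that keep $\gamma+c_j^{-1}m$ inside $K$, and use the assumed local integrability of the Calder\'on sum to absorb the indices for which this count is too small. The only cosmetic difference is in the lower-bound bookkeeping: the paper works directly on $K\subset[c,d]$ and uses $\lfloor c_j(d-c)\rfloor/c_j\ge (d-c)-1/c_j$, whereas you restrict to the half-interval $K'$ and split explicitly at $c_j\ge 2/\eps$ --- the same trade-off packaged slightly differently.
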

\begin{proof}
  Assume that
  \begin{equation}
    \label{eq:licc}
    L(f)=\sum_{j\in J}\sum_{m\bbz}\frac{1}{c_j}\int_{\supp\hat{f}}| \hat{f}(\gamma+c_j^{-1}m) \hat{g}_j(\gamma)|^2\,\mathrm{d}\gamma<\infty
  \end{equation}
  for every $f\in{\cal D}_E$. Let $K$ be a compact set in $\R\setminus E$, i.e., $K \subset
  [c,d]\setminus E$, and let $\hat{f}=\chi_K$ in~\eqref{eq:licc}. Then for each $j\in J$, the set
  $K\cap (K-c_j^{-1}m)$ can only be nonempty if $|m|\leq c_j(d-c)$. Hence for the inner sum
  in~\eqref{eq:licc}, we have
  \begin{align*}
    \sum_{m\bbz} \frac{1}{c_j}\int_{K\cap (K-c_j^{-1}m)}|\hat{g}_j(\gamma)|^2\,\mathrm{d}\gamma &=
    \sum_{|m|\leq c_j(d-c)} \frac{1}{c_j} \int_{K\cap (K-c_j^{-1}m)}|\hat{g}_j(\gamma)|^2\,\mathrm{d}\gamma\\
    &\geq \frac{[c_j(d-c)]}{c_j}\int_{K}|\hat{g}_j(\gamma)|^2\,\mathrm{d}\gamma\\
    &\geq (d-c)\int_{K}|\hat{g}_j(\gamma)|^2d
    \gamma-\frac{1}{c_j}\int_{K}|\hat{g}_j(\gamma)|^2 \,\mathrm{d}\gamma,
  \end{align*}
  where we for the first inequality use that the union of the sets $K
  \cap (K-c_j^{-1}m)$, $|m|\leq [c_j(d-c)]$, contains $[c_j(d-c)]$
  copies of $K$.
  Hence,
  \begin{align*}
    (d-c)\sum_{j\in J}\int_{K}|\hat{g}_j(\gamma)|^2\,\mathrm{d}\gamma &\leq L(f)+ \sum_{j\in
      J}\frac{1}{c_j}\int_{K}|\hat{g}_j(\gamma)|^2\,\mathrm{d}\gamma<\infty.
  \end{align*}
  Thus~\eqref{eq:loc} holds.  Conversely, assume that~\eqref{eq:loc} holds. For $f\in{\cal D}_E$,
  let $R>0$ such that $\supp\hat{f}\subseteq\{\ga:|\ga|\leq R\}$.  Therefore the inner sum in
  \eqref{eq:licc} has at most $4Rc_j+1$ terms. By splitting the sum into two terms, we have
  \begin{align*}
    L(f)&\leq \|\hat{f}\|^2_\infty \sum_{j\in J}\sum_{|m|\leq 2Rc_j}\frac1{c_j}\int_{\supp\hat{f}}|\hat{g}_j(\gamma)|^2\,\mathrm{d}\gamma\\
    &\leq 4R\|\hat{f}\|^2_\infty \sum_{j\in J}\int_{\supp\hat{f}}|\hat{g}_j(\gamma)|^2\,\mathrm{d}\gamma+ \|\hat{f}\|^2_\infty
    \sum_{j\in J}\frac1{c_j}\int_{\supp\hat{f}}|\hat{g}_j(\gamma)|^2\,\mathrm{d}\gamma<\infty.
  \end{align*}
\end{proof}
Using Proposition~\ref{LICM}, the following result is now just a reformulation of
Theorem~\ref{thm:loc-lowerbound}.
\begin{cor}\label{thm:lic-lower-bound}
  Assume that the LIC with respect to some set $E \in \cE$ holds for the GSI system $\gsi$.
If $\gsi$ is a frame for $L^2(\R)$ with lower bound $A$,
then \eqref{eq:lowerbound} holds.
\end{cor}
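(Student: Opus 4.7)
The plan is to deduce this corollary as a direct combination of Theorem~\ref{thm:loc-lowerbound} and Proposition~\ref{LICM}, using the Bessel bound to supply the missing integrability hypothesis of the proposition.

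First, I would note that since $\gsi$ is a frame for $L^2(\R)$, it is in particular a Bessel sequence with some bound $B < \infty$. By Theorem~\ref{thm:HLW-Bessel-bound}, this immediately gives the pointwise bound
\[
\sum_{j\in J}\frac{1}{c_j}\abs{\hat{g}_j(\gamma)}^2 \le B \qquad \text{for a.e.\ } \gamma \in \R,
\]
so in particular the Calder\'on sum $\sum_{j \in J} c_j^{-1} \abs{\hat{g}_j(\cdot)}^2$ lies in $\Lloc(\R)$, and a fortiori in $\Lloc(\R\setminus E)$ for the given $E \in \cE$.

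Second, the hypothesis of the corollary states that the LIC with respect to $E$ holds. Combining this with the local integrability of the Calder\'on sum established in the previous step, the equivalence provided by Proposition~\ref{LICM} applies and yields
\[
\sum_{j \in J} \abs{\hat{g}_j(\cdot)}^2 \in \Lloc(\R \setminus E),
\]
which is exactly condition~\eqref{eq:loc}.

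Finally, with \eqref{eq:loc} in hand and the frame assumption intact, Theorem~\ref{thm:loc-lowerbound} applies directly to conclude \eqref{eq:lowerbound}. I do not foresee any real obstacle here; the corollary is essentially a repackaging of the preceding theorem under the more standard LIC assumption, with the role of Proposition~\ref{LICM} being to bridge between the two integrability formulations and the role of Theorem~\ref{thm:HLW-Bessel-bound} being to supply the background integrability of the Calder\'on sum for free from the Bessel property.
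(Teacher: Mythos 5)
Your proposal is correct and follows essentially the same route as the paper, which presents the corollary as a reformulation of Theorem~\ref{thm:loc-lowerbound} via Proposition~\ref{LICM}. Your explicit use of Theorem~\ref{thm:HLW-Bessel-bound} to verify the local integrability hypothesis of Proposition~\ref{LICM} is exactly the detail the paper leaves implicit (one could alternatively extract it from the $m=0$ terms of the LIC itself), so the argument is complete as written.
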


\begin{rem}
\label{rem:lower-bound-for-subpspace}
  Corollary~\ref{thm:lic-lower-bound} also holds for GSI frames for
  $\check{L}^2(S)$ in which case the Calder\'on sum is bounded from
  below by $A$ on $S$ and zero otherwise.
\end{rem}

The next example shows that the Calder\'on sum might be bounded from below by the lower frame bound,
even if neither the technical
condition~\eqref{eq:loc} nor the LIC is satisfied.
\begin{ex}
  \label{exa:loc-not-necessary}
  Consider the GSI system $\{ T_{N^jk}g_{j,p,\ell}\}_{k,\ell \bbz,j\in\Bbb N,p\in{\cal P}_j}$,
  where
  \bes \mathcal{P}_j=\{1,\ldots,N^j-1\}, \, \, \hat{g}_{j,p,\ell}=(N-1)^{1/2} T_\ell
  \chi_{[\frac{p}{N^j},\frac{p+1}{N^j}]}, \, \, c_{j,p,\ell}=N^j.\ens This system satisfies the
  $\alpha$-LIC. To see this, fix $f\in \cD_{\emptyset}$. We then have
  \begin{align*}
    L'(f)&= \sum_{j\in\Bbb N} \sum_{p\in{\cP}_j} \sum_{\ell \in\Z}
    \sum_{m\bbz}\frac{1}{N^j}\int_{\Bbb
      R}|\hat{f}(\gamma)\overline{\hat{f}(\gamma+\frac{m}{N^j})\hat{g}_{j,p,\ell}(\gamma
      )}\hat{g}_{j,p,\ell} (\gamma -\frac{m}{N^j}) |\,\mathrm{d}\gamma\\
    &\leq(N-1)\|\hat{f}\|_\infty^2 \sum_{j\in\Bbb N} \sum_{p\in{\cP}_j} \sum_{\ell \in\Z}
    \sum_{m\bbz}\frac{1}{N^j}\int_{\supp\hat{f}}  \chi_{[\frac{p}{N^j},\frac{p+1}{N^j}]}(\gamma -\ell)
    \chi_{[\frac{p}{N^j},\frac{p+1}{N^j}]} (\gamma -\ell-\frac{m}{N^j})\,\mathrm{d}\gamma\\
    &=(N-1)\|\hat{f}\|_\infty^2  \sum_{j\in\Bbb N} \sum_{p\in{\cP}_j} \sum_{\ell \in\Z}  \frac{1}{N^j}\int_{\supp\hat{f}} \chi_{[\frac{p}{N^j},\frac{p+1}{N^j}]}(\gamma -\ell)\,\mathrm{d}\gamma\\
    &=(N-1)\|\hat{f}\|_\infty^2 \sum_{j\in\Bbb N} \frac{1}{N^j}\int_{\supp\hat{f}} \chi_{\Bbb
      R}(\gamma)\,\mathrm{d}\gamma\\
    &=\|\hat{f}\|_\infty^2 \int_{\supp\hat{f}} \chi_{\Bbb R}(\gamma)\,\mathrm{d}\gamma<\infty.
  \end{align*}
  The considered GSI system is also a Parseval frame. To see this, using Proposition~\ref{thm:HLW-tight}, it is
  sufficient to show that
  \begin{equation}
    \label{eq:t-alpha}
    \sum_{j\in J_{\alpha}} \sum_{p\in{\cP}_j} \sum_{\ell \in\Z} \frac1{N^j}\,
    \overline{\hat{g}_{j,p,\ell}(\gamma)} \hat{g}_{j,p,\ell}(\gamma+ \alpha)= \delta_{\alpha, 0} \qquad \text{a.e.} \, \gamma\in \R.
      \end{equation}
  Let $\Lambda = \{ N^{-j} n \ : \ j\in J, n\bbz\};$ and, for $\alpha \in \Lambda$, let
  \bes J_\alpha=
  \{j\in J \ : \ \exists n\bbz \ \mbox{such that} \ \alpha= N^{-j} n\}.\ens Assume that
  $0\neq\alpha=\frac{n}{N^{j_0}}$ and $n\leq N^{j_0}$. Then $J_{\alpha}\subseteq\{j:j\geq
  j_0\}$. But for each $j \geq j_0$ and $\gamma \in \Bbb R$ we have
  \[
  \chi_{[\frac{p}{N^j},\frac{p+1}{N^j}]}(\gamma -\ell) \chi_{[\frac{p}{N^j},\frac{p+1}{N^j}]} (\gamma
  -\ell-\frac{n}{N^{j_0}})=0.
  \]
  Hence, \eqref{eq:t-alpha} is satisfied for $\alpha\neq 0$. Now, consider~\eqref{eq:t-alpha} with
  $\alpha=0$:
  \begin{align*}
    \sum_{j\in \Bbb N}\sum_{ p\in{\cal P}_j}\sum_{ \ell\in\Z} \frac1{N^j}\,
    |\hat{g}_{j,p,\ell}(\gamma)|^2
    &=(N-1)  \sum_{j\in \Bbb N}\sum_{ p\in{\cal P}_j}\sum_{ \ell\in\Z}   \frac1{N^j}    \,  \chi_{[\frac{p}{N^j},\frac{p+1}{N^j}]} (\gamma -\ell)\\
    &=(N-1)\sum_{j\in \Bbb N}\frac1{N^j}\,  \chi_{\Bbb R}(\gamma)=1.
  \end{align*}
  One can easily show that this Parseval frame does not satisfies condition~\eqref{eq:loc}, and
  consequently by Proposition~\ref{LICM}, it cannot satisfy the LIC
  for any $E \in \cE$, while the
  inequality~\eqref{eq:lowerbound} holds with $A=1$. 
\end{ex}

\section{Special cases of GSI systems}
\label{sec:special-cases-gsi}

We will now show that Theorem~\ref{thm:loc-lowerbound} indeed generalizes the known results for
wavelet and Gabor systems. First, for regular wavelet systems the condition~\eqref{eq:loc-with-M} is always satisfied for $E=\{0\}$:
\begin{lem}
\label{lem:wavelets-locM}
  Let $\psi\in L^2(\R)$ and $a>1, b>0$. Consider the wavelet
  system $\set{D_{a^j}T_{bk}\psi},$ written on the form \eqref{eq:0803c}. Then there exists $M\in\R$ such that
  \eqref{eq:loc-with-M} holds for $E=\{0\}$, i.e.,
  \begin{equation}
    \label{eq:103a}
    \sum_{\{j: c_j>M\}}|\hat{g_j}(\cdot)|^2\in \Lloc(\Bbb
    R\setminus\{0\}).
  \end{equation}
\end{lem}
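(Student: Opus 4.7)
The plan is to use the explicit form of the wavelet generators in the GSI representation \eqref{eq:0803c}. With $c_j=a^jb$ and $g_j=D_{a^j}\psi$, a direct computation gives $\hat{g}_j(\gamma)=a^{j/2}\hat\psi(a^j\gamma)$, hence $\abs{\hat{g}_j(\gamma)}^2=a^j\abs{\hat\psi(a^j\gamma)}^2$. For any choice of $M>0$, the condition $c_j>M$ is equivalent to $j>J_0$ where $J_0:=\log_a(M/b)$. Since $a>1$, this cuts off all sufficiently small $j$ (or, if we pick $M$ big, all nonpositive $j$). So the task reduces to showing that, for some $J_0\in\Z$,
\[
F(\gamma):=\sum_{j>J_0} a^j\abs{\hat\psi(a^j\gamma)}^2
\]
belongs to $\Lloc(\R\setminus\{0\})$.

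Next I would fix an arbitrary compact set $K\subset\R\setminus\{0\}$ and choose $0<\alpha<\beta$ such that $K\subset\setprop{\gamma\in\R}{\alpha\le\abs{\gamma}\le\beta}$. By Tonelli's theorem and the substitution $\eta=a^j\gamma$,
\[
\int_K F(\gamma)\,d\gamma\le\sum_{j>J_0}\int_{\alpha\le\abs{\gamma}\le\beta} a^j\abs{\hat\psi(a^j\gamma)}^2\,d\gamma=\sum_{j>J_0}\int_{a^j\alpha\le\abs{\eta}\le a^j\beta}\abs{\hat\psi(\eta)}^2\,d\eta.
\]
The key combinatorial observation is that a given $\eta$ lies in the annulus $\setprop{\eta}{a^j\alpha\le\abs{\eta}\le a^j\beta}$ precisely when $\log_a(\abs{\eta}/\beta)\le j\le\log_a(\abs{\eta}/\alpha)$, so the number of such $j$ is at most $N:=\ceilsmall{\log_a(\beta/\alpha)}+1$, independent of $\eta$.

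Interchanging sum and integral via the counting function therefore yields
\[
\int_K F(\gamma)\,d\gamma\le N\int_{\abs{\eta}\ge a^{J_0}\alpha}\abs{\hat\psi(\eta)}^2\,d\eta\le N\norm{\psi}^2<\infty,
\]
which gives $F\in\Lloc(\R\setminus\{0\})$, as desired. The only "trick" is the overlap-count estimate on the dyadic-type annuli, which is standard for wavelet Calder\'on-type arguments, so I do not expect any serious obstacle; the content of the lemma is really just the observation that on any compact set bounded away from the origin, the scaled copies $a^j\abs{\hat\psi(a^j\cdot)}^2$ tile frequency space with finite multiplicity.
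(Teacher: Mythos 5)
Your proof is correct and follows essentially the same route as the paper: both reduce, via the substitution $\eta=a^j\gamma$, to showing that the dilates of a compact set bounded away from the origin have bounded overlap, so the sum is controlled by a fixed multiple of $\|\hat\psi\|^2$. The only cosmetic difference is that you bound the pointwise multiplicity of the annuli (your constant $N$), while the paper partitions the family $\{a^jK\}_{j>0}$ into finitely many subfamilies of pairwise disjoint sets; these are the same estimate in different clothing, and your version even yields the conclusion for every $M>0$ rather than one specific choice.
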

\begin{proof}
  Assume that $K$ is a compact subset of $\R\setminus\{0\}$. Let
  $M=ab;$ then $a^jb>M$ if and only if $j>0$. Now,
  \[
  \sum_{j>0}\int_K |\hat{g_j}(\gamma)|^2 \,\mathrm{d}\gamma = \sum_{j>0}\int_K
  |D_{a^{-j}}\hat{\psi}(\gamma)|^2\,\mathrm{d}\gamma = \sum_{j>0}\int_K
  a^{j}|\hat{\psi}(a^j\gamma)|^2\,\mathrm{d}\gamma =\sum_{j>0}\int_{a^{j}K}
  |\hat{\psi}(\gamma)|^2\,\mathrm{d}\gamma.
  \]
  Since $K$ is a compact subset of $\R\setminus\{0\}$, one can find $L,R>0$ such that
  $K\subset\{\gamma:\frac{1}{R}<|\gamma|<R\}$ and $a^L>R^2$. Hence if $j-j_0\geq L$, then
  $a^{j_0}K\cap a^jK=\emptyset$.
Thus the family of subsets
  $\{a^jK\}_{j>0}$ can be considered as a finite union of mutually disjoint sets,
  \[
  \{a^jK\}_{j>0}=\bigcup_{i=1}^{L}
  \{a^{i+kL}K\}_{k=0}^\infty,
  \]
 Therefore,
  \[
  \sum_{j>0}\int_K |\hat{g_j}(\gamma)|^2 \,\mathrm{d}\gamma=\sum_{j>0}\int_{a^{j}K}
  |\hat{\psi}(\gamma)|^2\,\mathrm{d}\gamma\leq L\int_{\R} |\hat{\psi}(\gamma)|^2\,\mathrm{d}\gamma<\infty,
  \]
  which implies that~\eqref{eq:103a} holds.
\end{proof}
From Theorem~\ref{thm:loc-lowerbound} we can now recover the lower bound
in~\eqref{eq:wavelet-calderon-bounds} for wavelet frames. Assume that
$\set{D_{a^j}T_{bk}\psi}_{j,k\in \Z}$ is a frame for $L^2(\R)$ with bounds $A$ and $B$. By
Lemma~\ref{lem:wavelets-locM} and Proposition~\ref{prp:sum-over-cj-M}, the wavelet system satisfies
\eqref{eq:loc}; thus \eqref{eq:wavelet-calderon-bounds} holds by Theorem~\ref{thm:loc-lowerbound}.


To establish the lower bound of the Calder\'on sum for irregular wavelet
systems $\irw$, first obtained by
Yang and Zhou \cite{MR2038269}, we have to work a little harder. We mention that the result by Yang and
Zhou covers the more general setting of irregular dilations \emph{and} translations.
Recall that a sequence $\aj$ of positive numbers is said to be logarithmically separated by $\lambda>0$
if $\frac{a_{j+1}}{a_j}\geq\lambda$ for
  each $j\in\Z$.
\begin{lem}
\label{lem:log-seperated}
If a sequence $\aj$ of positive numbers is logarithmically separated by $\lambda>0$, then for each
$\psi\in \ltr$ and every compact subset $K\subset \R\setminus \{0\}$, we have
\[
\sumj \int_{a_jK}|\hp(\gamma)|^2 \, d \ga<\infty.
\]
\end{lem}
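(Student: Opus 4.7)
The plan is to adapt the dyadic decomposition argument from Lemma~\ref{lem:wavelets-locM}, replacing the regular dilation factors $a^j$ by the log-separated sequence $\aj$. First, since $K$ is a compact subset of $\R\setminus\set{0}$, I would pick $R>1$ such that $K\subset\setprop{\gamma\in\R}{1/R<\abs{\gamma}<R}$; each dilate $a_jK$ then lies inside the annulus $\setprop{\gamma\in\R}{a_j/R<\abs{\gamma}<a_jR}$.

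The key combinatorial step is to produce a single integer $L$ that makes $a_jK$ and $a_{j+L}K$ disjoint \emph{uniformly} in $j\in\Z$. In the relevant case $\lambda>1$, log-separation iterates to $a_{j+n}/a_j\geq\lambda^n$ for every $n\geq 0$ and every $j\in\Z$. Two annuli of the form above with ratio at least $R^2$ are automatically disjoint, so any integer $L$ with $\lambda^L\geq R^2$ does the job, and crucially this choice is the same for all $j$.

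With such an $L$ fixed, I would partition $\Z=\bigcup_{i=0}^{L-1}(i+L\Z)$ and observe that, for each residue class $i$, the family $\set{a_{i+kL}K}_{k\in\Z}$ consists of pairwise disjoint subsets of $\R$. Hence for each $i$,
\[
\sum_{k\in\Z}\int_{a_{i+kL}K}\abs{\hp(\gamma)}^2\,d\gamma\leq\int_{\R}\abs{\hp(\gamma)}^2\,d\gamma=\norm{\hp}^2,
\]
and summing the $L$ residue classes yields $\sumj\int_{a_jK}\abs{\hp(\gamma)}^2\,d\gamma\leq L\norm{\hp}^2<\infty$.

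I do not anticipate a serious obstacle: once the uniform gap $L$ has been extracted from the log-separation hypothesis, the proof reduces to the same bookkeeping as in Lemma~\ref{lem:wavelets-locM}. The one mild novelty compared to the regular case is that the sum now runs over all of $\Z$, but this is harmless since the log-separation estimate $a_{j+n}/a_j\geq\lambda^{n}$ is symmetric in the sign of $n$ and therefore controls both the $j\to+\infty$ and $j\to-\infty$ tails simultaneously.
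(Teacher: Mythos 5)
Your argument is correct and is essentially the paper's own proof: both extract a uniform index gap ($L$ with $\lambda^L\geq R^2$ for you, $r$ with $\lambda^{r-1}>d/c$ in the paper) making dilates with index difference at least that gap disjoint, and then bound the sum by a fixed multiple of $\normsmall{\hat\psi}^2$; your explicit residue-class partition versus the paper's ``at most $2r$ overlaps'' counting is only a cosmetic difference. Your restriction to $\lambda>1$ is harmless, since the paper's proof implicitly needs it as well and it is the only case arising in the application.
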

\begin{proof}
  Without loss of generality, assume that $\aj$ is an increasing sequence and
  $K\subset\R_+\setminus\{0\}$. There exist positive number $c,d$ such that $K\subset [c,d]$.
   Take $r\in\N$ such that $\lambda^{r-1}> \frac{d}{c}$; then
  \[
  \frac{a_{j+r}}{a_j}=\frac{a_{j+r}}{a_{j+r-1}} \frac{a_{j+r-1}}{a_{j+r-2}}\cdots\frac{a_{j+1}}{a_j}\geq
  \lambda^{r-1}> \frac{d}{c}.
    \]
  Hence $a_{j+r} c> a_jd$. This shows that $a_jK\cap a_{j+r}K=\emptyset$ for all $j\in\Z$. By a similar argument, $a_jK\cap a_{j-r}K=\emptyset$. Therefore
  \[
  \sumj \int_{a_jK}|\hp(\gamma)|^2 \, d \leq 2r \int_{\R}|\hp(\gamma)|^2 \, d \gamma<\infty.
  \]
\end{proof}
We will now show that any wavelet system (with regular translates) that form a Bessel sequence
automatically satisfies the LIC:
\begin{prop}
  Let $\psi\in\ltr$ and $\aj$ be a sequence in $\R_+$. If $\irw$ is a Bessel sequence, then it
  satisfies the LIC with respect to $E=\{0\}$.
\end{prop}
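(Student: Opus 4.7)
The plan is to reduce the LIC to a local integrability condition on the Calder\'on-type sum $\sum_j|\hat g_j|^2$ and then to exploit the Bessel hypothesis to control the density of the dilation parameters. Writing the wavelet system in its natural GSI form with $c_j = a_j b$ and $g_j = D_{a_j}\psi$, Theorem~\ref{thm:HLW-Bessel-bound} gives $b^{-1}\sum_j |\hat\psi(a_j\gamma)|^2 \le B$ for a.e.\ $\gamma$, so in particular the Calder\'on sum lies in $\Lloc(\R)$. By Proposition~\ref{LICM} the LIC with $E=\{0\}$ is equivalent to $\sum_j|\hat g_j(\cdot)|^2 \in \Lloc(\R\setminus\{0\})$, and by Proposition~\ref{prp:sum-over-cj-M} it suffices to verify this for the partial sum over $\{j : c_j > M\}$ for some $M>0$. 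Since $|\hat g_j(\gamma)|^2 = a_j|\hat\psi(a_j\gamma)|^2$, the substitution $\xi = a_j\gamma$ recasts the desired bound, for every compact $K \subset \R\setminus\{0\}$, as
\[
\sum_{j : a_j > M/b} \int_{a_j K}|\hat\psi(\xi)|^2\,\mathrm{d}\xi < \infty.
\]

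Assuming $\psi \ne 0$ (otherwise the LIC holds trivially), the strategy is to apply Lemma~\ref{lem:log-seperated} after splitting the relevant indices into finitely many logarithmically separated subsequences. The key auxiliary claim is that there exist $\rho > 1$ and $C < \infty$ such that
\[
 \card{\set{j: a_j \in [\alpha, \rho\alpha]}} \le C \qquad \text{for every } \alpha > 0.
\]
Granted this, enumerate $\set{j: a_j > M/b}$ so that $b_1 \le b_2 \le \ldots$; the claim forces $b_{i+C}/b_i > \rho$, so the $C$ residue classes $\set{b_i: i \equiv r \!\!\pmod C}$, $r = 0,\dots,C-1$, are each logarithmically separated by $\rho$. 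Lemma~\ref{lem:log-seperated} applied to each residue class and summed over $r$ then yields the required convergence, which chains back to the LIC via Propositions~\ref{prp:sum-over-cj-M} and \ref{LICM}.

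The main obstacle is the density claim, which I would establish directly from the pointwise Calder\'on bound $\sum_j|\hat\psi(a_j\gamma)|^2 \le Bb$. Since $\hat\psi \not\equiv 0$, one can pick an interval $[R_1,R_2]$ (without loss of generality in $\R_+$) such that $\delta := \int_{R_1}^{R_2}|\hat\psi|^2 > 0$, and set $\rho := R_2/R_1 > 1$. For any $\alpha > 0$ and any $j$ with $a_j \in [\alpha, \rho\alpha]$, the scaled interval $[R_1/a_j, R_2/a_j]$ is contained in the fixed window $I_\alpha := [R_1/(\rho\alpha), R_2/\alpha]$, and a change of variables gives
\[
\int_{R_1/a_j}^{R_2/a_j}|\hat\psi(a_j\gamma)|^2\,\mathrm{d}\gamma = \frac{\delta}{a_j} \ge \frac{\delta}{\rho\alpha}.
\]
Summing these lower bounds over $j$ with $a_j \in [\alpha, \rho\alpha]$ and comparing with the upper bound $\int_{I_\alpha}\sum_j|\hat\psi(a_j\gamma)|^2\,\mathrm{d}\gamma \le Bb\,\meas{I_\alpha}$ gives a bound on the cardinality independent of $\alpha$. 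The case when $\hat\psi$ is concentrated on $\R_-$ is symmetric, and a general compact $K \subset \R\setminus\{0\}$ is handled by splitting into its positive and negative parts.
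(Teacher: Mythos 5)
Your argument is correct, and its skeleton is the same as the paper's: split the dilation parameters into finitely many logarithmically separated subsequences, apply Lemma~\ref{lem:log-seperated} to each to get $\sum_j\int_K\abssmall{\hat{g}_j}^2<\infty$ for compact $K\subset\R\setminus\{0\}$, and conclude the LIC. The one genuine difference is the counting ingredient: the paper imports it as Lemma~1 of Yang--Zhou \cite{MR2038269} (at most $M$ of the $a_j$ lie in each octave $(2^{n-1/2},2^{n+1/2}]$) and then splits into even/odd octave classes, whereas you derive the uniform bound $\cardsmall{\set{j:a_j\in[\alpha,\rho\alpha]}}\le C$ directly from the Calder\'on upper bound $\sum_j\abssmall{\hat{\psi}(a_j\gamma)}^2\le Bb$ of Theorem~\ref{thm:HLW-Bessel-bound}, and split into residue classes of an increasing enumeration; this makes the proof self-contained at the cost of a short extra computation. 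You also route the reduction explicitly through Propositions~\ref{prp:sum-over-cj-M} and~\ref{LICM} (using the Bessel bound to get local integrability of the Calder\'on sum), a step the paper leaves implicit; restricting to $c_j>M$ as you do also conveniently avoids any issue with the $a_j$ accumulating at $0$ when enumerating them in increasing order.
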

\begin{proof}
  Define $I_n=( 2^{n-\frac1{2}}, 2^{n+\frac1{2}}]$, for $n\in\Z$. By Lemma~1 in \cite{MR2038269}, there exists $M>0$ such that for each $n\in\Z$, the number of $j$ which $a_j$
  belongs to $I_n$ is less than $M$. On the other hand, each point of $I_{2n}$ is logarithmically
  separated with points from the interval $I_{2m}$ for $m,n\in\Z$ and $m\neq n$. Similarly a point
  of $I_{2n+1}$ is logarithmically separated with points from the interval $I_{2m+1}$ for $m,n\in\Z$
  and $m\neq n$. Hence we can consider $\aj$ as a disjoint union of
  finitely many  logarithmically separated subsets $\{a_j\}_{j\in J_i}, \ i=1,\cdots,N$.  Let $K\subset\R\setminus\{0\}$ be a
  compact set. By Lemma~\ref{lem:log-seperated}, we know that $\sum_{j\in J_i}
  \int_{a_jK}|\hp(\gamma)|^2<\infty$ for each  $i=1,\cdots,N$; it follows that
  \begin{align*}
    \sumj \int_{K}|a_j^{\frac{1}{2}}\hp(a_j\gamma)|^2 d\ga&=\sumj
    \int_{a_jK}|\hp(\gamma)|^2d\ga=\sum_{i=1}^N\sum_{j\in J_i} \int_{a_jK}|\hp(\gamma)|^2d\ga< \infty,
  \end{align*} as desired.
  \end{proof}


\begin{cor}
  Suppose that $\irw$ is a frame with lower bounds $A>0$. Then
  $\irw$ satisfies the LIC with respect to $E=\{0\}$ and
  \begin{equation*}
    A\leq \sumj \frac{1}{b}|\hp(a_j\gamma)|^2  \qquad \text{for a.e. } \gamma \in\R.
  \end{equation*}
\end{cor}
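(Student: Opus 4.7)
The plan is to combine the preceding proposition (that any wavelet Bessel sequence $\irw$ satisfies the LIC with respect to $E=\{0\}$) with Corollary~\ref{thm:lic-lower-bound}, so essentially no new argument is needed beyond unwinding the GSI notation.

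Since $\irw$ is a frame, it is in particular a Bessel sequence, so the previous proposition gives immediately that $\irw$ satisfies the LIC with respect to $E=\{0\}$. This disposes of the first claim and verifies the hypothesis of Corollary~\ref{thm:lic-lower-bound}.

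Next I would rewrite the wavelet system in its standard GSI form by commuting translation and dilation:
\[
D_{a_j}T_{bk}\psi = T_{a_jbk}D_{a_j}\psi,
\]
so $\irw = \gsi$ with $c_j = a_jb$ and $g_j = D_{a_j}\psi$ for $j\in J = \Z$. A direct computation of the Fourier transform yields
\[
\hat g_j(\gamma) = a_j^{1/2}\,\hp(a_j\gamma), \qquad \frac{1}{c_j}\absbig{\hat g_j(\gamma)}^2 = \frac{1}{b}\absbig{\hp(a_j\gamma)}^2.
\]

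Applying Corollary~\ref{thm:lic-lower-bound} to this GSI representation of $\irw$ gives
\[
A \leq \sumj \frac{1}{c_j}\absbig{\hat g_j(\gamma)}^2 = \sumj \frac{1}{b}\absbig{\hp(a_j\gamma)}^2 \qquad \text{for a.e. }\gamma\in\R,
\]
which is exactly the desired inequality. There is no real obstacle: both hypotheses of Corollary~\ref{thm:lic-lower-bound} have already been verified, and the conversion between the wavelet and GSI bookkeeping is routine.
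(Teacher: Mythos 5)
Your proposal is correct and is exactly the argument the paper intends: the frame property gives the Bessel property, the preceding proposition then yields the LIC with respect to $E=\{0\}$, and Corollary~\ref{thm:lic-lower-bound} applied to the GSI representation $c_j=a_jb$, $g_j=D_{a_j}\psi$ (with $\frac{1}{c_j}\abssmall{\hat g_j(\gamma)}^2=\frac{1}{b}\abssmall{\hat\psi(a_j\gamma)}^2$) gives the stated inequality. No gaps; the bookkeeping is the same as in the paper.
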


For other variants of GSI systems, covering the Gabor case, we also have the desired bounds of the
Calder\'on sum immediately from the frame property.
\begin{cor}
  Assume that $\gsi$ is a frame with lower bound $A>0$. If the sequence $\{c_j\}_{j\bbz}$ is
  bounded above, then
\[
A\leq  \sum_{j \in J} \frac{1}{c_j}\vert \hat{g}_j(\gamma)\vert^2  \qquad
\text{for a.e.} \, \gamma\in \R.\]
\end{cor}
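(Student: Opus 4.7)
The plan is to derive this corollary as an almost immediate consequence of the machinery already built up in the section, specifically Theorem~\ref{thm:loc-lowerbound} together with Proposition~\ref{prp:sum-over-cj-M}. The key observation is that a uniform upper bound on the dilation parameters $c_j$ trivializes the condition~\eqref{eq:loc-with-M}.

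First, I would note that any frame is a Bessel sequence, so Theorem~\ref{thm:HLW-Bessel-bound} gives a bound $B$ with
\[
\sum_{j\in J}\frac{1}{c_j}\abs{\hat{g}_j(\gamma)}^2 \le B \qquad \text{for a.e. } \gamma\in\R.
\]
Suppose $c_j\le M_0$ for all $j\in J$. Then for a.e. $\gamma$,
\[
\sum_{j\in J}\abs{\hat{g}_j(\gamma)}^2 \le M_0\sum_{j\in J}\frac{1}{c_j}\abs{\hat{g}_j(\gamma)}^2 \le M_0 B,
\]
so $\sum_{j\in J}\abs{\hat{g}_j(\cdot)}^2\in L^{\infty}(\R)\subset \Lloc(\R\setminus E)$ for every $E\in\cE$, in particular for $E=\emptyset$. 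Hence condition~\eqref{eq:loc} is satisfied. Alternatively, one can invoke Proposition~\ref{prp:sum-over-cj-M} by choosing $M=M_0$, for which the set $\set{j:c_j>M}$ is empty and the partial sum in~\eqref{eq:loc-with-M} is identically zero, hence trivially locally integrable.

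With~\eqref{eq:loc} established, Theorem~\ref{thm:loc-lowerbound} directly yields the desired inequality
\[
A\le \sum_{j\in J}\frac{1}{c_j}\abs{\hat{g}_j(\gamma)}^2 \qquad \text{for a.e. } \gamma\in\R.
\]
There is essentially no obstacle here: the statement is designed to be the simplest corollary of Theorem~\ref{thm:loc-lowerbound}, and the only ``work'' is the trivial interchange between the Bessel bound on the weighted Calder\'on sum and the unweighted one, which is precisely where the uniform upper bound on $\cj$ is used. If one wanted to extend the statement to a frame for a subspace $\check{L}^2(S)$, the same argument applies verbatim via Remark~\ref{rem:lower-bound-for-subpspace}.
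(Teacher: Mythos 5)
Your proposal is correct, and every step checks out: a frame is a Bessel sequence, Theorem~\ref{thm:HLW-Bessel-bound} bounds the Calder\'on sum by $B$, and the uniform bound $c_j\le M_0$ converts this into $\sum_{j\in J}\abs{\hat{g}_j(\cdot)}^2\le M_0B$ a.e., so condition~\eqref{eq:loc} holds with $E=\emptyset$ and Theorem~\ref{thm:loc-lowerbound} applies. The route differs slightly from the paper's: the paper does not verify~\eqref{eq:loc} at all but instead re-enters the \emph{proof} of Theorem~\ref{thm:loc-lowerbound}, observing that when all $c_j\le M$ one may take $\eps<\tfrac{1}{2}M^{-1}$ so that every cross term $K\cap(K-c_j^{-1}k)$, $k\neq 0$, is empty for \emph{all} $j\in J$; thus the tail term $S_2$ vanishes identically, no splitting into $J'$ and $J\setminus J'$ (and no $\eta$) is needed, and letting $\eps\to 0$ in~\eqref{eq:0.6} gives the bound directly. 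Your argument instead uses the theorem as a black box after checking its hypothesis, which is essentially Proposition~\ref{prp:sum-over-cj-M} specialized to the case where $\set{j: c_j>M_0}$ is empty, exactly as in your alternative remark. What the paper's internal argument buys is the observation that in this regime the local integrability machinery is not even needed (all off-diagonal terms vanish outright); what your version buys is a cleaner, fully self-contained deduction from stated results rather than from the inside of a proof, and it makes transparent why boundedness of $\cj$ trivializes~\eqref{eq:loc}. Your closing remark about $\check{L}^2(S)$ via Remark~\ref{rem:lower-bound-for-subpspace} is also consistent with the paper.
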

\begin{proof}
  Assume that there exists $M>0$ such that $0<c_j\leq M$, for all $j\bbz$. Using the proof of
  Theorem~\ref{thm:loc-lowerbound}, we have $I_1=\Z$ and $I_2=\emptyset$. Hence by letting $\eps\rightarrow
  0$ in~\eqref{eq:0.6}, we have the result.
\end{proof}

\section{Constructing dual GSI frames}
\label{sec:constr-dual-gsi-frames}

We now turn to the question of how to obtain dual pairs of frames.  Indeed, we present a flexible construction procedure that
yields dual GSI frames for $\check{L}^2(S)$, where $S \subset \R$ is
any countable collection of frequency bands. The precise choice of
$S$ depends on the application; we refer to
\cite{MR2854065} for an implementation and applications of GSI systems within
audio signal processing. Our construction relies on a certain
partition of unity, closely related to the Calder\'on sum, and unifies
similar constructions of dual frames with Gabor, wavelet, and
Fourier-like structure in
\cite{MR2481501,Lemvig2012,MR2407863,MR2587577,MR3061703}. Due to its generality
the method will be technically involved; however, we will show that we nevertheless
are able to extract the interesting cases from the general setup.

Methods in wavelet theory and Gabor analysis, respectively, share many
common features. However, the decomposition into frequency bands is
very different for the two approaches.  To handle these differences in
one unified construction procedure, we need a very flexible setup. In
order to motivate the setup, we first consider the Gabor case and
the wavelet case more closely in the following example.

\begin{ex}
\label{exa:setup-1}
  For a GSI system $\gsi$ to be a frame for $\check{L}^2(S)$ it is necessary that
  the union of the sets $\supp \hat{g}_j$, $j \in J$, covers the  frequency domain $S$. This is an easy
  consequence of Remark~\ref{rem:lower-bound-for-subpspace} and the fact that the Fourier transform of $\gsi$ is
  $\setsmall{E_{c_j k}\hat{g}_j}_{k\in \Z,j\in J}$. For simplicity we
  here consider only $S=\R$.

  As we later want to apply the construction procedure for wave packet
  systems, it is necessary that the setup covers as well constructions of
  bandlimited dual wavelet as dual Gabor frames. For wavelet systems
  take the dyadic Shannon wavelet for $L^2(\R)$ as an example.  In
  this case, we split the frequency domain $S=\R$ in two sets
  $S_0=\itvoc{-\infty}{0}$ and $S_1=\itvoo{0}{\infty}$.
  The support of the dilates of the Shannon wavelet is $\supp
  \hat{g}_j=\supp D_{2^{-j}}\hat{\psi}=\itvcc{-2^{-j}}{-2^{-j-1}} \cup
  \itvcc{2^{-j-1}}{2^{-j}}$. To control the support of $\hat{g}_j$ we
  will define certain knots at the dyadic fractions $\set{\pm 2^j}_{j \in
    \Z}$. Observe that $\setprop{\supp \hat{g}_j}{j \in J}$ covers the
  frequency line $\R$. However, in addition, we also need to control
  in which order this covering is done.  On $S_0$ the support of
  $\hat{g}_j$ is moved to the left with increasing $j \in \Z$, while
  $\supp \hat{g}_j$ on $S_1$ is moved to the right.  To handle $S_0$
  and $S_1$ in the same setup, we introduce auxiliary functions
  $\varphi_i$, $i=0,1$,  to allow for a change of how we cover the
  frequency set $S_i$ with $\supp
  \hat{g}_j$.
  If we consider knots $\xi_j^{(0)}=-2^{-j}$ and $\xi_j^{(1)}=2^{j-1}$ for $j \in
  \Z$ and define two bijective functions on $\Z$,
  $\varphi_0=\mathrm{id}$ and $\varphi_1=-\mathrm{id}$, then we have
  \begin{equation}\label{wav}
    \supp \hat{g}_j \subset \itvcc{\xi_{\varphi_{0}(j)}^{(0)}}{\xi_{\varphi_{0}(j)+1}^{(0)}} \cup \itvcc{\xi_{\varphi_{1}(j)}^{(1)}}{\xi_{\varphi_{1}(j)+1}^{(1)}}
  \end{equation}
  The key point is that, for both $i=0$ and $i=1$, the
  contribution of $\supp\hat{g}_j$ in $S_i$ can be written uniformly as   $\itvccs{\xi_{\varphi_{i}(j)}^{(i)}}{\xi_{\varphi_{i}(j)+1}^{(i)}}$.
  	
  For Gabor systems the situation is simpler. Consider the Gabor-like
  orthonormal basis $\setsmall{T_k g_j}_{j,k\in \Z}$, where $g_j=E_j
  g$, $j \in \Z$, with $g\in L^2(\R)$ defined by $\hat g =
  \chi_{\itvcc{0}{1}}$. Hence, we only need one set $S_0=\R$ with
  knots $\Z$. Here, $\varphi_0$ is the identity on $\Z$.

We remark that, in most applications, each $\varphi_i$ will
  be an affine map of the form $z\mapsto a z+b$, $a,b\in \Z$. The
  choice of the
  knots $\xi^{(i)}_k$ is usually not unique, but is simply chosen to match the support
  of $\hat{g}_j$.
\end{ex}

Motivated by the concrete cases in Example \ref{exa:setup-1} we now formulate
the general setup as follows:
\begin{enumerate}[I)]
\item Let $S \subset \R$ be an at most countable collection of
  disjoint intervals
  \[
  S= \bigcup_{i \in I} S_i,
  \]
  where $I \subset \Z$. We write $S_i= \itvoc{\alpha_i}{\beta_i}$ with
  the convention that $\alpha_i=-\infty$ if
  $S_i=\itvoc{-\infty}{\beta_i}$, $\beta_i=\infty$ if
  $S_i=\itvoc{\alpha_i}{\infty}$, and $\alpha_i=-\infty$ and
  $\beta_i=\infty$ if $S_i=\R$. We assume an ordering of
  $\set{S_i}_{i\in I}$ so that $\beta_i \le \alpha_j$ whenever $i <
  j$. For each $i \in I$ we consider a sequence of knots \begin{equation} \label{60926a}  
   \seqsmall{\xi^{(i)}_k}_{k \in \Z}
  \subset S_i,\end{equation}  such that
  \[
  \lim_{k \to -\infty} \xi^{(i)}_k=\alpha_i, \quad \lim_{k \to \infty}
  \xi^{(i)}_k=\beta_i, \quad \xi^{(i)}_k \le \xi^{(i)}_{k+1}, \; k \in
  \Z.
  \] \vspace*{-1.5em}
\label{enu:S}
\item Let $c_j>0$. For each $j\in J$ we take $g_j \in L^2(\R)$ such
  that $\hat{g}_j$ is a bounded, real function with compact support in a finite
  union of the sets $S_i$, $i \in I$. We further assume that
  $\{c_j^{-1/2}\hat{g}_j\}_{j\in J}$ are uniformly bounded, i.e.,
  $\sup_j \normsmall[\infty]{c_j^{-1/2}\hat{g}_j}<\infty$.
\item For each $j \in J$, define the index set $I_j$ by
  \[
  I_j = \setprop{i \in I}{\hat{g}_j \neq 0 \text{ on } S_i}.
  \]
  On the other hand, for each $i \in I$, we fix an index set $J_i
  \subset J$ such that
  \[
  \setprop{j \in J}{\hat{g}_j \neq 0 \text{ on $S_i$}} \subset J_i .
  \]
  Note that $i \in I_j$ implies that $j\in J_i$.  Assume further that there is a bijective mapping
  $\varphi_i:J_i \to \Z$ such that, for each $j \in J$,
  \begin{equation}
  \supp \hat{g}_j \subset \bigcup_{i \in I_j}
  \itvcc{\xi^{(i)}_{\varphi_i(j)}}{\xi^{(i)}_{\varphi_i(j)+N}}\label{eq:supp-hat-gj}
\end{equation}
  for some $N \in \N$. Often, we take $J_i$ to be equal
  to the index set of the ``active'' generators $g_j$ on the interval
  $S_i$, that is, $J_i=\setprop{j \in J}{\hat{g}_j \neq 0 \text{ on
      $S_i$}}$, but this need not be the case, e.g., if $\setprop{j \in J}{\hat{g}_j \neq 0 \text{ on
      $S_i$}}$ is a finite set.
\label{item:hat-gj-support-assump}
\end{enumerate}

In the final step of our setup we have only left to define the dual
generators.
\begin{enumerate}[I)]
\setcounter{enumi}{3}
\item
 Let $h_j \in \check{L}^2(S)$ be given
by
\begin{equation}
\hat{h}_j(\gamma) =
\begin{cases}
  \sum_{n = -N+1}^{N-1} a^{(i)}_{\varphi_i(j),n} \;
  \hat{g}_{\varphi_i^{-1}(\varphi_i(j)+n)}(\gamma) \qquad &\text{for }
  \gamma \in S_i, i \in I_j \\
 0 & \text{for }
  \gamma \in S_i, i \notin I_j \\
\end{cases}
\label{eq:def-hj}
\end{equation}
where $\{ a^{(i)}_{k,n}\}_{i\in I,k \in \Z, n \in
\set{-N+1,\dots,N-1}}$ will be specified later.
\label{enu:h_j}
\end{enumerate}

\setcounter{ex}{2}
\begin{ex}[continuation]
  For a rigorous introduction of $\varphi_i$ in
  Example~\ref{exa:setup-1} above, we need to specify the sets $J_i$.
  In the wavelet case, as all dilates $D_{2^{-j}}\hat{\psi}$ have support intersecting \emph{both}
  $S_0$ and $S_1$, the active sets $J_i$, $i=0,1$, correspond to all
  dilations, that is, $\Z$. In the Gabor case, one easily also
  verifies that the active set $J_0$ is $\Z$.
\end{ex}

We are now ready to present the construction of dual GSI frames.  Recall from \eqref{60926a}
that the superscript $(i)$ on the points $\xi_k^{(i)}$ refer to the set $S_i.$  

\begin{thm}
\label{thm:GSI-dual-frames-1D-freq}
Assume the general setup \ref{enu:S}--\ref{enu:h_j}. Suppose that
\begin{equation}
\sum_{j \in J} c_j^{-1/2} \hat{g}_j(\gamma) =\chi_S(\gamma) \qquad
\text{for a.e. $\gamma \in \R$.}
\label{eq:part-unity-cond}
\end{equation}
Suppose further that $c_j\le 1/M_j$, where
\begin{equation}
M_j = \max\set{\xi_{\varphi_{\mathrm{max}I_j}(j)+2N}^{(\mathrm{max}I_j)}-\xi_{\varphi_{\mathrm{min}I_j}(j)}^{(\mathrm{min}I_j)}, \xi_{\varphi_{\mathrm{max}I_j}(j)+N}^{(\mathrm{max}I_j)}-\xi_{\varphi_{\mathrm{min}I_j}(j)-N}^{(\mathrm{min}I_j)} },\label{eq:def-M_j}
\end{equation}
and that $\{a^{(i)}_{k,n}\}_{k \in\Z, n\in\set{-N+1,\dots,N-1},i\in I}$ is a bounded sequence satisfying
\begin{equation}
  \label{eq:a_pl-c_j-cond}
  a^{(i)}_{\varphi_i(j),0} = 1  \quad \text{and} \quad
  \left(\frac{c_{\varphi_i^{-1}(\varphi_i(j)+n)}}{c_j}\right)^{1/2} a^{(i)}_{\varphi_i(j),n} +
  \left(\frac{c_{j}}{c_{\varphi_i^{-1}(\varphi_i(j)+n)}}\right)^{1/2}  a^{(i)}_{\varphi_i(j)+n,-n} = 2 ,
\end{equation}
for $n\in \set{1,\dots,N-1}$, $j\in J_i$, and $i \in I$. Then $\gsiZ$ and
$\gsiZ[h]$ are a pair of dual frames for $\check{L}^2(S)$.
\end{thm}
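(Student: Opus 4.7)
My plan is to apply the duality characterization Theorem~\ref{thm:HLW}. Three ingredients need verification for $\gsiZ$ and $\gsiZ[h]$: (i) both are Bessel sequences in $\check{L}^2(S)$, (ii) they jointly satisfy the dual $\alpha$-LIC (for $E=\emptyset$, say), and (iii) the characterizing equations~\eqref{eq:char-eqns-dual} hold for every $\alpha \in \bigcup_{j} c_j^{-1}\Z$.

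For (i), the Bessel property of $\gsiZ$ follows directly from Theorem~\ref{thm:suff-cond-for-A-and-B-bound}(i): the hypothesis $c_j \le 1/M_j$ together with the support condition~\eqref{eq:supp-hat-gj} forces $\hat g_j(\gamma)\hat g_j(\gamma + m/c_j) = 0$ a.e.\ for every $m \ne 0$ (since $|m|/c_j \ge M_j$ dominates the diameter $\xi^{(\max I_j)}_{\varphi_{\max I_j}(j)+N}-\xi^{(\min I_j)}_{\varphi_{\min I_j}(j)}$ of $\supp \hat g_j$), and the remaining diagonal sum $\sum_j c_j^{-1}|\hat g_j|^2$ is bounded because at each $\gamma$ at most $N$ terms are non-zero and $\{c_j^{-1/2}\hat g_j\}$ is uniformly bounded. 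For $\gsiZ[h]$, the same Bessel-style estimate is obtained by expressing $\hat h_j$ as a bounded linear combination of at most $2N-1$ functions from the collection $\{\hat g_{j'}\}$ via~\eqref{eq:def-hj} and transferring the estimate for $g$; the dual $\alpha$-LIC of (ii) then follows because for $f\in \cD$ with compact support only finitely many $(j,m)$ contribute nontrivially to the integrand of $L'(f)$ at each $\gamma \in \supp \hat f$.

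The substance of the proof is (iii). If $\alpha \ne 0$, any $j$ with $\alpha \in c_j^{-1}\Z$ satisfies $|\alpha| \ge 1/c_j \ge M_j$, and the analogous support argument (now comparing $\supp \hat g_j$ with a translate of $\supp \hat h_j$) forces $\overline{\hat g_j(\gamma)}\hat h_j(\gamma+\alpha) = 0$ a.e. The two entries in the maximum defining $M_j$ in~\eqref{eq:def-M_j} are precisely calibrated for the cases $\alpha > 0$ and $\alpha < 0$, respectively. For $\alpha = 0$, fix $\gamma \in S_i$ and substitute~\eqref{eq:def-hj} into the left-hand side of~\eqref{eq:char-eqns-dual}. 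Reindexing by $k = \varphi_i(j)$ recasts $\sum_j c_j^{-1}\hat g_j(\gamma)\hat h_j(\gamma)$ as
\begin{equation*}
\sum_{k\in\Z}\sum_{n=-N+1}^{N-1} \frac{a^{(i)}_{k,n}}{c_{\varphi_i^{-1}(k)}}\,\hat g_{\varphi_i^{-1}(k)}(\gamma)\,\hat g_{\varphi_i^{-1}(k+n)}(\gamma).
\end{equation*}
Pairing $(k,n)$ with $(k+n,-n)$ for $n > 0$ and applying~\eqref{eq:a_pl-c_j-cond} in the rewritten form
\begin{equation*}
\frac{a^{(i)}_{k,n}}{c_{\varphi_i^{-1}(k)}} + \frac{a^{(i)}_{k+n,-n}}{c_{\varphi_i^{-1}(k+n)}} = \frac{2}{(c_{\varphi_i^{-1}(k)}\,c_{\varphi_i^{-1}(k+n)})^{1/2}},
\end{equation*}
together with the diagonal identity $a^{(i)}_{k,0} = 1$, the sum collapses into $\sum_{j,l \in J_i}(c_jc_l)^{-1/2}\hat g_j(\gamma)\hat g_l(\gamma) = \bigl(\sum_{j \in J_i} c_j^{-1/2}\hat g_j(\gamma)\bigr)^2$, which by~\eqref{eq:part-unity-cond} equals $\chi_{S_i}^2(\gamma) = \chi_{S_i}(\gamma)$. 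Cross-terms with $|\varphi_i(l)-\varphi_i(j)| \ge N$ vanish automatically by~\eqref{eq:supp-hat-gj}, so the apparent truncation to $|n| \le N-1$ is no restriction.

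The main obstacle is the bookkeeping in the $\alpha = 0$ collapse: one must check that every unordered pair $\{j,l\}\subset J_i$ arising in the expanded double sum is counted exactly once under the pairing, and that~\eqref{eq:a_pl-c_j-cond} is precisely the algebraic identity needed to convert each such pair into the cross-term $2\hat g_j\hat g_l/(c_jc_l)^{1/2}$ of the squared partition of unity~\eqref{eq:part-unity-cond}.
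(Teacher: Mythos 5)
Your proposal is correct and follows essentially the same route as the paper: reduce to Theorem~\ref{thm:HLW}, kill all off-diagonal terms via the support bound $c_j^{-1}\ge M_j$, verify the Bessel and dual $\alpha$-LIC conditions from the uniform bounds and finite overlap, and obtain the diagonal equation from the squared partition of unity together with the coefficient condition~\eqref{eq:a_pl-c_j-cond}. The only cosmetic difference is that you expand $\sum_j c_j^{-1}\hat g_j\hat h_j$ and collapse it to the square, whereas the paper expands the square and refactors it into $\sum_j c_j^{-1}\hat g_j\hat h_j$; the algebra is identical.
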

\begin{proof} First, note that for each $j\in J$, we have  $j\in J_{\mathrm{max}I_j}\cap J_{\mathrm{min}I_j}$ and therefore that $M_j$ in \eqref{eq:def-M_j} is
well-defined.

Now, note that by assumption~\eqref{eq:supp-hat-gj} and the definition
in~\eqref{eq:def-hj},
  \[
  \supp \hat{g}_j \subset \itvcc{\xi_{\varphi_{\mathrm{min}I_j}(j)}^{(\mathrm{min}I_j)}}{\xi_{\varphi_{\mathrm{max}I_j}(j)+N}^{(\mathrm{max}I_j)}}
  \]
  and
  \[
  \supp \hat{h}_j \subset \itvcc{\xi_{\varphi_{\mathrm{min}I_j}(j)-N}^{(\mathrm{min}I_j)}}{\xi_{\varphi_{\mathrm{max}I_j}(j)+2N}^{(\mathrm{max}I_j)}},
  \]
 where the constant $N$ is given by assumption \ref{item:hat-gj-support-assump}.
  Thus, if $j \in J$ and $0\neq m \in \Z$, then
  $\hat{g}_j(\gamma)\hat{h}_j(\gamma+c_j^{-1}m)=0$ for a.e.\ $\gamma\in\R$
  since $c_j^{-1}\ge M_j$ for each $j \in J$. Therefore, by
  Theorem~\ref{thm:HLW}, we only need to show that $\gsi$ and
  $\gsi[h]$ are Bessel sequences, satisfy the dual $\alpha$-LIC and that
  \begin{equation}
    \sum_{j \in {J}} c_j^{-1} \overline{ \hat{g}_j(\gamma)} \hat{h}_j(\gamma) = \chi_S(\gamma) \qquad \text{for a.e. } \gamma
    \in \R.\label{eq:-diag-eqn}
  \end{equation}
  holds.

  Choose $K>0$ so that $\frac{1}{\sqrt{c_j}} \abs{\hat{g}_j(\gamma)}\le K$
  for all $j \in J$ and a.e.\ $\gamma \in S$. For a.e. $\gamma \in S$
  we have
  \[
  \sum_{j\in J} \sum_{m \in \Z} \frac{1}{c_j} \absbig{
    \hat{g}_j(\gamma) \hat{g}_j(\gamma+c_j^{-1}m)} = \sum_{j\in J}
  \frac{1}{c_j} \absbig{\hat{g}_j(\gamma)}^2 \le K \sum_{j\in J}
  \frac{1}{c_j^{1/2}} \abs{\hat{g}_j(\gamma)} \le N K^2,
  \]
 where the last inequality follows from the fact that at most $N$
 functions from $\{\hat{g}_j:j \in J\}$ can be nonzero on a given
 interval $\itvccs{\xi^{(i)}_k}{\xi^{(i)}_{k+1}}$. It now follows from Theorem~\ref{thm:suff-cond-for-A-and-B-bound}(i)
  that $\gsi$ is a Bessel sequence in $\check{L}^2(S)$ with bound $NK^2$.

 A simple argument shows that if  $\{a^{(i)}_{k,n}\}_{k \in\Z,
   n\in\set{-N+1,\dots,N-1},i\in I}$ is a bounded sequence, then each
 term on the left-hand side of \eqref{eq:a_pl-c_j-cond} must be
 bounded with respect to $i\in I, j\in J_i$ and
 $n=-N+1,\cdots,N-1$. Let $M>0$ be such a bound. Then,
 for each $i\in I$ and $\gamma \in S_i$, we have
    \begin{align*}
    c_j^{-1/2}|\hat{h}_j (\gamma)|
    &\leq    \sum_{n = -N+1}^{N-1} |c_j^{-1/2}a^{(i)}_{\varphi_i(j),n} \; \hat{g}_{\varphi_i^{-1}(\varphi_i(j)+n)}(\gamma)|\\
    &\leq K  \sum_{n = -N+1}^{N-1}
    \abs{\frac{c_{\varphi_i^{-1}(\varphi_i(j)+n)}}{c_j}}^{1/2} \abs{a^{(i)}_{\varphi_i(j),n}}
    \leq KM(2N-1)=:L.
        \end{align*}
  Hence, the sequence of functions
  $\{c_j^{-1/2}\hat{h}_j\}_{j\in J}$ is  uniformly bounded by $L$.

For the remainder of the proof, we let $i \in I$ and $j \in J_i$ be fixed, but arbitrary.
 Note that the functions $\hat{g}_{\varphi_i^{-1}(\varphi_i(j)+n)}$, $n=0,1,\dots, N-1$,
  are the only nonzero generators $\set{\hat{g}_k}_{k \in J}$ on
  $\itvccs{\xi^{(i)}_{\varphi_i(j)+N-1}}{\xi^{(i)}_{\varphi_i(j)+N}}$.
 Hence, for $\hat{h}_{\varphi_i^{-1}(\varphi_i(j)+l)}$ only
 $l=-N+1,\cdots,2N-2$ can be nonzero on
 $\itvccs{\xi^{(i)}_{\varphi_i(j)+N-1}}{\xi^{(i)}_{\varphi_i(j)+N}}$. Thus,
 for
 $\gamma \in \itvccs{\xi^{(i)}_{\varphi_i(j)+N-1}}{\xi^{(i)}_{\varphi_i(j)+N}}$, we have
  \begin{align*}
  \sum_{j\in J}\sum_{m\in\Z}c_k^{-1}\absbig{\hat{h}_j (\gamma)\hat{h}_j (\ga+c_j^{-1}m)}&=
\sum_{j\in J}\sum_{m=-1}^1 c_j^{-1}\absbig{\hat{h}_j (\gamma)\hat{h}_j (\ga+c_j^{-1}m)}\\
&\leq 3L\sum_{j\in J}c_j^{-1/2}\absbig{\hat{h}_j (\gamma)}\\
  &=3L\sum_{\ell=-N+1}^{2N-2}
  c_{\varphi_i^{-1}(\varphi_i(j)+\ell)}^{-1/2}
  \absbig{\hat{h}_{\varphi_i^{-1}(\varphi_i(j)+\ell)}(\gamma)}\\
  &\leq 3L^2(3N-2).
    \end{align*}
It follows from Theorem~\ref{thm:suff-cond-for-A-and-B-bound}(i)
  that $\gsi[h]$ is also a Bessel sequence.
  Similar computations show that the $\alpha$-LIC holds:
  \begin{align*}
    \sum_{j\in J} \sum_{m \in \Z} \frac{1}{c_j} \absbig{
      \hat{g}_j(\gamma) \hat{h}_j(\gamma+c_j^{-1}m)} &= \sum_{j\in J}
    \frac{1}{c_j} \absbig{\hat{g}_j(\gamma)\hat{h}_j(\gamma)} \\ &\le
    \left(\sum_{j\in J} \frac{1}{c_j}
      \absbig{\hat{g}_j(\gamma)}^2\right)^{1/2} \left(\sum_{j\in J}
      \frac{1}{c_j} \absbig{\hat{h}_j(\gamma)}^2\right)^{1/2} \\ &\le
    N^{1/2}K B^{1/2},
  \end{align*}
  where $B$ is a Bessel bound for $\gsi[h]$.

  Finally, we need to show that \eqref{eq:-diag-eqn} holds.
 Set
  $\tilde{g}_k=c_k^{-1/2}g_k$, $k \in J$, and
  $\ell_n=\varphi_i^{-1}(\varphi_i(j)+n)$. Then, for a.e. $\gamma\in
  \itvccs{\xi^{(i)}_{\varphi_i(j)}}{\xi^{(i)}_{\varphi_i(j)+1}}$.
  \begin{align*}
    1&=\left(\sum_{k\in J}\hat{\tilde{g}}_k(\gamma)\right)^2=\left(\sum_{n=0}^{N-1} \hat{\tilde{g}}_{\ell_{n}}(\gamma)\right)^2 \\
    &= \hat{\tilde{g}}_{\ell_0}(\gamma) \left[ \hat{\tilde{g}}_{\ell_0}(\gamma) + 2\hat{\tilde{g}}_{\ell_{1}}(\gamma) + 2\hat{\tilde{g}}_{\ell_{2}}(\gamma) + \dots + 2\hat{\tilde{g}}_{\ell_{N-1}}(\gamma)\right] \\
    &+ \hat{\tilde{g}}_{\ell_{1}}(\gamma) \left[\phantom{\hat{\tilde{g}}_{\ell_0}(\gamma)+}\! \hat{\tilde{g}}_{\ell_{1}}(\gamma) + 2\hat{\tilde{g}}_{\ell_{2}}(\gamma) +\dots + 2\hat{\tilde{g}}_{\ell_{N-1}}(\gamma)\right] \\
    &+ \cdots \\
    &+ \hat{\tilde{g}}_{\ell_{N-1}}(\gamma) \left[\phantom{\hat{\tilde{g}}_{\ell_0}(\gamma)+ \hat{\tilde{g}}_{\ell_{1}}(\gamma)+2\hat{\tilde{g}}_{\ell_{2}}(\gamma) +}\quad \hat{\tilde{g}}_{\ell_{N-1}}(\gamma)\right].
  \end{align*}
  Clearly, each mixed term in this sum has coefficient $2$, e.g., $2\hat{\tilde{g}}_{\ell_n}(\gamma)\hat{\tilde{g}}_{\ell_m}(\gamma)$ whenever $n\neq
  m$. Replacing $\hat{\tilde{g}}_{\ell_n}$ with $c_{\ell_n}^{-1/2}\hat{g}_{\ell_n}$  yields a mixed
  term with coefficient $2c_{\ell_n}^{-1/2}c_{{\ell_m}}^{-1/2}\hat{g}_{\ell_n}(\gamma)\hat{g}_{\ell_m}(\gamma)$.
 By \eqref{eq:a_pl-c_j-cond}, we have
  \[ a_{\varphi_i(j),0}=1 \quad \text{and} \quad
  c_j^{-1} a^{(i)}_{\varphi_i(j),n} +
  c_{\varphi_i^{-1}(\varphi_i(j)+n)}^{-1}  a^{(i)}_{\varphi_i(j)+n,-n} = 2 c_j^{-1/2} c_{\varphi_i^{-1}(\varphi_i(j)+n)}^{-1/2},
 \]
  for $\ell=1,\dots,N-1, j\in J$. Hence, we can factor the sum
in the following way:
  \begin{align*}
    1&= c_{{\ell_0}}^{-1} \hat{g}_{\ell_0}(\gamma) \left[a_{{\ell_0},0}  \,\hat{g}_{\ell_0}(\gamma) +
      a_{{\ell_0},1} \,\hat{g}_{\ell_{1}}(\gamma) + \dots + 
    a_{{\ell_0},N-1} \,\hat{g}_{\ell_{N-1}}(\gamma)\right] \\
    &+ c_{\ell_{1}}^{-1} \hat{g}_{\ell_{1}}(\gamma) \left[
      a_{\ell_{1},-1}\,\hat{g}_{{\ell_0}}(\gamma)+ a_{\ell_{1},0}
      \,\hat{g}_{\ell_{1}}(\gamma) + \dots + a_{\ell_{1},N-2} \,\hat{g}_{\ell_{N-1}}(\gamma) \right] \\
    &+ \cdots \\
    &+ c_{\ell_{N-1}}^{-1} \hat{g}_{\ell_{N-1}}(\gamma)
    \left[a_{\ell_{N-1},-N+1} \,\hat{g}_{{\ell_0}}(\gamma) 
      + \dots + a_{\ell_{N-1},0}  \,\hat{g}_{\ell_{N-1}}(\gamma)\right] \\ &= \sum_{n=0}^{N-1} c_{\varphi_i^{-1}(\varphi_i(j)+n)}^{-1}
    \hat{g}_{\varphi_i^{-1}(\varphi_i(j)+n)}(\gamma)\hat{h}_{\varphi_i^{-1}(\varphi_i(j)+n)}(\gamma)= \sum_{k \in {J}} c_k^{-1} \hat{g}_k(\gamma) \hat{h}_k(\gamma)
  \end{align*}
  for a.e. $\gamma \in \itvccs{\xi^{(i)}_j}{\xi^{(i)}_{j+1}}$. Since $i \in
  I$ and $j\in J_i$ were arbitrary, the proof is complete.
\end{proof}

\begin{rem}
A few comments on the definition of $M_j$ are in place. Firstly, the only
feature of $M_j$ is to guarantee that $\hat{g}_j$ and $\hat{h}_j(\cdot
+ c^{-1}_jk)$ has non-overlapping supports for all $k \in \Z\setminus
\{0\}$.

Secondly, it is possible to make a sharper choice of $M_j$ in
Theorem~\ref{thm:GSI-dual-frames-1D-freq}. Indeed,
  let $n_{j,\mathrm{max}} =
\max\setpropsmall{n}{a^{(\mathrm{max}I_j)}_{\varphi_{\mathrm{max}I_j}(j),n} \neq 0}$ and let
$n_{j,\mathrm{min}}=\min\setpropsmall{n}{a^{(\mathrm{min}I_j)}_{\varphi_{\mathrm{min}I_j}(j),n}
  \neq 0}$. We then take:
\begin{equation}
M_j =
\max\set{\xi_{\varphi_{\mathrm{max}I_j}(j)+n_{j,\mathrm{max}}+N}^{(\mathrm{max}I_j)}-\xi_{\varphi_{\mathrm{min}I_j}(j)}^{(\mathrm{min}I_j)},
  \xi_{\varphi_{\mathrm{max}I_j}(j)+N}^{(\mathrm{max}I_j)}-\xi_{\varphi_{\mathrm{min}I_j}(j)+n_{j,\mathrm{min}}}^{(\mathrm{min}I_j)}
}.\label{eq:Mj-optimal}
\end{equation}
\end{rem}

For wavelet systems Theorem~\ref{thm:GSI-dual-frames-1D-freq} reduces to the construction from
\cite{MR2481501,Lemvig2012} of dual
wavelet frames in $L^2(\R)$.
\begin{cor}
  Let $a>1$ and $\psi \in L^2(\R)$. Suppose that $\hat\psi$ is  real-valued, that
  $\supp \hat\psi \subseteq\itvcc{-a^M}{-a^{M-N}} \cup \itvcc{a^{M-N}}{a^{M}}$ for some $M \in \Z, N \in
  \N$ and that
  \begin{equation}
\sum_{j \in \Z} \hat{\psi}(a^j\gamma) =1 \qquad \text{for a.e. \ $\gamma \in \R$}.\label{eq:part-unity-wavelets}
\end{equation}
Take $b_{n}\in \Bbb C$, $n=-N+1,\dots, N-1$, satisfying
\[
b_0 = 1 , \quad b_{-n}+b_{n} =2, \: n=1,2, \dots, N-1,
\]
and set $\ell:=\max\setprop{n}{b_n\neq 0}$. Let $b \in \itvoc{0}{a^{-M}(1+a^{\ell})^{-1}}$.
Then the function $\psi$ and the function $\tilde{\psi} \in L^2(\R)$
defined by
\begin{equation}\label{1404a}
\hat{\tilde{\psi}}(\gamma)= b \sum_{n=-N+1}^{N-1} b_n\, \hat \psi(a^{-n} \gamma) \qquad \text{for a.e. } \gamma\in \R
\end{equation}
generate dual frames $\{D_{a^j}T_{bk}\psi\}_{j,k\in\Z}$ and $\{D_{a^j}T_{bk}\tilde{\psi }\}_{j,k\in\Z}$ for $L^2(\R)$.
\end{cor}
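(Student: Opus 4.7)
The strategy is to apply Theorem~\ref{thm:GSI-dual-frames-1D-freq} to the wavelet system viewed as the GSI system $\{T_{c_j k} g_j\}_{j,k}$ with $c_j = a^j b$ and $g_j = D_{a^j}\psi$, as in \eqref{eq:0803c}. I decompose $S=\R$ into $S_0 = (-\infty, 0]$ and $S_1 = (0, \infty)$ with knots $\xi_k^{(0)} = -a^{-k}$ and $\xi_k^{(1)} = a^k$ for $k \in \Z$, use the bijections $\varphi_0(j)=j-M$ and $\varphi_1(j)=M-N-j$, and take $J_0 = J_1 = \Z$; the support assumption on $\hat\psi$ then translates into \eqref{eq:supp-hat-gj} with this $N$.

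To meet the partition of unity \eqref{eq:part-unity-cond} I rescale the generators to $g_j := b^{1/2}D_{a^j}\psi$, since then $c_j^{-1/2}\hat g_j(\gamma) = \hat\psi(a^j\gamma)$, and the assumption \eqref{eq:part-unity-wavelets} becomes exactly $\sum_j c_j^{-1/2}\hat g_j = \chi_S$. Define the coefficient arrays $a_{k,n}^{(1)} := a^{n/2}b_n$ and $a_{k,n}^{(0)} := a^{-n/2}b_{-n}$. The normalization $a^{(i)}_{\varphi_i(j),0}=1$ in \eqref{eq:a_pl-c_j-cond} reduces to $b_0=1$, while (using $c_{j-n}/c_j = a^{-n}$ on $S_1$ and $c_{j+n}/c_j = a^n$ on $S_0$) the second identity in \eqref{eq:a_pl-c_j-cond} simplifies on both sides to $b_n + b_{-n}=2$, both holding by assumption. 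Plugging these coefficients into \eqref{eq:def-hj} yields, consistently on $S_0$ and $S_1$, the dual generator $h_j = b^{-1/2}D_{a^j}\tilde\psi$ with $\tilde\psi$ given by \eqref{1404a}.

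The main obstacle is the bandwidth condition $c_j \leq 1/M_j$. The generic $M_j$ from \eqref{eq:def-M_j} would give only the weaker bound $b\le a^{-M}(1+a^N)^{-1}$, so I use instead the sharper choice \eqref{eq:Mj-optimal} from the remark following Theorem~\ref{thm:GSI-dual-frames-1D-freq}. Since $a_{k,n}^{(1)}\neq 0$ iff $b_n\neq 0$ and $a_{k,n}^{(0)}\neq 0$ iff $b_{-n}\neq 0$, we get $n_{j,\max}=\ell$ and $n_{j,\min}=-\ell$, and a short computation yields $M_j = a^{M-j}(1+a^\ell)$. Thus $c_j\le 1/M_j$ is equivalent to $b\le a^{-M}(1+a^\ell)^{-1}$, which is precisely the hypothesis on $b$.

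Applying Theorem~\ref{thm:GSI-dual-frames-1D-freq} now gives that $\{T_{c_jk}g_j\} = \sqrt{b}\,\{D_{a^j}T_{bk}\psi\}$ and $\{T_{c_jk}h_j\} = b^{-1/2}\,\{D_{a^j}T_{bk}\tilde\psi\}$ form a pair of dual frames for $\check{L}^2(S)=L^2(\R)$. Since the scalar factors $\sqrt{b}$ and $b^{-1/2}$ cancel in the reproducing formula, the unscaled pair $\{D_{a^j}T_{bk}\psi\}_{j,k\in\Z}$ and $\{D_{a^j}T_{bk}\tilde\psi\}_{j,k\in\Z}$ is also a pair of dual frames for $L^2(\R)$, as claimed.
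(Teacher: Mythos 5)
Your proposal is correct and follows essentially the same route as the paper's own proof: rescaling the generator by $\sqrt{b}$ to obtain the partition of unity \eqref{eq:part-unity-cond}, splitting $\R$ into $S_0=\itvoc{-\infty}{0}$ and $S_1=\itvoo{0}{\infty}$ with (up to reindexing) the same knots and bijections, choosing the coefficients $a^{(i)}_{k,n}$ from the $b_n$, and invoking the sharper bound \eqref{eq:Mj-optimal} to get $M_j=a^{M-j}(1+a^{\ell})$ and hence the stated range of $b$. The only differences are cosmetic reparametrizations of the knots and the $\varphi_i$, and your bookkeeping of $b_n$ versus $b_{-n}$ actually matches formula \eqref{1404a} exactly.
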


\begin{proof}
Let $\phi=\sqrt{b}\psi$.
   We consider the wavelet system $\{D_{a^j}T_{bk}\phi\}_{j,k\in\Z}$ as a GSI system with
  $c_j=a^{j}b$ and $g_j=D_{a^{j}}\phi$ for $ j \in J=\Z$. We apply
  Theorem~\ref{thm:GSI-dual-frames-1D-freq} with $S_{0}=\itvoc{-\infty}{0}$ and
  $S_1=\itvoo{0}{+\infty}$, $\xi^{(0)}_j=-a^{M-j}$ and $\xi^{(1)}_j=a^{M-N+j}$ for $j \in \Z$. The
  assumption \eqref{eq:part-unity-cond} corresponds to
\[
\sum_{j \in \Z} \hat{\phi}(a^j\gamma) =\sqrt{b} \qquad \text{for } \gamma \in \R
\]
which is satisfied by \eqref{eq:part-unity-wavelets}.
Let $a^{(0)}_{j,n}=a^{-n/2}b_n$ and $a^{(1)}_{j,n}=a^{n/2}b_{-n}$ for  $n=-N+1,\dots, N-1$ and $j\in\Z$ and define $\varphi_0,\varphi_1:\Z\to\Z$ with  $\varphi_0(j)=j$ and $\varphi_1(j)=-j$.
The definition of $h_j$ in \eqref{eq:def-hj} reads
\[
\hat{h}_j(\gamma) =
\begin{cases}
  \sum_{n=-N+1}^{N-1} a^{(0)}_{j,n} \;
a^{(j+n)/2}\hat{\phi}(a^{(j+n)}\gamma) \qquad &\text{for }
  \gamma \in S_0, \\[2mm]
 \sum_{n=-N+1}^{N-1} a^{(1)}_{-j,-n} \;
a^{(j+n)/2}\hat{\phi}(a^{(j+n)}\gamma) & \text{for }
  \gamma \in S_1.
\end{cases}
\]
Setting $\hat{\tilde{\phi}}=D_{a^{-j}}h_j$ yields
\[
\hat{\tilde{\phi}}(\gamma) = \sum_{n=-N+1}^{N-1} b_{n} \;
\hat{\phi}(a^{n}\gamma)\qquad \gamma\in\R.
\]

For all $ j\in\mz$, we have $I_j=\{0,1\}$ and thus
$\mathrm{min}I_j=0$ and $\mathrm{max}I_j=1$. Note also that $\ell=n_{j,\mathrm{max}}= -
n_{j,\mathrm{min}}$ for all $j\in\mz$. Hence,
condition~\eqref{eq:Mj-optimal} reads
\begin{align*}
M_j &= \max\set{\xi_{-j+n_{j,\mathrm{max}}+N}^{(1)}-\xi_{j}^{(0)}, \xi_{-j+N}^{(1)}-\xi_{j+n_{j,\mathrm{min}}}^{(0)} }\\
&=\max\set{a^{M-j+n_{j,\mathrm{max}}}+a^{M-j}, a^{M-j}+a^{M-j-n_{\mathrm{j,min}}} }\\
&=a^{M-j}(a^{\ell}+1).
\end{align*}

From Theorem~\ref{thm:GSI-dual-frames-1D-freq} we
have that  $\{D_{a^j}T_{bk}\phi\}_{j,k\in\Z}$ and
$\{D_{a^j}T_{bk}\tilde\phi\}_{j,k\in\Z}$ are dual frames for $b \in \itvoc{0}{a^{-M}(1+a^{\ell})^{-1}}$. By setting
$\tilde\psi=\sqrt{b}\tilde{\phi}$, we therefore have that   $\{D_{a^j}T_{bk}\psi\}_{j,k\in\Z}$ and
$\{D_{a^j}T_{bk}\tilde\psi\}_{j,k\in\Z}$ are dual frames; it is clear that $\hat{\tilde\psi}$ is given
by the formula in \eqref{1404a}.
\end{proof}

Using the Fourier transform we can move the construction in
Theorem~\ref{thm:GSI-dual-frames-1D-freq} to the time domain. In this setting we obtain dual frames
 $\set{\modu[b_pm]g_p}_{m\in \Z, p\in \Z}$ and $\set{\modu[b_pm]h_p}_{m\in \Z, p\in \Z}$ with
 compactly supported generators $g_p$ and $h_p$. A simplified version of this result, useful for
 application in Gabor analysis, is as follows.

\begin{thm}
\label{thm:GSI-dual-frames-1D}
Let $\setprop{x_p}{p \in \Z} \subset \R$ be a sequence such that
\[
\lim_{p \to \pm \infty}x_p = \pm \infty, \quad x_{p-1}\le x_p, \quad \text{and } x_{p+2N-1}-x_p \le M, \quad p
\in \Z,
\]
for some constants $N \in \N$ and $M>0$. Let $g_p \in L^2(\R)$,
$p\in\Z$, be real-valued functions with such that
$\{b_p^{-1/2}g_p\}_{p \in \Z}$
are uniformly bounded functions with $\supp g_p \subset
\itvcc{x_p}{x_{p+N}}$. Assume that $\sum_{p \in \Z}
b_p^{-1/2} g_p(x) =1$ for a.e.\ $x\in \R$. Let
\[
h_p(x) = \sum_{n=-N+1}^{N-1} a_{p,n} \; g_{p+n}(x),  \quad
\text{for } x\in \R,
\]
where $\{a_{p,n}\}_{p\in \Z,n\in \set{-N+1,\dots,N-1}}$ is a
bounded sequence in $\Bbb C$. Suppose that $0<b_p<1/M$ and that
\begin{equation}
  \label{eq:a_pl-b_p-cond}
  a_{p,0} = 1  \quad \text{and} \quad
  \left(\frac{b_{p+n}}{b_p}\right)^{1/2} a_{p,n} +
  \left(\frac{b_{p}}{b_{p+n}}\right)^{1/2}  a_{p+n,-n} = 2 ,
\end{equation}
for $p\in \Z$ and $n=1,\dots,N-1$. Then $\set{E_{b_p m} {g}_p}_{m \in
      \Z, p \in \Z}$ and
$\set{E_{b_p m} {h}_p}_{m \in
      \Z, p \in \Z}$ are a pair of dual frames for $L^2(\R)$.
\end{thm}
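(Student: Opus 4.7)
The plan is to transfer the statement from the modulation side to the translation side by a Fourier transform and then invoke Theorem~\ref{thm:GSI-dual-frames-1D-freq}. Define $\check g_p = \mathcal{F}^{-1} g_p$ and $\check h_p = \mathcal{F}^{-1} h_p$, so that $\hat{\check g}_p = g_p$ and $\hat{\check h}_p = h_p$. A direct computation yields $\mathcal{F}^{-1}(E_{b_p m} g_p) = T_{-b_p m}\check g_p$, so after the harmless reindexing $m\mapsto -m$ the Gabor-like system $\setsmall{E_{b_p m} g_p}_{m,p\in \Z}$ is carried unitarily by $\mathcal{F}^{-1}$ to the GSI system $\setsmall{T_{b_p m} \check g_p}_{m,p\in \Z}$, and similarly for the $h_p$'s. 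Since unitary maps preserve the dual-frame property, it suffices to establish that the two GSI counterparts are dual frames for $L^2(\R)$.

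I would then specialize Theorem~\ref{thm:GSI-dual-frames-1D-freq} to $S = S_0 = \R$ with $I = \{0\}$, $J = \Z$, $c_p = b_p$, knots $\xi^{(0)}_k = x_k$, active sets $I_p = \{0\}$ and $J_0 = \Z$, bijection $\varphi_0 = \mathrm{id}$, and coefficients $a^{(0)}_{p,n} = a_{p,n}$. Under these identifications the Fourier-side hypotheses of Theorem~\ref{thm:GSI-dual-frames-1D-freq} become exactly the assumed properties of $g_p$: real-valuedness, boundedness, $\supp g_p\subset\itvcc{\xi^{(0)}_p}{\xi^{(0)}_{p+N}}$, uniform boundedness of $\setsmall{b_p^{-1/2} g_p}$, and the partition of unity $\sum_p b_p^{-1/2} g_p = \chi_\R$. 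The definition \eqref{eq:def-hj} collapses to $\hat{\check h}_p = \sum_{n=-N+1}^{N-1} a_{p,n}\, g_{p+n} = h_p$, and the coupling condition \eqref{eq:a_pl-c_j-cond} reduces verbatim to \eqref{eq:a_pl-b_p-cond}.

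The only verification that is not a pure translation is the constraint $c_p\le 1/M_p$. With the $M_p$ from \eqref{eq:def-M_j} one would need $\max\set{x_{p+2N}-x_p,\,x_{p+N}-x_{p-N}}\le 1/b_p$, which the spacing hypothesis does not provide. Instead I would use the sharper $M_p$ of \eqref{eq:Mj-optimal}: since $a_{p,n}=0$ for $|n|\ge N$ we have $n_{p,\max}\le N-1$ and $n_{p,\min}\ge -N+1$, so $M_p\le \max\set{x_{p+2N-1}-x_p,\;x_{p+N}-x_{p-N+1}}\le M$, where the second inequality follows by applying $x_{q+2N-1}-x_q\le M$ at $q=p$ and at $q=p-N+1$. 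Hence $b_p<1/M\le 1/M_p$, and Theorem~\ref{thm:GSI-dual-frames-1D-freq} applies; pulling the conclusion back through the isometry $\mathcal{F}$ completes the argument. The main obstacle is precisely this refined $M_p$ step, which explains why the spacing is stated with the tight constant $2N-1$ rather than $2N$; everything else is dictionary work between the two sides of the Fourier transform.
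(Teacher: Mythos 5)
Your proposal is correct and follows exactly the route the paper intends: the paper states Theorem~\ref{thm:GSI-dual-frames-1D} without a written proof, describing it precisely as Theorem~\ref{thm:GSI-dual-frames-1D-freq} moved to the time domain by the Fourier transform, which is your dictionary with $S=S_0=\R$, $\xi^{(0)}_k=x_k$, $c_p=b_p$, $\varphi_0=\mathrm{id}$. Your verification of $b_p\le 1/M_p$ via the sharper choice \eqref{eq:Mj-optimal}, using $n_{p,\max}\le N-1$, $n_{p,\min}\ge -N+1$ and the spacing bound at $q=p$ and $q=p-N+1$, is exactly the detail needed to reconcile the $2N-1$ spacing hypothesis with the support-separation requirement, and it is correct.
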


Theorem~\ref{thm:GSI-dual-frames-1D} generalizes results on SI systems by Christensen and
Sun~\cite{MR2407863} and Christensen and Kim~\cite{MR2587577} in the following way: Taking $b_p=b$
for all $p\in \Z$, Theorem~\ref{thm:GSI-dual-frames-1D} reduces to
\cite[Theorem 2.2]{MR3061703} and to \cite[Theorem~2.5]{MR2407863}
when further choosing $a_{p,n}=0$ for $n=1,\dots,N-1$ and $p \in \Z$ and to Theorem 3.1 in
\cite{MR2587577} when choosing $g_p=\tran[p]g$ for some $g \in
L^2(\R)$.



\section{Wave packet systems}
\label{sec:wave-packet-systems}

Let $b>1$, and let $\{(a_j, d_j)\}_{j\in J}$ be a countable set in
$\R^+\times \R$. The wave packet system $\wpj$ is a
GSI system with
 \[
  g_j=D_{a_j}E_{d_j}\psi \; \text{and} \; c_j=a_jb, \quad j\in
  J.
\]
It is of course possible to apply the dilation, modulation and
translation operator in a different order than in $\wpj$. Indeed, we
will also consider the collection of functions $\siwp$, which is a
shift-invariant version of the wave packet system. It takes the form of
a GSI system with
 \[
  g_j=D_{a_j}E_{d_j}\psi  \; \text{and} \;  c_j=b, \quad j\in J,
\]
and it contains the Gabor-like system $\set{T_{bk} E_{aj} \psi}_{j,k\in \Z}$, but not the wavelet system, as a
special case.

There are four more ordering of the  dilation, modulation and
translation operators; however, the study of these systems reduces to
either the study of $\wpj$ or $\siwp$. This is clear from the commutator relations:
\[
D_{a_j}T_{bk}E_{d_j} = \myexp{- 2\pi i d_jbk} D_{a_j}E_{d_j}
 T_{bk} = \myexp{- 2\pi i d_jbk} E_{a_j^{-1} d_j} D_{a_j} T_{bk}
\]
and
\[
T_{bk}D_{a_j}E_{d_j} = T_{bk} E_{a_j^{-1} d_j} D_{a_j} = \myexp{- 2\pi i a_j^{-1}d_jbk} E_{a_j^{-1} d_j} T_{bk}  D_{a_j}.
\]

The following result is a special case of
Theorem~\ref{thm:suff-cond-for-A-and-B-bound} for the case of wave packet
systems of the form $\wpj$; a direct proof was given in \cite{MR2420867}.
\begin{thm}
  \label{thm:ole-rahimi}
Let $b>1$, and let $\{(a_j, d_j)\}_{j\in J}$ be a countable set in
$\R\setminus\{0\}\times \R$. Assume that
  \begin{equation}\label{eq:bzm1}
    B:=\frac{1}{b}\sup_{\gamma\in\R}\sum_{j\in J}\sum_{k\bbz}|\hat{\psi}(a_j\gamma
    -d_j)\hat{\psi}(a_j\gamma-d_j- k/b)|<\infty.
  \end{equation}
  Then $\wpj$ is a Bessel sequence with bound $B$. Further, if also
  \[
    A:=\frac{1}{b}\inf_{\gamma\in\R}\bigg(\sum_{j\in J}|
    \hat \psi (a_j\gamma-d_j)|^{2}
    - \sum_{j\in J}\sum_{0\neq k\bbz} |\hat \psi
    (a_j\gamma-d_j)\hat \psi (a_j\gamma-d_j- k/b)|\bigg)>0,
  \]
  then $\wpj$ is a frame for $\ltr$ with bounds $A$ and $B$.
\end{thm}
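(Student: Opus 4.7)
The plan is to view $\wpj$ as a GSI system and invoke Theorem~\ref{thm:suff-cond-for-A-and-B-bound} with $S=\R$. As indicated in the theorem's introduction, we set $g_j = D_{a_j} E_{d_j}\psi$ and $c_j = a_j b$ for $j \in J$. The only real work is to rewrite the Bessel-type and frame-type expressions appearing in Theorem~\ref{thm:suff-cond-for-A-and-B-bound} in terms of $\hat\psi$ and to verify that they agree (up to reindexing) with $B$ and $A$ in the statement.

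The key computation uses the Fourier transform identities $\widehat{D_a f}(\gamma) = a^{1/2}\hat f(a\gamma)$ and $\widehat{E_d f}(\gamma) = \hat f(\gamma - d)$, which give
\[
 \hat g_j(\gamma) = a_j^{1/2}\, \hat\psi(a_j \gamma - d_j).
\]
Since $c_j^{-1} = (a_j b)^{-1}$, we also obtain $\hat g_j(\gamma + c_j^{-1} m) = a_j^{1/2}\hat\psi(a_j \gamma - d_j + m/b)$, hence
\[
 \frac{1}{c_j}\absbig{\hat g_j(\gamma)\,\hat g_j(\gamma + c_j^{-1}m)} = \frac{1}{b}\absbig{\hat\psi(a_j\gamma - d_j)\,\hat\psi(a_j\gamma - d_j + m/b)}.
\]
Summing over $m\in\Z$ and reindexing $k=-m$ (which is a bijection on $\Z$) turns this into the expression in~\eqref{eq:bzm1}; taking the essential supremum (noting $\sup \ge \esssup$) therefore identifies the quantity in~\eqref{eq:CC-condition} with the hypothesis bound $B$. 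An identical change of variables applied to the $m=0$ term and the $m\ne 0$ terms separately identifies the inf-expression of Theorem~\ref{thm:suff-cond-for-A-and-B-bound}(ii) with the quantity $A$ defined in the theorem.

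Given this dictionary, both assertions of the theorem follow immediately from the corresponding parts of Theorem~\ref{thm:suff-cond-for-A-and-B-bound}. The main obstacle is not conceptual but notational: one must be careful to keep track of the factor $a_j$ coming from the $L^2$-normalized dilation and of the sign in the translation-versus-modulation commutation, so that the inner argument $a_j\gamma - d_j \pm m/b$ matches~\eqref{eq:bzm1} exactly after reindexing. Once this bookkeeping is done, no further estimates are required.
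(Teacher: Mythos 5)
Your proposal is correct and takes essentially the same route as the paper: the paper simply notes that the theorem is a special case of Theorem~\ref{thm:suff-cond-for-A-and-B-bound} under the GSI representation $g_j=D_{a_j}E_{d_j}\psi$, $c_j=a_jb$ (citing \cite{MR2420867} for a direct proof), and your computation $\hat g_j(\gamma)=a_j^{1/2}\hat\psi(a_j\gamma-d_j)$ with the reindexing $k=-m$ is exactly the routine dictionary that identifies \eqref{eq:CC-condition} and the inf-expression of Theorem~\ref{thm:suff-cond-for-A-and-B-bound}(ii) with $B$ and $A$. The only cosmetic point is that the theorem as stated allows $a_j\in\R\setminus\{0\}$, for which one should take $c_j=|a_j|b$ and $|a_j|^{1/2}$ in the dilation identity; the sign is then absorbed by the same reindexing over $\Z$.
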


A similar result holds for the shift-invariant wave packet system $\siwp$.

\subsection[LIC and a lower bound for the Calder\'on sum]{Local integrability conditions and a lower bound for the Calder\'on sum}
\label{sec:local-integr-cond}

Theorem~\ref{thm:ole-rahimi} allows us to construct frames, even tight frames, without worrying about
technical local integrability conditions; in fact, the condition~\eqref{eq:bzm1}
implies that the $\alpha$-LIC is satisfied.
\begin{lem}[\!\!\cite{JakobsenReproducing2014}]
\label{lem:CC-implies-aLIC}
  If \eqref{eq:bzm1} holds, then the $\alpha$-LIC for wave packet
  systems holds with respect to some set $E \in \cE$, i.e.,
\[
L'(f)= \sum_{j\in J} \sum_{k\bbz} \int_{\R}
\abssmall{\hat{f}(\gamma) \hat{f}(\gamma-k/(a_jb)) \hat{\psi}(a_j\gamma
    -d_j)\hat{\psi}(a_j\gamma-d_j- k/b)} \, \,\mathrm{d}\gamma <\infty
\]
for all $f \in \mathcal{D}_E$.
\end{lem}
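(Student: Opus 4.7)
The strategy is to exploit that $f\in\cD_E$ gives us a boundedness and compactness advantage on the $\hat f$--factors, so that the tail behavior of $L'(f)$ is controlled entirely by the double sum over $j$ and $k$ of the $\hat\psi$--products, which by hypothesis~\eqref{eq:bzm1} is essentially uniformly bounded by $bB$.

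Concretely, fix any $E\in\cE$ (for instance $E=\emptyset$) and let $f\in\cD_E$, so that $\hat f\in L^\infty(\R)$ with compact support $K\subset \R\setminus E$ of finite Lebesgue measure. The first step is the trivial but crucial pointwise bound
\[
\absbig{\hat f(\gamma)\,\hat f(\gamma-k/(a_jb))}\le \norm[\infty]{\hat f}^2\,\chi_K(\gamma),
\]
valid uniformly in $j\in J$ and $k\in\Z$ (note that we only need $\hat f(\gamma)$ to vanish off $K$; the translated factor is simply dominated by $\norm[\infty]{\hat f}$). The second step is to apply Tonelli's theorem to interchange the summations over $j,k$ with the integral over $\gamma$ in the definition of $L'(f)$, which is legitimate since every term is nonnegative.

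Combining these two steps yields
\[
L'(f)\le \norm[\infty]{\hat f}^2\int_{K}\sum_{j\in J}\sum_{k\in\Z}\absbig{\hat\psi(a_j\gamma-d_j)\,\hat\psi(a_j\gamma-d_j-k/b)}\,\mathrm{d}\gamma,
\]
and by hypothesis~\eqref{eq:bzm1} the inner double sum is bounded by $bB$ for almost every $\gamma\in\R$. Since $\meas{K}<\infty$, this gives $L'(f)\le \norm[\infty]{\hat f}^2\,bB\,\meas{K}<\infty$, which is exactly the $\alpha$-LIC for the wave packet system with respect to~$E$.

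There is no genuine obstacle here: the content of the lemma is that~\eqref{eq:bzm1} is really a pointwise\,/\,essential-sup statement on the full double sum, not merely on a partial series, so once one swaps order of summation and integration the estimate is immediate. The only thing to be careful about is that the bound on $|\hat f(\gamma)\hat f(\gamma-k/(a_jb))|$ used above needs $\hat f$ to be in $L^\infty$ (for the factor at the shifted argument) and compactly supported (to make the resulting integral finite), which are precisely the defining features of $\cD_E$.
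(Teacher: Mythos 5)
Your proof is correct: the paper gives no argument for this lemma (it simply cites \cite{JakobsenReproducing2014}), and your estimate --- dominating $\abssmall{\hat f(\gamma)\hat f(\gamma-k/(a_jb))}$ by $\norm[\infty]{\hat f}^2\chi_K(\gamma)$ with $K=\supp\hat f$, interchanging sum and integral by Tonelli, and invoking \eqref{eq:bzm1} to bound the inner double sum by $bB$ --- is exactly the standard argument behind the cited result. Your choice $E=\emptyset$ moreover yields the strongest form of the conclusion, since $\cD_E\subset\cD_\emptyset$ for every $E\in\cE$, so the $\alpha$-LIC then holds with respect to any admissible $E$.
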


On the other hand, as we will see in Example~\ref{exa:joachim},
condition \eqref{eq:bzm1} does not imply the
LIC. Indeed, for wave packet system, if possible, one should work with $\alpha$-LIC instead of the
LIC. We will see further results supporting this claim in the following.  We continue our study of
local integrability conditions with a special case of the wave packet system $\wpj$ that is highly
redundant. Given $a>1$ and a sequence $\{d_m\}_{m\bbz}$ in $\R$, we consider the collection of
functions
\begin{equation}
\label{eq:50301c}
\{D_{a^j}T_{bk}E_{d_m}\psi\}_{j,m,k\bbz}.
\end{equation}
which can be considered as GSI systems with $c_{(j,m)}=a^jb$ and $g_{j,m}=D_{a^j}E_{d_m}\psi$. For
wave packet systems of this form the point set $\{(a_j,d_j)\}_{j \in J} \subset
\R^+ \times \, \R$ is a separable set of the form $\{(a^j,d_m)\}_{j,m \in Z}$. For each
fixed $m \in \Z$ in~\eqref{eq:50301c} the system $\{D_{a^j}T_{bk}E_{d_m}\psi\}_{j,k\in\Z}$ is a
wavelet system (with generator $E_{d_m}\psi$).

The first obvious constraint is that if the system $\wps$ is a frame, then the sequence
$\{d_m\}_{m\in\Z}$ cannot be bounded. In fact, in that case the sequence $\{d_m\}_{m\in\Z}$ has an
accumulation point and thus $\wps$ cannot be a Bessel sequence, see \cite[Lemma 2.3]{MR2420867}.
We will now show that a wave packet system on the form~\eqref{eq:50301c} cannot satisfy
condition~\eqref{eq:loc}.

\begin{lem}
\label{lem:not-loc}
Assume that  $\{d_m\}_{m\in\Z}$ is unbounded and $\psi\neq 0$. Then
\begin{equation}\label{eq:mycondition}
\sum_{\{j:a^jb>M\}}\sum_{m\bbz}|a^{\frac{j}{2}}\hat{\psi}(a^j\cdot-d_m)|^2\notin \Lloc(\R\setminus E),
\end{equation}
for all $M >0$ and all  $E \in \cE$.
\end{lem}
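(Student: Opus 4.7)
The plan is to exhibit a specific compact set $K \subset \R \setminus E$ on which the integral of the given sum diverges; the mechanism is a Fubini swap followed by a counting argument that exploits the unboundedness of $\{d_m\}$ via the covering generated by the dilations of $K$.

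First I would assume, without loss of generality, that $\sup_m d_m = +\infty$; the alternative $\inf_m d_m = -\infty$ is handled by a symmetric argument in the negative half-line. Since $\overline{E}$ has Lebesgue measure zero and is therefore nowhere dense, its complement is a dense open set, and I would select a compact interval $K = [c_1,c_2] \subset (0,\infty)\setminus\overline{E}$ with $c_2/c_1 \ge a$. With such a choice, the rescaled intervals $\{[a^jc_1, a^jc_2] : j \ge j_0\}$, where $j_0$ is the smallest integer with $a^{j_0}b > M$, cover the half-ray $[a^{j_0}c_1, \infty)$ without gaps.

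Next, writing $F(\gamma) = \sum_{\{j:a^jb>M\}}\sum_{m\in\Z} a^j|\hat\psi(a^j\gamma - d_m)|^2$ and applying Fubini's theorem (permitted since all terms are non-negative) together with the substitution $\eta = a^j\gamma - d_m$ gives
\[
\int_K F(\gamma)\,\mathrm{d}\gamma \,=\, \sum_{j,m}\int_{a^jK - d_m}|\hat\psi(\eta)|^2\,\mathrm{d}\eta \,=\, \int_\R |\hat\psi(\eta)|^2\,N(\eta)\,\mathrm{d}\eta,
\]
where $N(\eta) = \#\{(j,m): a^jb > M,\ \eta + d_m \in a^jK\}$. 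By the gap-free covering above, for every $\eta \in \R$ and every $m$ with $d_m + \eta \ge a^{j_0}c_1$ there is at least one admissible $j$ contributing to $N(\eta)$. Since $\sup_m d_m = +\infty$, infinitely many such $m$ exist, so $N(\eta)=+\infty$ for every $\eta \in \R$. Combined with $\hat\psi$ not being identically zero (from $\psi \ne 0$), this yields $\int_K F = +\infty$, and hence $F \notin \Lloc(\R\setminus E)$.

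The step I expect to be the subtlest is the selection of a compact $K \subset (0,\infty)\setminus\overline{E}$ with ratio $c_2/c_1 \ge a$. For a well-behaved $E$ this is immediate (any sufficiently long ``gap'' of $\R_+\setminus\overline{E}$ on a logarithmic scale suffices), but a highly structured $\overline{E}$ could in principle obstruct this simple choice. In such degenerate situations one must either exploit the symmetric case $\inf_m d_m = -\infty$ by working in the negative half-line, or refine the argument by fixing an accumulation point $\theta$ of the fractional parts of $\{\log_a|d_m|\}$ modulo $1$ and picking $K$ to be a small interval around some point $a^{\theta'} \notin \overline{E}$ close to $a^\theta$, so that $d_m/a^{j_m}$ clusters inside $K$ for infinitely many $m$.
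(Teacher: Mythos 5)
Your main computation is sound and is in fact the same mechanism as the paper's proof: apply Tonelli, substitute $\eta=a^j\gamma-d_m$, and use that the dilates $a^jK$ of an interval of multiplicative length at least $a$ cover a half-line, so that the unboundedness of $\{d_m\}$ forces infinitely many contributions. The genuine gap is precisely the step you flag yourself: the existence of a compact interval $K=[c_1,c_2]\subset(0,\infty)\setminus\overline{E}$ with $c_2/c_1\ge a$. Nothing in the definition of $\cE$ guarantees this. For $E=\{0\}\cup\{\pm a^{k}:k\in\Z\}$ (closed and of measure zero) every interval avoiding $\overline{E}$ in either half-line has ratio strictly less than $a$, so your main choice and your first fallback (working on the negative axis) are blocked simultaneously. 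Your second fallback cannot be repaired in general either: take $a=2$, $\hat\psi=\chi_{[0,1]}$ and $d_m=2^{|m|}$, which is unbounded. The support of $\gamma\mapsto\hat\psi(2^j\gamma-d_m)$ is $[2^{|m|-j},\,2^{|m|-j}+2^{-j}]$, and such an interval meets a fixed compact interval $[c_1,c_2]$ contained in a cell $(2^k,2^{k+1})$ only for finitely many pairs $(j,m)$; hence the sum in the statement is even locally bounded on compact sets avoiding $\overline{E}$, the divergence being concentrated at the grid points of $\overline{E}$ themselves. So in this configuration no compact $K\subset\R\setminus\overline{E}$ can witness the failure, no matter which accumulation point of $\{\log_a d_m\}$ you track, and your refinement cannot close the gap.

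The paper sidesteps this entirely by not asking the test set to avoid $\overline{E}$: it takes $K=[1,a]\setminus E$, an interval of ratio exactly $a$ with the null set $E$ removed, so that $\bigcup_{j\ge M'}a^jK$ covers $[a^{M'},\infty)$ up to a null set. After the same change of variables, each fixed $m$ contributes at least $\int_{a^{M'}+d_{m_l}}^{\infty}|\hat\psi(\gamma)|^2\,\mathrm{d}\gamma$ along a subsequence $d_{m_l}\to-\infty$ (or the mirror-image estimate when $d_{m_l}\to+\infty$), and these terms are bounded below by a fixed positive constant for large $l$ because $\psi\neq0$; summing over $l$ gives divergence. In other words, the non-membership in $\Lloc(\R\setminus E)$ asserted in the lemma is witnessed by the paper on bounded sets of the form $[1,a]\setminus E$, not on compact subsets of $\R\setminus\overline{E}$. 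If you adopt that reading of the local integrability condition (as the paper's proof implicitly does), your Fubini/counting argument goes through verbatim with $K=[1,a]\setminus E$ and the delicate selection of an interval avoiding $\overline{E}$ --- the step your argument cannot complete for arbitrary $E\in\cE$ --- simply disappears; as written, however, your proof does not establish the lemma for all $E\in\cE$.
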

\begin{proof}
Let $M >0$ and let $E \in \cE$. Assume that $\{d_m\}_{m\in\Z}$ is not bounded below.
Note that $\{j:a^jb>M\}=\{j: j\geq M'\}$ where $M'=[\frac{\ln M-\ln b}{\ln a}]+1$.
 Taking $K=[1,a]\setminus E$, we have
  \begin{align}\label{eq:0.9}
    I_{M}&:=
   \sum_{m\bbz} \sum_{j=M'}^\infty\int_Ka^j| \hat{\psi}(a^j\gamma+d_m)|^2d\ga=\sum_{m\bbz}\sum_{j=M'}^\infty\int_{a^jK}| \hat{\psi}(\ga+d_m)|^2d\ga\nonumber\\
&=\sum_{m\bbz}\sum_{j=M'}^\infty\int_{a^j[1,a]}|\hat{\psi}(\ga+d_m)|^2d\ga\geq
\sum_{m\bbz}\int_{a^{M'}}^\infty| \hat{\psi}(\ga+d_m)|^2d\ga .
  \end{align}
  Since $\{d_m\}_{m\in\Z}$ is not bounded below, there exists a subsequence
  $\{d_{m_l}\}_{l=1}^\infty$ of $\{d_m\}_{m\in\Z}$ such that $d_{m_l}\rightarrow-\infty$. Hence, for each $N\in
  \R$ there exists $L\in\Bbb N$ such that $d_{m_l}<N-a^{M'}$ for $l\geq L$. Thus
  \[
  \int_{a^{M'}+d_{m_l}}^\infty|\hat{\psi}(\gamma)|^2>\int_{N}^\infty|\hat{\psi}(\gamma)|^2\,\mathrm{d}\gamma,
  \]
  for all $l\geq L$. But in this case, using inequality~\eqref{eq:0.9} and choosing $N$ small enough, we have
  \[
  I_{M}\geq\sum_{l\geq L}\int_{N}^\infty|\hat{\psi}(\gamma)|^2\,\mathrm{d}\gamma=\infty.
  \]
 A similar argument  shows that if $\{d_m\}_{m\in\Z}$ is not bounded above, then $I_M=\infty$.
Therefore \eqref{eq:mycondition} holds.
\end{proof}

Thus, for the case where $\{d_m\}_{m\in\Z}$ is
unbounded, it is impossible for a wave packet system $\wps$ to satisfy
the LIC-condition.  On the other hand,
if $\{d_m\}_{m\in\Z}$ is bounded, we know that it is impossible for such a system to form a Bessel
sequence. Hence, the wave packet system $\wps$ cannot simultaneously be a Bessel sequence and
satisfy the LIC.
\begin{cor}
\label{thm:wps-bessel-not-lic}
If the system $\wps$ is a Bessel sequence, then this system does not satisfies the LIC.
\end{cor}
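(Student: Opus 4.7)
The plan is to argue by contradiction: suppose $\wps$ is both Bessel and satisfies the LIC with respect to some $E \in \cE$, and then chain the earlier machinery to a contradiction with Lemma~\ref{lem:not-loc}.

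First I would observe that the Bessel assumption forces $\{d_m\}_{m\in\Z}$ to be unbounded. Indeed, the paragraph preceding Lemma~\ref{lem:not-loc} recalls from \cite[Lemma 2.3]{MR2420867} that if $\{d_m\}$ is bounded (so that it has an accumulation point) and $\psi\neq 0$, then $\wps$ fails to be Bessel. Thus we may assume $\{d_m\}$ is unbounded, which is precisely the hypothesis of Lemma~\ref{lem:not-loc}. (The trivial case $\psi=0$ is excluded.)

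Next, I would recast $\wps$ as the GSI system with $c_{(j,m)}=a^jb$ and $g_{(j,m)}=D_{a^j}E_{d_m}\psi$, so that $\hat{g}_{(j,m)}(\gamma)=a^{j/2}\hat{\psi}(a^j\gamma-d_m)$. Since $\wps$ is Bessel with some bound $B$, Theorem~\ref{thm:HLW-Bessel-bound} gives
\[
\sum_{j,m\in\Z}\frac{1}{a^jb}\,\bigl|a^{j/2}\hat{\psi}(a^j\gamma-d_m)\bigr|^2 \le B \qquad \text{for a.e. } \gamma\in\R,
\]
so the Calder\'on sum for $\wps$ is in $\Lloc(\R\setminus E)$ for every $E\in\cE$. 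This is exactly the standing assumption of Proposition~\ref{LICM}, so under our contradiction hypothesis the LIC for $\wps$ with respect to $E$ is equivalent to condition~\eqref{eq:loc}, namely $\sum_{j,m}|\hat{g}_{(j,m)}|^2 \in \Lloc(\R\setminus E)$. Applying Proposition~\ref{prp:sum-over-cj-M} (whose hypothesis is again supplied by the Bessel property), \eqref{eq:loc} is in turn equivalent to the existence of some $M>0$ with
\[
\sum_{\{(j,m)\,:\,a^jb>M\}} \bigl|a^{j/2}\hat{\psi}(a^j\cdot-d_m)\bigr|^2 \in \Lloc(\R\setminus E).
\]

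Finally, I would invoke Lemma~\ref{lem:not-loc}: because $\{d_m\}$ is unbounded and $\psi\neq 0$, this membership fails for \emph{every} $M>0$ and every $E\in\cE$. This contradicts the previous reduction and so rules out the LIC. I do not anticipate a real obstacle, as the result is essentially a packaging of Lemma~\ref{lem:not-loc} together with the bridging Propositions~\ref{LICM} and~\ref{prp:sum-over-cj-M}; the only care needed is to verify that the Bessel hypothesis simultaneously supplies the local integrability of the Calder\'on sum demanded by Proposition~\ref{LICM} and the tail-sum reduction afforded by Proposition~\ref{prp:sum-over-cj-M}, and to separately dispose of the bounded-$\{d_m\}$ case via the cited Bessel obstruction.
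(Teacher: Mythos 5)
Your proposal is correct and follows essentially the paper's own route: the paper likewise combines the cited fact that bounded $\{d_m\}_{m\in\Z}$ precludes the Bessel property with Lemma~\ref{lem:not-loc} for the unbounded case, bridged to the LIC via Proposition~\ref{prp:sum-over-cj-M} and Proposition~\ref{LICM} (whose hypotheses are supplied by the Bessel bound through Theorem~\ref{thm:HLW-Bessel-bound}). You merely spell out these bridging steps more explicitly than the paper's brief paragraph preceding the corollary, and your handling of the trivial case $\psi=0$ matches the paper's implicit assumption.
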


The following example
introduces a family of wave packet tight frames of the form 
  \[ \{D_{a^{j}}T_{bk}E_{d_m}\psi\}_{j\bbz,k\bbz, m
  \bbz\setminus\{0\}} \] that satisfies \eqref{eq:bzm1} and thus the $\alpha$-LIC
by Lemma~\ref{lem:CC-implies-aLIC}, but not the LIC.

\begin{ex}
\label{exa:joachim}
Let $\psi \in L^2(\R)$ be a Shannon-type
scaling function defined by
$\hat{\psi}=\chi_{\itvcc{-1/4}{1/4}}$. Let $a=2$, $b=1$, and define $d_m= \sign{m}(2^{|m|}-\frac{3}{4})$ for $m\in\Z\setminus \{0\}$.
We will first argue that the wave packet system
$\{D_{a^{j}}T_{bk}E_{d_m}\psi\}_{j\bbz,k\bbz, m \bbz\setminus\{0\}}$ is a tight frame for $\ltr$ with bound $1$. To see this, we will simply verify the  conditions in
Theorem~\ref{thm:ole-rahimi}. All we need to do is to prove that
\begin{equation}\label{p}
\sum_{j\bbz} \sum_{m\bbz\setminus \{0\}}| \hat \psi (2^j
\gamma-d_m)|^{2} =1 \quad \text{for a.e. $\gamma \in \R$}.
\end{equation}
With our definition of $\psi$, this amounts to verifying that the sets
\begin{equation}
  \label{eq:1a} I_{m,j}:=2^{-j} \itvco{ -\tfrac{1}{4}+d_m}{\tfrac{1}{4}+d_m}, \ j\bbz, m\bbz
  \setminus \{0\},
\end{equation}
form a disjoint covering of $\R$. To see this, let $m\in\N$ and $k\in\Z$, then
\[
I_{m,m-k}=2^{-m+k}\itvco{-1+2^{m}}{-\tfrac{1}{2}+2^{m}}=2^{k}\Bigl(1+2^{-m}\itvco{-1}{-\tfrac{1}{2}}\Bigr).
\]
The sets $1+2^{-m}\itvco{-1}{-\frac{1}{2}}$, $m\in\N$, form a disjoint
covering of $\itvco{\frac{1}{2}}{1}$. Hence the sets $2^{k}(1+2^{-m}\itvco{-1}{-\frac{1}{2}})$, $k\in\Z, m\in\N$, form a disjoint covering of $\itvoo{0}{\infty}$.

A similar argument for $m<0$ shows that
$\{I_{m,j}\}_{j\in\Z,m\in -\N}$ is a disjoint covering of
 $\itvoo{-\infty}{0}$.
 We conclude that \eqref{p} holds. Thus, 
   the system $\{D_{2^{j}}T_{k}E_{d_{m}}\psi\}_{j\bbz,k\bbz, m
     \bbz\setminus\{0\}}$ is a tight frame.

  Since $\{D_{2^{j}}T_{k}E_{d_{m}}\psi\}_{j\bbz,k\bbz, m \bbz\setminus\{0\}}$ satisfies
  \eqref{eq:bzm1}, it satisfies the $\alpha$-LIC by Lemma~\ref{lem:CC-implies-aLIC}. On the other hand, by
  Corollary~\ref{thm:wps-bessel-not-lic} the wave packet system does
  not satisfy the LIC.
\end{ex}

We finally consider lower bounds of the Calder\'on sum for wave packet
systems.  From Corollary~\ref{thm:lic-lower-bound} we know that any
GSI frame that satisfies the LIC will have a lower bounded Calder\'on
sum. A first result utilizing this observation is the following result
for the shift-invariant version of the  wave packet systems.

\begin{thm}
Let $\psi\in \ltr$. If $\siwp$ is a frame with lower bound $A$, then
\begin{equation}
\label{eq:0423c}
A\leq \sum_{j\in J} \frac{a_j}{b}|\hpi(a_j\gamma-d_j)|^2 \qquad \text{a.e.} \, \gamma\in \R.
\end{equation}
\end{thm}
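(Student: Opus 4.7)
The key observation is that the shift-invariant wave packet system $\siwp$ is itself a GSI system with constant step sizes $c_j = b$, so it is genuinely a shift-invariant (SI) system. The plan is therefore to compute the Calder\'on sum in GSI form, verify the LIC via the SI characterization, and then invoke the lower-bound corollary from Section~\ref{sec:lower-bound-calderon}.

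First I would express the generators in the Fourier domain. Using $\widehat{D_{a_j}f}(\gamma) = a_j^{1/2}\hat{f}(a_j\gamma)$ and $\widehat{E_{d_j}\psi}(\gamma)=\hat{\psi}(\gamma-d_j)$, one gets
\[
\hat{g}_j(\gamma) = \widehat{D_{a_j}E_{d_j}\psi}(\gamma) = a_j^{1/2}\,\hat{\psi}(a_j\gamma - d_j),
\]
so that the Calder\'on sum of the GSI system takes precisely the form
\[
\sum_{j\in J} \frac{1}{c_j}|\hat{g}_j(\gamma)|^2 = \sum_{j\in J} \frac{a_j}{b}|\hp(a_j\gamma-d_j)|^2,
\]
which is exactly the quantity we want to bound below.

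Next, since $\siwp$ is a frame for $\ltr$, it is in particular a Bessel sequence with some bound $B$. By Theorem~\ref{thm:HLW-Bessel-bound}, the Calder\'on sum is bounded above by $B$ a.e.\ on $\R$, and hence the function $\sum_{j\in J}|\hat{g}_j(\cdot)|^2 = b\sum_{j\in J} c_j^{-1}|\hat{g}_j(\cdot)|^2$ lies in $L^\infty(\R)\subset \Lloc(\R\setminus E)$ for \emph{any} $E\in\cE$; in particular we may take $E=\emptyset$. Consequently, condition~\eqref{eq:loc} from Theorem~\ref{thm:loc-lowerbound} is satisfied.

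Finally, I would apply Theorem~\ref{thm:loc-lowerbound} directly to the GSI representation above: it yields
\[
A \le \sum_{j\in J}\frac{1}{c_j}|\hat{g}_j(\gamma)|^2 = \sum_{j\in J}\frac{a_j}{b}|\hp(a_j\gamma-d_j)|^2 \qquad \text{for a.e. }\gamma\in\R,
\]
which is exactly \eqref{eq:0423c}. (Alternatively, one could note via the SI lemma that \eqref{eq:SI-system-L1-loc} holds and hence the LIC holds with respect to $E=\emptyset$, then cite Corollary~\ref{thm:lic-lower-bound}.) No real obstacle is expected; the only point that needs care is the identification of the Calder\'on sum after the Fourier transform, where one must not lose the factor $a_j$ arising from the scaling operator's action on $\hp$.
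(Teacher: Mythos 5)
Your proof is correct and follows essentially the same route as the paper: both exploit that $\siwp$ is an SI system ($c_j=b$), so the Bessel/frame property forces the Calder\'on sum to be locally integrable (equivalently, the LIC holds), and then the lower bound follows from Theorem~\ref{thm:loc-lowerbound} (equivalently Corollary~\ref{thm:lic-lower-bound}), with the correct identification $\hat{g}_j(\gamma)=a_j^{1/2}\hat{\psi}(a_j\gamma-d_j)$. Your parenthetical alternative is in fact exactly the paper's stated argument, and your version even makes explicit the Bessel-bound justification that the paper's terse ``any SI system satisfies the LIC'' leaves implicit.
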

\begin{proof}
  Any SI system satisfies the LIC. Hence, in particular, the system $\siwp$ satisfies the LIC. The result is now immediate
  from Corollary~\ref{thm:lic-lower-bound}.
\end{proof}

On the other hand, Example~\ref{exa:joachim} shows that, in general,
we cannot expect wave packet frames $\wpj$, even tight frames, to satisfy
the LIC. Hence, in general, we can only say that if the wave packet
system $\wpj$ is a frame with lower bound $A$ \emph{and} if it satisfies the LIC, then
\begin{equation}
\label{eq:0423c}
A\leq \sum_{j\in J} \frac{1}{b}|\hpi(a_j\gamma-d_j)|^2 \qquad \text{a.e.} \, \gamma\in \R.
\end{equation}

\subsection{Constructing dual wave packet frames}
\label{sec:constr-dual-wp-frames}

We now want to apply the general construction of dual GSI frames in Theorem~\ref{thm:GSI-dual-frames-1D-freq} to
the case of wave packet systems. We first consider how to construct
suitable partitions of the unity.

\begin{ex}
\label{exa:part-of-unity}
Let  $f:[a^{j_0},a^{j_0+1}]\rightarrow\R$ be a continuous function
such that $f(a^{j_0})=0$, $f(a^{j_0+1})=1$ and
$f(-\ga+a^{j_0+1}+a^{j_0})+f(\gamma)=1$. Define
\[
\hpi(\gamma):=\begin{cases}
                       f(|\ga|)    &     a.e.\ |\ga| \in [a^{j_0}, a^{j_0+1}],\\
                       1-f(\frac{|\gamma|}{a})      & a.e.\ |\ga| \in [a^{j_0+1},a^{j_0+2}],\\
                                                        0     &
                                                       \text{otherwise.}\end{cases}
\]
For  almost every $\gamma \in [-a^{j_0+1},a^{j_0+1}]$ there is exactly one $j\in J$ such that
$a^j|\gamma |\in [a^{j_0},a^{j_0+1}]$. Hence $\hpi (a^j\gamma)=f(|a^j\ga|),\,\hpi(a^{j+1}\gamma)=1-f(\frac{a^{j+1}|\gamma|}{a})$. Then for $J=\N\cup \{0\}$, we have
\[
\sum_{j\in J} \hpi (a^j\gamma)=\begin{cases}1& a.e.\ \gamma \in
  [-a^{j_0+1},a^{j_0+1}], \\
1-f(\frac{|\gamma|}{a})& a.e.\ |\ga|\in [a^{j_0+1},a^{j_0+2}], \\
0&\text{otherwise.}\end{cases}
\]
By shifting this function along $d\Z$, where $d=a^{j_0+1}(a+1)$, we have
\[
\sum_{m\in\Z}\sum_{j\in J}\hpi
(a^j(\ga-dm))=\sum_{m\in\Z}\sum_{j\in J}\hpi (a^j\ga-a^jdm)=1 \quad
a.e.\ \gamma \in \R .
 \]
\end{ex}

In the remainder of this section we let $\psi$ be defined as in
Example~\ref{exa:part-of-unity}. Note that depending on the choice of $f$, we can make $\hat{\psi}$
as smooth as we like. Hence, we can construct generators $\psi$ that
are band-limited functions with
arbitrarily fast decay in time domain and that have a partition of
unity property.

Note that if $ f(\ga)\in [0,1] $ for all $ \ga\in\itvcc{a^{j_0}}{a^{j_0+1}} $, the function $ \hpi $ is non-negative. In this case we can use the partition of unity to construct tight frames. Indeed, for the parameter choice
 $b< 2a^{j_0+2}$ the sums over $k \in \Z$ in
 Theorem~\ref{thm:ole-rahimi} only have non-zero terms for $k=0$. Define
 $\hat{\phi}=\hpi^{1/2}$. Since
\[
\sum_{m\in\Z}\sum_{j\in J}|\hat{\phi} (a^j\ga-a^jdm)|^2=1,
\]
it follows from Theorem~\ref{thm:ole-rahimi} that
$\{D_{a^j}T_{bk}E_{a^jdm}\phi\}_{k,m\in\Z,j\in J}$ is a 1-tight wave
packet frame for $\ltr$.

However, taking the square root of $\hpi$ might destroy desirable
properties of the generator, e.g., if $f$ is a polynomial, then $\hpi$ is piecewise polynomial,
but this property is not necessarily inherited by $\hat{\phi}:=\hpi^{1/2}$.
By constructing dual frames from
Theorem~\ref{thm:GSI-dual-frames-1D-freq}, we can circumvent this issue.

\begin{ex}
\label{exa:dual-wp-frames}
In order to apply Theorem~\ref{thm:GSI-dual-frames-1D-freq} we need to
setup  notation.
Let $\tilde{\psi}=\sqrt{b}\psi$. Consider the wave packet system  $\{D_{a^j}T_{bk}E_{a^jdm}\tilde{\psi}\}_{k,m\in\Z,j\in \N_0}$ as a GSI-system $\{T_{c_{(j,m)}k}g_{(j,m)}\}_{j\in\N_0,m,k\in\Z}$, where $g_{(j,m)}=D_{a^j}E_{a^jdm}\tilde{\psi}$ and $c_{(j,m)}=a^jb$ for all $j\in\N_0$ and $m\in\Z$.

For $i\in I$, define $S_i=\itvoc{di}{ d(i+1)}$.  Since $\hat{g}_{(j,m)}=D_{a^{-j}}T_{a^jdm}\hat{\tilde{\psi}}$, we have
\[
I_{(j,m)}=\set{m,m-1},\quad J_i=\setprop{(j,i),(j,i+1)}{j \in \N_0}.
\]
Define the knots $\seqsmall{\xi_{k}^{(i)}}_{k,i \in \Z}$ by
\begin{equation*}
\xi_{k}^{(i)}=
\begin{cases}
  -a^{j_0+3-k}+d(i+1) \qquad &k> 0, \\
 a^{j_0+k}+di & k\leq 0.\\
\end{cases}
\label{eq:xi}
\end{equation*}
Then the sequence $\seqsmall{\xi_{k}^{(i)}}_{k,i \in \Z}$ fulfills the
properties in \ref{enu:S}
in the setup from Section \ref{sec:constr-dual-gsi-frames}. For each $i\in \Z$, we define the bijective mapping $\varphi_i : J_i\mapsto \Z$ by
$\varphi_i (j,i)=-j$ and $\varphi_i (j,i+1)=1+j$ for all $j\in
\N_0$. Then  
\[
 \supp \hat{g}_{(j,m)} \subset \bigcup_{i \in I_{(j,m)}}
 \itvcc{\xi^{(i)}_{\varphi_i(j,m)}}{\xi^{(i)}_{\varphi_i(j,m)+2}}
 \]
The definition of $h_{(j,m)}$ for $j \in \N$ and $m\in \Z$, is
\begin{equation*}
\hat{h}_{(j,m)}=
\begin{cases}
\sum_{n=-1}^1 a^m_{-j,n}\hat{g}_{(j-n,m)}(\gamma) \qquad &\gamma \in S_m, \\
  \sum_{n=-1}^1 a^{m-1}_{j+1,n}\hat{g}_{(j+n,m-1)}(\gamma) \qquad &\gamma \in S_{m-1},\\
0 \qquad& \gamma \in \R\setminus (S_m \cup S_{m-1}),
\end{cases}
\end{equation*}
and, for $j=0$, we have
\begin{equation*}
\hat{h}_{(0,m)}=
\begin{cases} a^m_{0,-1}\hat{g}_{(1,m)}(\gamma)+\hat{g}_{(0,m)}(\gamma)+a^m_{0,1}\hat{g}_{(0,m+1)}(\gamma) \qquad &\gamma \in S_m, \\ a^{m-1}_{1,-1}\hat{g}_{(0,m-1)}(\gamma)+\hat{g}_{(0,m)}(\gamma)+a^{m-1}_{1,1}\hat{g}_{(1,m)}(\gamma) \qquad &\gamma \in S_{m-1},\\
0 \qquad& \gamma \in \R\setminus (S_m \cup S_{m-1}).
\end{cases}
\end{equation*}
Hence, if we define $\hat{\phi}_1=T_{-a^jdm}D_{a^j}\hat{h}_{(j,m)}$
and $\hat{\phi}_0=T_{-dm}\hat{h}_{(0,m)}$. Take any two numbers $b_1,
b_{-1}$ in $\R$ with $b_1+b_{-1}=2$ and set
$a^{m}_{k+1,-n}=a^{m}_{-k,n}=a^{n/2}b_n$ for all $k\in \N$ and $m\in
\Z$. We then have
 \begin{equation}
\hat{\phi}_1(\gamma)= b_{-1}\hat{\tilde{\psi}}(a\gamma)+
\hat{\tilde{\psi}}(\gamma) + b_{1}\hat{\tilde{\psi}}(a^{-1}\gamma)
\qquad \gamma\in \R .\label{eq:phi1}
\end{equation}
For $j=0$, based on Theorem~\ref{thm:GSI-dual-frames-1D-freq}, we should set $a^m_{0,-1}=a^m_{1,1}=a^{-1/2}b_{-1}$, also we set $c_{-1}=a^m_{1,-1}$ and $c_1=a^m_{0,1}$ for all $m\in \Z$, where $c_{-1},c_{1}\in \R$ and $c_{-1}+c_{1}=2$. In this case, we have
\begin{equation}
\hat{\phi}_0= b_{-1}\hat{\tilde{\psi}}(a\gamma)+
\hat{\tilde{\psi}}(\gamma) + c_1\hat{\tilde{\psi}}(\gamma-d)+
c_{-1}\hat{\tilde{\psi}}(\gamma+d) \qquad \gamma\in \R .
\label{eq:phi0}
\end{equation}
By Theorem~\ref{thm:GSI-dual-frames-1D-freq}, we conclude that the systems
\[
\{T_{bk}E_{a^jdm}\psi\}_{k,m\in\Z} \cup
\{D_{a^j}T_{bk}E_{a^jdm}\psi\}_{k,m\in\Z,j\in\N}
\]
and
\[
\{b^{1/2} \, T_{bk}E_{dm}\phi_0\}_{k,m\in\Z} \cup
\{b^{1/2} \, D_{a^j}T_{bk}E_{a^jdm}\phi_1\}_{k,m\in\Z,j\in\N}
\]
are dual wave packet frames for $L^2(\R)$. Note that in the definitions
of $\phi_0$ and $\phi_1$ in \eqref{eq:phi0} and \eqref{eq:phi1}, respectively, we are free to choose any set of coefficients satisfying
 $b_1+b_{-1}=2$ and $c_{-1}+c_{1}=2$.
\end{ex}

In Example~\ref{exa:dual-wp-frames} we constructed dual wave packet
frames with \emph{two} generators, akin to the case of scaling and
wavelet functions for non-homogeneous wavelet systems. By a special
choice of the coefficients  $b_1, b_{-1}, c_{-1}, c_{1}$, we can
reduce the number of generators to one.

\begin{ex}
Take $b_1=b_{-1}= c_{-1}= c_{1}=1$ in \eqref{eq:phi1} and
\eqref{eq:phi0}. Note that $\hat{\tilde{\psi}}(a^{-1}\gamma)$ is equal
to $\hat{\tilde{\psi}}(\gamma-d)+\hat{\tilde{\psi}}(\gamma+d)$ on the
support of $\hat{\psi}$. Thus, $\hat{\phi}_0$ and
$\hat{\phi}_1$ agree on $\supp\hat{\psi}$. Hence
 if we set
\[
\hat{\phi}(\gamma)=b\hpi(a\gamma)+b\hpi(\gamma)+b\hpi(\ga-d)+b\hpi(\ga+d),\quad
\gamma \in \R,
\]
then the wave packet systems $\{D_{a^j}T_{bk}E_{a^jdm}\psi\}_{k,m\in\Z,j\in J}$ and
$\{D_{a^j}T_{bk}E_{a^jdm}\phi\}_{k,m\in\Z,j\in J}$ are dual frames
for $L^2(\R)$ for $b<a^{-j_0}(2a^2+a-1)^{-1}$.
Alternatively, we can take
\[
\hat{\phi}(\gamma)=b\hpi(a\gamma)+b\hpi(\gamma)+b\hpi(a^{-1}\gamma),\quad
\gamma \in \R,
\]
in which case we need to take $b< a^{-j_0-2}(a+1)^{-1}$ to obtain dual frames.
\end{ex}

\noindent{\bf Acknowledgment:} Marzieh Hasannasab would like to thank Kharazmi University
for financial support during the preparation of the first draft of
this manuscript. The three authors thank the reviewers for useful suggestions that improved the presentation.

\def\cprime{$'$} \def\cprime{$'$} \def\cprime{$'$} \def\cprime{$'$}
  \def\uarc#1{\ifmmode{\lineskiplimit=0pt\oalign{$#1$\crcr
  \hidewidth\setbox0=\hbox{\lower1ex\hbox{{\rm\char"15}}}\dp0=0pt
  \box0\hidewidth}}\else{\lineskiplimit=0pt\oalign{#1\crcr
  \hidewidth\setbox0=\hbox{\lower1ex\hbox{{\rm\char"15}}}\dp0=0pt
  \box0\hidewidth}}\relax\fi} \def\cprime{$'$} \def\cprime{$'$}
  \def\cprime{$'$} \def\cprime{$'$} \def\cprime{$'$} \def\cprime{$'$}

\end{document}